\newtheorem{theorem}{Theorem}[section]
\newtheorem{lem}[theorem]{Lemma}
\newtheorem{cor}[theorem]{Corollary}
\newtheorem{prop}[theorem]{Proposition}
\newtheorem{rem}[theorem]{Remark}
\newtheorem{lemma}[theorem]{Lemma}
\newtheorem{corollary}[theorem]{Corollary}
\newtheorem{definition}[theorem]{Definition}
\newtheorem{proposition}[theorem]{Proposition}
\newcommand{\bb}{\mathbb}
\newcommand{\ca}{\mathcal}
\newcommand{\fr}{\mathfrak}
\newcommand{\Z}{\ensuremath{\mathbb{Z}}}
\newcommand{\Q}{\ensuremath{\mathbb{Q}}}
\newcommand{\C}{\ensuremath{\mathbb{C}}}
\renewcommand{\P}{\ensuremath{\mathrm{P}}}
\newcommand{\bP}{\ensuremath{\mathbb{P}}}
\renewcommand{\O}{\ensuremath{\mathcal{O}}}
\newcommand{\rO}{\ensuremath{\mathrm{O}}}
\newcommand{\cS}{\ensuremath{\mathcal{S}}}
\newcommand{\bS}{\ensuremath{\mathbb{S}}}
\newcommand{\bG}{\ensuremath{\mathbb{G}}}
\newcommand{\fX}{\ensuremath{\fr{X}}}
\newcommand{\bt}{\mathbf{t}}
\newcommand{\ra}{\rightarrow}
\newcommand{\bx}{\mathbf{x}}
\newcommand{\by}{\mathbf{y}}
\newcommand{\bp}{\mathbf{p}}
\newcommand{\git}{\mathbin{
\mathchoice{/\mkern-6mu/}
{/\mkern-6mu/}
{/\mkern-5mu/}
{/\mkern-5mu/}}}
\DeclareMathOperator{\shom}{\ca{H}om}
\DeclareMathOperator{\Sym}{\mathrm{Sym}}
\newcommand{\ev}{\mathrm{ev}}
\newcommand{\wev}{\widetilde{\ev}}
\newcommand{\vir}{\mathrm{vir}}
\newcommand{\ovir}{\ca{O}^{\mathrm{vir}}}
\newcommand{\Hom}{\operatorname{\ca{H}\! \mathit{om}}}
\newcommand{\GLr}{\mathrm{GL}_r(\bb{C})}
\newcommand{\glr}{\mathfrak{gl}_r(\bb{C})}
\newcommand{\kg}{K_{T}^0}
\newcommand{\kG}{K_0^{T}}
\newcommand{\spec}{\operatorname{Spec}}
\newcommand{\qk}{\mathrm{QK}}
\newcommand{\ud}{\underline{d}}
\newcommand{\hc}{\widehat{C}}
\newcommand{\prk}{\P_{r,k}}
\DeclareMathOperator{\cO}{\mathcal{O}}
\DeclareMathOperator{\quot}{\mathsf{Quot}}
\newcommand{\Quot}{\quot_d(\bP^1, N,r)}
\newcommand{\QuotC}{\quot_{d}(C,N,r)}
\newcommand{\QuotCB}{\fr{Quot}_{d}(\fr{C}/\fr{B},N,r)}
\newcommand{\QuotCPB}{\fr{Quot}_{d}(\fr{C}'/\fr{B},N,r)}
\newcommand{\QuotCpm}{\fr{Quot}_{d}(\fr{C}_\pm/\ca{T},N,r)}
\newcommand{\QuotCt}{\fr{Quot}_{d}(\fr{C}_\sim/\ca{T}_\sim,N,r)}
\newcommand{\md}{\ca{M}^d}
\newcommand{\mdo}{\ca{M}^d_0}
\newcommand{\mdpm}{\ca{M}^d_\pm}
\newcommand{\mdt}{\ca{M}^d_\sim}
\newcommand{\cdegn}{C_{\rm deg}[n]}
\DeclareMathOperator{\rank}{\operatorname{rank}}
\newcommand{\grass}{\mathrm{Gr}(r,N)}
\newcommand{\ms}{\overline{M}_{0,n}(X,d)}
\newcommand{\qgs}{Q^{0+}_{C,n}(X,d)}
\newcommand{\msmm}{Q^{0+,0+}_{C,m\mid n-m}(X,d)}
\newcommand{\QGS}{QG^{\epsilon=0+}_{0,1}(X,d)}
\newcommand{\epqm}{Q^{\epsilon}_{0,n}(X,d)}
\newcommand{\stableq}{Q^{\epsilon=0+}_{0,n}(X,d)}
\begin{document}
\baselineskip=15pt
	\title[Quantum $K$-invariants via Quot schemes I]{Quantum $K$-invariants via Quot schemes I
	}
	
\author[S.~Sinha]{SHUBHAM SINHA}
\address{International Centre for Theoretical Physics, Trieste}
\email{ssinha1@ictp.it}
\author[M.~Zhang]{Ming Zhang}
\address{Department of Mathematics, University of California, San Diego}
\email{miz017@ucsd.edu}

\maketitle

\begin{abstract}
We study the virtual Euler characteristics of sheaves over Quot schemes of curves,
establishing that these invariants fit into a topological quantum field theory (TQFT) valued in $\mathbb{Z}[[q]]$. We show that the three-pointed genus-zero $K$-theoretic stable map invariants of the Grassmannian coincide with the genus-zero $K$-theoretic invariants defined via the Quot scheme. Utilizing Quot scheme compactifications alongside the TQFT framework,
we derive presentations of the small quantum $K$-ring of the Grassmannian. Our approach offers a new method for finding explicit formulas for quantum $K$-invariants.
\end{abstract}

\section{Introduction}

\subsection{Overview}
Quantum $K$-theory was introduced by Givental~\cite{WDVV} and Lee
\cite{Lee} as a $K$-theoretic generalization of Gromov--Witten theory.
Let $X$ be a projective manifold.
Quantum $K$-invariants (or sometimes referred to as the $K$-theoretic Gromov--Witten invariants)
are defined as Euler characteristics of $K$-theory classes on the Kontsevich's moduli spaces of stable maps to $X$.
Using 2-pointed and 3-pointed, genus zero quantum $K$-invariants of $X$, one can define an associative product, called the quantum product,
on the $K$-group of $X$.
The resulting algebra is called the (small) quantum $K$-ring.

Let $\grass$ be the Grassmann variety of $r$-planes in $\C^{N}$.
The small quantum $K$-ring of Grassmannians was first studied by Buch and Mihalcea in~\cite{Buch-Mihalcea}. Buch--Mihalcea's results rely on the analysis of the geometry of the Kontsevich moduli space $\overline{M}_{0,3}(\grass,d)$, which compactifies the moduli space $\mathrm{Mor}_{d}(\bP^{1},\grass)$ of degree-$d$ morphisms from $\bP^{1}$ to $\grass$.
Note that the moduli space $\mathrm{Mor}_{d}(\bP^{1},\grass)$ has a simpler compactification, the Grothendieck’s Quot
scheme~\cite{Grothenieck1}.
In the early study of \emph{quantum cohomology} of Grassmannians, many results have been obtained by using the Quot scheme compactification~\cite{Bertram3, Bertram2,BDW}.

In this paper, we show that virtual Euler characteristics over Quot schemes of any genus $g$ curve form a $\bb{Z}[[q]]$-valued $(1+1) $-Topological Quantum Field Theory (TQFT). Additionally, we establish that the quantum $K$-ring
$\qk (\grass)$ can be fully reconstructed using the Quot schemes of $\bb{P}^1$.
In particular, our approach provides a new way to derive a ring presentation for $\qk(\grass)$. We observe that fundamental properties of $\qk(\grass)$, such as finiteness and $S_3$-symmetry
of the structure constants, are geometrically elucidated using this approach. 

In part II \cite{SinhaZhang2},
we establish explicit formulas for the Euler characteristics over Quot schemes of curves,
similar to the Vafa--Intriligator formula for cohomological invariants.
For the genus zero case, we employ torus localization for calculation, inspired by ~\cite{OpreaShubham} in $K$-theory (see also~\cite{Marian-Oprea} for cohomology calculations).
For higher-genus cases, we utilize the TQFT structure developed here. 
In this paper, some results critically depend on the vanishing results in Theorem~\ref{thm:vanishing} from~\cite{SinhaZhang2}.

\subsection{Quantum $K$-ring of the Grassmannian}
Throughout this paper, we set $X=\grass$ and $k=N-r$. Let
$
\P_{r,k}
$ be the set of partitions contained in the $r\times k$ rectangular partition. The complement partition for $\lambda=(\lambda_{1},\dots,\lambda_r)\in \P_{r,k}$ is given by $\lambda^{*}=(k-\lambda_r,\dots,k-\lambda_1)$. 

Let $K(X)$ be the Grothendieck group of coherent sheaves (or, equivalently,
vector bundles since $X$ is smooth).
For any partition $\lambda\in\P_{r,k}$, let $X_{\lambda}$ denote the Schubert cycle for a fixed complete flag of $\mathbb{C}^N$
(see Section~\ref{sec:K-theory} for precise definitions).
The $K$-theory classes of the structure sheaves of these Schubert cycles
$\cO_\lambda:=[\cO_{X_\lambda}]$ form a $\mathbb{Z}$-linear basis for $K(X)$. The structure constants $c_{\lambda,\mu}^{\nu}$ for the product $$\cO_\lambda\cdot \cO_\mu =\sum_{\nu\in \P_{r,k}}^{} c_{\lambda,\mu}^{\nu}\cO_{\nu}$$
in $K(X)$ are given by the $K$-theoretic Littlewood-Richardson rule in~\cite{Buch},
which involves counting set-valued tableaux.

The $K$-group of Grassmannian admits another natural basis consisting of Schur bundles. Let $S\subset \ca{O}_{X}\otimes \bb{C}^N$ be the tautological subbundle over $X$.
For any partition $\lambda$, we have a corresponding vector bundle $\bb{S}^{\lambda}(S)$, where $\bb{S}^{\lambda}$ is the Schur functor associated to $\lambda$. For example,
$\bb{S}^{(m)}(S) = \Sym^m S $ and $\bb{S}^{(1)^m}(S) = \wedge^m S $ where $(1)^m = (1,\dots,1)$.
Let $\rm{A}=({\rm{A}}_{\lambda}^\nu)_{\lambda,\nu\in\P_{r,k}}$ be the transition matrix between the Schur
basis $\{\bb{S}^{\lambda}(S): \lambda\in \P_{r,k} \}$ and the Schubert basis $\{\ca{O}_\nu :\nu \in \P_{r,k} \}$:
\begin{equation}\label{eq:change_of_basis_Grass}
\ca{O}_\lambda=\sum_{\nu\in \P_{r,k}}^{}{\rm{A}}_{\lambda}^\nu \ \bb{S}^{\nu}(S)
\end{equation}
for any $\lambda\in \P_{r,k}$.

 Next we recall the definition of Givental--Lee's quantum $K$-invariants over $X$. Let $\ms$ be the moduli stack of degree-$d$, genus zero, $n$-pointed stable maps into $X$. Let $\ev_{i}:\ms\rightarrow X$ denote the evaluation map at the $i$-th marking for $1\leq i \leq n$. Then the quantum $K$-invariants are defined by
\begin{equation}
	\label{eq:intro-quantum-k}
\langle
\phi_{1},
\dots,
\phi_{n}
\rangle_{0,n,d}
:=\chi
\bigg(
\ms,
\prod_{i=1}^{n}\ev_{i}^{*}(\phi_{i})
\bigg),
\end{equation}
where $\phi_{1},\dots,\phi_{n}$ are $K$-theory classes on $X$. For partitions $\alpha,\beta,\gamma\in \P_{r,k}$, we define
\begin{equation}\label{eq:def_QK-invariats_structure_sheaves}
F_{\alpha,\beta}:=\sum_{d\ge 0}^{}q^d\langle
\cO_{\alpha},
\cO_{\beta}
\rangle_{0,2,d}\quad \text{and}\quad
F_{\alpha,\beta,\gamma}:
=
\sum_{d\geq0}
q^{d}
\langle
\cO_{\alpha},
\cO_{\beta},
\cO_{\gamma}
\rangle_{0,3,d},
\end{equation}
where $q$ is the Novikov variable.
Here, $(F_{\alpha,\beta})_{\alpha,\beta\in \P_{r,k}}$ is referred to as the \emph{quantized pairing} matrix.
Let $F^{\alpha,\beta}$ denote the entries of its inverse.
The \emph{quantum $K$-ring} of $X$, denoted by $\qk(X)$, is the $\Z[[q]]$-module $K(X)\otimes_{\Z}\Z[[q]]$ equipped with the \emph{quantum $K$-product}
defined by
\begin{equation*}
	\cO_{\lambda}
	\bullet
	\cO_{\mu}
	=\sum_{\nu\in\P_{r,k}}
	N_{\lambda,\mu}^{\nu}
	\cO_{\nu},
\end{equation*}
where the structure constants $N_{\lambda,\mu}^{\nu}
$ are given by
\begin{equation}
	\label
	{eq:intro-structure-constants}
	N_{\lambda,\mu}^{\nu}
	:=
	\sum_{\alpha\in\P_{r,k}}
	F^{\nu,\alpha}
	F_{\alpha,\lambda,\mu}
	.
\end{equation}

According to the general results in~\cite{WDVV, Lee}, the quantum $K$-product is commutative, associative, and a deformation of the tensor product.
Furthermore, the  structure constants $N_{\lambda,\mu}^{\nu}$ are polynomials in $q$ (see \cite{Buch-Mihalcea}),
a property commonly known as the finiteness of the quantum $K$-ring.

\subsection{Quot schemes}
\label{subsec:quot-scheme}

Fix a smooth projective curve $C$ of genus $g$. Let $\quot_d(C,N,r)$ be the Quot scheme
parametrizing
short exact sequences $$0\rightarrow S\rightarrow \O_{C}^{\oplus N}
\rightarrow Q\rightarrow 0$$
of coherent sheaves on $C$, where $S$ has rank $r$ and degree $-d$.
When $C=\bb{P}^1$, the Quot scheme $\Quot$, is a smooth projective scheme and isomorphic to $X$ when $d=0$.
Str\o mme~\cite{Stromme} studied geometric aspects of $\Quot$ using torus localization. In higher genus, $\quot_d(C,N,r)$ is used to compactify the space $Mor_d(C,X)$ of degree-$d$ morphisms from $C$ to $X$ and to calculate intersection numbers (see~\cite{BDW,Bertram2,Bertram3})).
In~\cite{Marian-Oprea}, Marian and Oprea defined virtual fundamental classes for Quot schemes on curves of arbitrary genus and calculated virtual intersection numbers.

Let $\pi_C$ and $\pi$ be the projections from $\quot_d(C,N,r)\times C$ to $C$ and $\quot_d(C,N,r)$, respectively.
There exists a universal short exact sequence 
\[
0\rightarrow
\ca{S}
\rightarrow
\pi_C^*\O_{C}^{\oplus N}
\rightarrow
\ca{Q}
\rightarrow 
0.
\]
of coherent sheaves over $\quot_d(C,N,r)\times C$. For any $p\in C$, we denote by $\ca{S}_{p}$ the restriction of the universal subbundle $\ca{S}$ to $\quot_{d}(C,N,r)\times \{p\}$.

\begin{definition}\label{def:structure_sheaf_quot}
	For any partition $\lambda\in \P_{r,k}$, we define the $K$-theory class
	\[\widetilde{\ca{O}}_{\lambda} = \sum_{\nu\in \P_{r,k}} {\rm{A}}_{\lambda}^{\nu}\  \bS^{\nu}(\ca{S}_p)
	\]
	on $\QuotC$, where $\rm{A}_{\lambda}^{\nu}$ are the coefficients of the transition matrix in~\eqref{eq:change_of_basis_Grass} and $\bb{S}^{\nu}(\ca{S}_p)$ are the Schur functors applied to the vector bundle $\ca{S}_p$. Note that the point $p$ is suppressed from the notation of $\widetilde{\ca{O}}_{\lambda}$. We define $$\widetilde{\ca{O}}_{\nu}^*:= \widetilde{\ca{O}}_{\nu^*} \cdot \det(\ca{S}_p),$$ where $\nu^*$ is the complement partition in the $r\times k$ rectangle.
\end{definition}

 \begin{theorem}\label{thm:Intro_structure_constants}
	For any partitions $\lambda, \mu,\nu\in \P_{r,k}$, the structure constants for the product in the quantum $K$-ring $\qk(\grass)$ satisfy:
	\begin{equation*}
		N_{\lambda,\mu}^{\nu} = \sum_{d\ge0}^{}q^d\chi(\Quot, \widetilde{\ca{O}}_{\lambda}\cdot \widetilde{\ca{O}}_{\mu}\cdot \widetilde{\ca{O}}_{\nu}^*).
	\end{equation*}
\end{theorem}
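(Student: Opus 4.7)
The plan is to express the right-hand side as a Frobenius-algebra combination of Quot $2$- and $3$-point invariants, and then match those against the Kontsevich counterparts appearing in~\eqref{eq:intro-structure-constants}. Set
\[
G_{\alpha,\beta} := \sum_{d \ge 0} q^d \, \chi(\Quot, \widetilde{\mathcal{O}}_\alpha \widetilde{\mathcal{O}}_\beta), \qquad G_{\alpha,\beta,\gamma} := \sum_{d \ge 0} q^d \, \chi(\Quot, \widetilde{\mathcal{O}}_\alpha \widetilde{\mathcal{O}}_\beta \widetilde{\mathcal{O}}_\gamma).
\]

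First I would prove the dual-basis identity
\[
\sum_{d \ge 0} q^d \, \chi(\Quot, \widetilde{\mathcal{O}}_\alpha \widetilde{\mathcal{O}}_\nu^*) = \delta_{\alpha,\nu}.
\]
For $d = 0$ the Quot scheme is $X$ itself, the insertions restrict to $\mathcal{O}_\alpha$ and $\mathcal{O}_{\nu^*} \otimes \det S$, and the identity reduces to $K$-theoretic Poincar\'e duality on the Grassmannian. For $d \ge 1$ each term vanishes by Theorem~\ref{thm:vanishing} of part~II, applied to the Schur-functor expansion of $\widetilde{\mathcal{O}}_\alpha \widetilde{\mathcal{O}}_\nu^*$; the twist by $\det \mathcal{S}_p$ in the definition of $\widetilde{\mathcal{O}}_\nu^*$ is exactly what places the product in the vanishing range of the theorem.

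Second, I would establish the Quot--Kontsevich comparison $G_{\alpha,\beta} = F_{\alpha,\beta}$ and $G_{\alpha,\beta,\gamma} = F_{\alpha,\beta,\gamma}$. The insertions $\widetilde{\mathcal{O}}_\lambda$ are engineered, via the transition matrix~\eqref{eq:change_of_basis_Grass} and Definition~\ref{def:structure_sheaf_quot}, to match evaluation pullbacks of $\mathcal{O}_\lambda$ on $\grass$, so this comparison amounts to relating virtual $K$-theoretic invariants on the two compactifications $\Quot$ and $\ms$ of $\mathrm{Mor}_d(\bP^1, \grass)$. The natural tool is $\epsilon$-stable quasimap theory: $\Quot$ corresponds to the chamber $\epsilon = 0+$ and $\ms$ to $\epsilon = \infty$, and the equality of invariants is a $K$-theoretic wall-crossing statement. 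Putting the two steps together, the first implies that $\widetilde{\mathcal{O}}_\nu^*$ is dual to $\widetilde{\mathcal{O}}_\nu$ under the pairing $G$, and the Frobenius-algebra/TQFT structure established earlier in the paper then lets one expand $\widetilde{\mathcal{O}}_\nu^*$ into the $\{\widetilde{\mathcal{O}}_\alpha\}$-basis inside any $3$-point Euler characteristic, yielding
\[
\sum_{d \ge 0} q^d \, \chi(\Quot, \widetilde{\mathcal{O}}_\lambda \widetilde{\mathcal{O}}_\mu \widetilde{\mathcal{O}}_\nu^*) = \sum_{\alpha \in \P_{r,k}} (G^{-1})^{\nu,\alpha} G_{\lambda,\mu,\alpha} = \sum_{\alpha \in \P_{r,k}} F^{\nu,\alpha} F_{\lambda,\mu,\alpha} = N_{\lambda,\mu}^{\nu},
\]
by~\eqref{eq:intro-structure-constants}.

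The main obstacle is the Quot--Kontsevich comparison: the boundary strata of the two compactifications are genuinely different, and $K$-theoretic wall-crossing typically involves nontrivial correction terms reminiscent of the $J$-function. Showing that for the specific insertions $\widetilde{\mathcal{O}}_\lambda$ these corrections collapse to give clean equality at the $2$- and $3$-point level is the most delicate part of the argument; a master-space construction carrying both virtual $K$-theoretic structures simultaneously is a plausible alternative route.
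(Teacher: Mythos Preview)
Your approach is correct and matches the paper's proof: orthogonality via the vanishing theorem (the paper's Proposition~\ref{prop:quantized-dual-basis}), the wall-crossing comparison $G=F$ (Corollary~\ref{thm:3-point_invariants_QK_equals_Quot}), and a gluing step to factor the 3-point invariant with insertion $\widetilde{\ca{O}}_\nu^*$ through the basis $\{\widetilde{\ca{O}}_\alpha\}$. Your identification of the wall-crossing as the main obstacle is also accurate; the paper handles it in two stages ($\epsilon$-wall-crossing and light-to-heavy), showing the correction terms vanish via an explicit $I$-function computation.

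One point of precision: the factorization step you attribute to ``the Frobenius-algebra/TQFT structure'' is not a formal consequence of orthogonality, because $\widetilde{\ca{O}}_\nu^*$ does \emph{not} lie in the span of $\{\widetilde{\ca{O}}_\alpha:\alpha\in\P_{r,k}\}$ as a $K$-theory class on $\fr{X}$ (see Remark~\ref{rem:Quantum_dual}), so one cannot simply substitute a basis expansion inside the 3-point Euler characteristic. The paper uses the degeneration formula (Theorem~\ref{thm:intro_degeneration_formula}) directly for this step, and the TQFT statement (Theorem~\ref{thm:Intro_TQFT}) is itself derived from the degeneration formula and comes logically after Theorem~\ref{thm:Intro_structure_constants}. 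So you should cite the degeneration formula rather than the TQFT; otherwise the argument is as you describe.
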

The above theorem provides a more insightful description of the structure constants. For example, Theorem~\ref{thm:Intro_structure_constants} immediately shows the $S_3$-symmetry $N_{\lambda,\mu}^{\nu} = N_{\lambda,\nu^*}^{\mu^*}$.
This symmetry was established for $q=0$ (ordinary $K$-theory) in~\cite{Vakil} and generally in~\cite[Theorem 5.12]{Buch-Mihalcea}, which is not evident from the original definition~\eqref{eq:intro-structure-constants}.
Furthermore, the vanishing results of the Euler characteristics of Schur bundles $\bb{S}^{\lambda}(\ca{S}_p)$ over high-degree Quot schemes
imply that $N_{\lambda\mu}^{\nu}$ is a polynomial in $q$ of degree at most $\min\{r,N-r\}$ (see Corollary~\ref{cor:finiteness-quantum-product}).

\subsection{Topological Quantum Field Theory}
The Quot scheme $\quot_d(C,N,r)$ may not be smooth when $C\neq \mathbb{P}^1$.
A two-term perfect obstruction theory for $\quot_d(C,N,r)$ was constructed
in~\cite{Marian-Oprea}, yielding a virtual fundamental class via \cite{LiTian, Behrend-Fantechi}
and a virtual structure sheaf through \cite{Lee}.
When $d \gg 0$, the Quot scheme is an irreducible scheme of
the expected dimension (see \cite{BDW,PopaRoth}),
and the virtual fundamental class and the virtual structure sheaf, denoted $\mathcal{O}_{\quot}^{\text{vir}}$,
equal the ordinary fundamental class and structure sheaf $\mathcal{O}_{\quot}$, respectively. 

For any $K$-theory class $\phi$ on $\quot_d(C,N,r)$, we define the virtual Euler characteristic by
\[
\chi^{\vir}\big(\quot_d(C,N,r), \phi\big) := \chi\big(\quot_d(C,N,r), \mathcal{O}_{\quot}^{\text{vir}}\cdot \phi\big).
 \]
\begin{definition}\label{defn:intro-TQFT-invariants}
For any tuples of partitions $\underline{\lambda}=( \lambda^1,\dots ,\lambda^{s})$ and $\underline{\nu}=(\nu^1,\dots ,\nu^{t})$ in $\P_{r,k}$, we define the power series
\[
N(g)_{\underline{\lambda}}^{\underline{\nu}} := \sum_{d\ge 0}^{} q^d \chi^{\vir}(\quot_d(C,N,r),\widetilde{\ca{O}}_{\lambda^1}\cdots \widetilde{\ca{O}}_{\lambda^{s}}\cdot \widetilde{\ca{O}}_{\nu^1}^* \cdots\widetilde{\ca{O}}_{\nu^t}^* ),
\]
where $\widetilde{\ca{O}}_\lambda$ and $\widetilde{\ca{O}}_\lambda^*$ are defined in Definition~\ref{def:structure_sheaf_quot}. Note that $N_{\lambda,\mu}^{\nu}=N(0)_{\lambda,\mu}^{\nu}$ by Theorem~\ref{thm:Intro_structure_constants}.
\end{definition}

Next, we define a Topological Quantum Field Theory (TQFT) valued in the power series ring
$\mathbf{R}=\bb{C}[[q]]$, as considered in~\cite{BryanPandharipande}.
Let $\mathbf{2Cob}$ be the catergory where objects are disjoint union of circles
and morphisms are diffeomorphism classes of cobordisms.
The composition of morphisms is given by the concatenation of cobordisms.
\begin{definition}
	A (1+1)-TQFT with coefficients in $\mathbf{R}$ is a symmetric monoidal functor
	$$F:\mathbf{2Cob}\to \mathbf{Mod_{R}}, $$
	where $\mathbf{Mod_{R}}$ is the category of $\mathbf{R}$-modules. This data is equivalent to:
	\begin{itemize}
		\item[(i)] Let $M$ be an $\mathbf{R}$-module associated to a circle, then $F$ associates $M^{\otimes s}$ to the disjoint union of $s$ circles.
		\item[(ii)] The value of $F$ on the trivial cobordism from a circle to a circle is associated to the identity map on $M$.
		\item[(iii)] The concatenation cobordisms correspond to the composition in $\mathbf{Mod_{R}}$.
	\end{itemize}
\end{definition}
Consider the free $\bb{C}[[q]]$-module $M=K(X)\otimes \bb{C}[[q]]$, with the basis $\{\ca{O}_{\lambda}: \lambda\in\P_{r,k} \}$.
For a genus $g$ cobordism from $s$ circles to $t$ circles,
we define the morphism $M^{\otimes s}\to M^{\otimes t}$ using the matrix given by:
\[
\cO_{\lambda^1}\otimes \cdots \otimes\cO_{\lambda^{s}} \to \sum_{\underline{\nu}}^{} N(g)_{\underline{\lambda}}^{\underline{\nu}}\cdot  \cO_{\nu^1}\otimes \cdots \otimes \cO_{\nu^{t}}.
\]
\begin{theorem}\label{thm:Intro_TQFT}
	The invariants $N(g)_{\underline{\lambda}}^{\underline{\nu}}$ define a (1+1)-TQFT with values in the power series ring $\bb{C}[[q]]$. In particular, we prove the gluing formula
		$$\sum_{\underline{\mu}}^{}N(g_1)_{\underline{\lambda}}^{\underline{\mu}}\cdot N(g_2)_{\underline{\mu}}^{\underline{\nu}}= N(g+t-1)_{\underline{\lambda}}^{\underline{\nu}},$$
		where $g=g_1+g_2$ and the sum runs over all $t$-tuples, $\underline{\mu}= (\mu^1,\cdots,\mu^t)$, of partitions in $\P_{r,k}$. Furthermore, we prove the orthogonality relation $$N(0)_{\lambda}^{\nu} =\sum_{d\ge 0}^{}q^d\chi(\Quot,\widetilde{\ca{O}}_\lambda\cdot \widetilde{\ca{O}}_\nu^*  ) =  \delta_{\lambda,\nu}.$$

	\[
\underline{\lambda}
\begin{tikzpicture}[baseline=25pt, thick]
\pic[
tqft,rotate=90,
cobordism height=110pt,boundary separation=30pt,
cobordism edge/.style={draw},
between outgoing 2 and 3/.style={dotted},
outgoing upper boundary component 2/.style={dotted},
outgoing lower boundary component 2/.style={dotted},
outgoing upper boundary component 1/.style={dotted},
outgoing lower boundary component 1/.style={dotted},
incoming boundary components=3,
outgoing boundary components=2,offset=.5,
every upper boundary component/.style={draw},
every lower boundary component/.style={draw,thin},
genus=1,
hole 1/.style={rotate=-90,shift={(30pt,90pt)}},
hole 2/.style={rotate=-90,shift={(10pt,70pt)}},
hole 3/.style={rotate=-90,shift={(-30pt,130pt)}},]{};
\end{tikzpicture}
\underline{\mu}
\begin{tikzpicture}[baseline=10pt, thick]
\pic[
tqft,rotate=90,
cobordism height=110pt,boundary separation=30pt,
cobordism edge/.style={draw},
incoming upper boundary component 1/.style={dotted},
incoming lower boundary component 1/.style={dotted},
incoming upper boundary component 2/.style={dotted},
incoming lower boundary component 2/.style={dotted},
incoming boundary 3/.style={dotted},
incoming boundary components=2,
outgoing boundary components=3,offset=-0.5,
every upper boundary component/.style={draw},
every lower boundary component/.style={draw,thin},
genus=2,
hole 1/.style={rotate=-90,shift={(50pt, 50pt)}},
hole 2/.style={rotate=-90,shift={(30pt,80pt)}},
hole 3/.style={rotate=-90,shift={(-30pt,130pt)}},]{}
;
\end{tikzpicture}
\underline{\nu}
\]
\end{theorem}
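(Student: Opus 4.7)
The plan is to reduce the theorem to a local one-node gluing statement together with a base case, following the standard recipe for establishing $(1+1)$-TQFT structures via degeneration of curves. By the classification of $(1+1)$-TQFTs as commutative Frobenius algebras, every oriented cobordism factors as a composition of elementary cobordisms (caps, pairs of pants, and cylinders), so it suffices to verify the orthogonality relation $N(0)_\lambda^\nu=\delta_{\lambda,\nu}$ --- which supplies a non-degenerate pairing and identifies the trivial cylinder with the identity --- together with a single-node gluing formula guaranteeing that the assignment is well-defined under subdivision along any essential circle. Iterating the one-node version along $t$ parallel circles, and combining separating with non-separating gluings, then yields the general statement.

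For the orthogonality relation, I would separate the $d=0$ and $d>0$ contributions. When $d=0$, the Quot scheme $\quot_0(\bP^1,N,r)$ equals $X$; by Definition~\ref{def:structure_sheaf_quot} combined with~\eqref{eq:change_of_basis_Grass}, the classes $\widetilde{\ca{O}}_\lambda$ and $\widetilde{\ca{O}}_\nu^*$ restrict to $\ca{O}_\lambda$ and $\ca{O}_{\nu^*}\cdot\det(S)$ respectively, so the $d=0$ contribution reduces to the classical $K$-theoretic Poincar\'e duality $\chi(X,\ca{O}_\lambda\cdot\ca{O}_{\nu^*}\cdot\det(S))=\delta_{\lambda,\nu}$ of Schubert classes on the Grassmannian. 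For $d>0$, the relevant Euler characteristics vanish by Theorem~\ref{thm:vanishing} from~\cite{SinhaZhang2}, which supplies the required vanishing for the Schur-bundle expressions building $\widetilde{\ca{O}}_\lambda\cdot\widetilde{\ca{O}}_\nu^*$ over $\Quot$ when $d\geq 1$.

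For the gluing formula, I would degenerate a smooth curve of genus $g=g_1+g_2$ to a nodal curve $C_0=C_1\cup_p C_2$ in a one-parameter family $\fr{C}\to\ca{T}$, and introduce the relative Quot stack $\fr{Quot}_d(\fr{C}/\ca{T},N,r)$. Extending Marian--Oprea's construction to this relative setting, I would equip the stack with a perfect obstruction theory whose virtual structure sheaf specializes correctly along $\ca{T}$, so that virtual Euler characteristics on the smooth general fiber agree with those on $C_0$. The key identification is that quotients on $C_0$ are pairs of compatible quotients on $C_1$ and $C_2$ agreeing at $p$, producing a fiber-product decomposition of the Quot stack of $C_0$ over $X$ via evaluation at the node. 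In $K$-theory this introduces a diagonal factor which, by the orthogonality relation proved above, expands as
\[
[\Delta_X]=\sum_{\mu\in\P_{r,k}}\widetilde{\ca{O}}_\mu\boxtimes\widetilde{\ca{O}}_\mu^*,
\]
and pushing this class across the two evaluation maps produces precisely the sum $\sum_{\underline{\mu}}N(g_1)_{\underline{\lambda}}^{\underline{\mu}}\,N(g_2)_{\underline{\mu}}^{\underline{\nu}}$. The tuple version follows by iterating along $t$ independent nodes, and the self-gluing case (which decreases $g$ by $1$ through a non-separating node) is treated identically.

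The main obstacle will be the geometric setup around the nodal fiber. On a nodal curve, coherent sheaves can acquire torsion at the node, so the relative Quot functor and its perfect obstruction theory must be analyzed carefully --- for instance by passing to a compactification via stable quotients, or by imposing locally-free conditions at the node and tracking the boundary correction. More seriously, one must verify that the virtual structure sheaf over the nodal fiber decomposes compatibly through the diagonal $\Delta\subset X\times X$; this virtual splitting at a node, analogous to the splitting axiom in Gromov--Witten theory and requiring a derived-style diagonal restriction, is the technical heart of the argument, and once secured the Künneth formula together with the orthogonality relation above assembles everything into the claimed gluing identity.
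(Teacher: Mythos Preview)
Your treatment of the orthogonality relation matches the paper's: the $d=0$ contribution is classical Poincar\'e duality on $X$, and the $d>0$ terms vanish by Theorem~\ref{thm:vanishing}. The gap is in the gluing argument.

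You propose that the nodal fiber of the relative Quot scheme decomposes as a fiber product over $X$, so that the virtual structure sheaf picks up a single diagonal factor $[\Delta_X]=\sum_{\mu}\widetilde{\ca O}_\mu\boxtimes\widetilde{\ca O}_\mu^*$. This is not what happens. To obtain a proper moduli space with a well-behaved obstruction theory over the nodal fiber, one must work with the \emph{good degeneration} of Quot schemes in the sense of Li--Wu, whose central fiber is built from expanded degenerations: chains of rational bridges are inserted at the node, and the stability condition forces positive degree on each bridge. The virtual normalization of the central fiber is then an alternating sum over all bridge lengths, and the bridge contributions are \emph{not} negligible boundary corrections. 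They assemble, via the Givental--Lee mechanism, into the inverse of the \emph{quantized} pairing $F^{\alpha,\beta}$, not the classical $g^{\alpha,\beta}$. So the raw degeneration formula reads
\[
\langle\!\langle \mathrm{V},\mathrm{W}\rangle\!\rangle^{\quot}_g
=\sum_{\alpha,\beta}\langle\!\langle \mathrm{V},\mathrm{O}_\alpha\rangle\!\rangle^{\quot}_{g_1}\,F^{\alpha,\beta}\,\langle\!\langle \mathrm{O}_\beta,\mathrm{W}\rangle\!\rangle^{\quot}_{g_2},
\]
with the pieces at $C_\pm$ being relative (one-heavy-marking) quasimap spaces rather than plain Quot schemes. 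Two further nontrivial steps are needed to reach your claimed form: a light-to-heavy wall-crossing to identify those relative invariants with Quot scheme invariants, and the identification $F^{\alpha,\beta}=\langle\!\langle \mathrm{O}_\alpha^*,\mathrm{O}_\beta^*\rangle\!\rangle^{\quot}_0$, which itself requires feeding the degeneration formula back through the orthogonality relation. Only after both of these does $F^{\alpha,\beta}$ get absorbed into the quantized dual $\mathrm{O}_\mu^*$, yielding the clean $\sum_\mu N(g_1)^{\mu}_{\cdots}\,N(g_2)_{\mu}^{\cdots}$ you asserted directly.

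In short, you correctly anticipated that the nodal geometry is the hard part, but you underestimated the correction: it is not a boundary term to be tracked and discarded, but the entire quantized pairing, and its eventual disappearance into $\widetilde{\ca O}_\mu^*$ is a genuine theorem rather than a K\"unneth formality.
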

 We shall see later that the above TQFT matches with the quantum-$K$ TQFT for the Grassmannian. The novelty here is that all the invariants are expressed as (virtual) Euler characteristics over Quot schemes. 
\begin{rem}
	The cohomological Quot scheme invariants also exhibit a TQFT structure with $\bb{C}$-coefficients, called Grassmann/$GL$-Verlinde TQFT in \cite{marian}, which differs from the standard TQFT coming from the quantum cohomology of the Grassmannian. Their proof relies on the fact that the cohomological invariants are enumerative in the large degree regime, which is not applicable in the $K$-theoretic setting.
\end{rem}

\subsection{$K$-theoretic invariants of Quot scheme}
To present the next result, we will introduce notation that is basis-independent for defining invariants over the Quot scheme.

The Quot scheme $\quot_d(C,N,r)$ compactifies $\mathrm{Mor}_d(C,X)$, the space of degree $d$ morphisms from $C$ to $X=\grass$. For any point $p\in C$, there is an evaluation morphims from $\mathrm{Mor}_d(C,\grass)$ to $X$, however this evaluation map does not extend to the Quot scheme. Viewing $\quot_d(C,N,r)$ as a stable quasimap space, see \cite{CKM} for example, the Quot scheme admits an evaluation map $\wev_p$ to a quotient stack $\fr{X}=[M_{r\times N}/\text{GL}_r(\bb{C})]$ satisfying the commutative diagram
\[
\begin{tikzcd}
	\mathrm{Mor}_d(C,X) \arrow[r
	 ,"\ev_p"
	] \arrow[d,hookrightarrow] & X \arrow[d,hookrightarrow] \\
	\quot_d(C,N,r) \arrow[r,"\wev_p"] & \fr{X}
\end{tikzcd}
\]
where $M_{r\times N}$ is the affine space of $r\times N$ matrices, and $X=\grass$ is the quotient space [$M_{r\times N}^{ss}/
\GLr]$ of matrices of full rank. We will explain this `stacky' evaluation map in Section~\ref{sec:K-theory}. Note that the $K$-theory $K^0(\fr{X}) \cong K_{\GLr}^0(M_{r \times N})$ of the stack $\fr{X}$ is isomorphic to the representation ring $R(\GLr)$ of rational $\GLr$-representations. Given a $\GLr$-representation $\mathrm{V}$, we denote its associated $K$-theory class on $\fr{X}$ by the same letter $\mathrm{V} \in K(\fr{X})$. An element $\mathrm{V} \in K(\fr{X})$ restricts to an element $V \in K(X)$.
\begin{definition}
	\label{def:intro-K-theoretic-quot}
	For $\mathrm{V}_1,\dots,\mathrm{V}_n\in K(\fX)$ and points $p_1,\dots,p_n\in C$, we define the $K$-theoretic Quot scheme invariant by
	\[
	\langle
	\mathrm{V}_{1},
	\dots,
	\mathrm{V}_{n}
	\rangle^{\quot}_{g,d}
	:=\chi^{\vir}
	\bigg(
	\quot_d(C,N,r),
	\prod_{i=1}^{n}
	\wev_{p_i}^{*}
	\left(
	\mathrm{V}_{i}
	\right)
	\bigg).
	\]
	We define the corresponding generating series by
	\[\langle\!\langle
	\mathrm{V}_{1},
	\dots,
	\mathrm{V}_{n}
	\rangle\!\rangle^{\quot}_{g} =\sum_{d\ge0}^{}q^d\langle
	\mathrm{V}_{1},
	\dots,
	\mathrm{V}_{n}
	\rangle^{\quot}_{g,d}. \]
\end{definition}
\begin{rem}
	The virtual invariant in Definition~\ref{def:intro-K-theoretic-quot} is independent of the choice of points $p_1, \dots, p_n \in C$ and of the complex structure of the genus $g$ curve $C$. This follows from the virtual Riemann--Roch theorem \cite[Theorem 3.3]{Barbara_Lothar}, which expresses these virtual Euler characteristics as integrals of cohomology classes over the virtual fundamental class (see also \cite{Marian-Oprea}).
\end{rem}
Let $\mathrm{S}\in K(\fr{X})$  be the element corresponding to the dual of the standard representation $(\C^{r})^{\vee}$. Note that $\mathrm{S}$ restricts to the universal subbundle $S$ over $X=\grass$. For $\lambda\in \prk$, we define the `stacky' structure sheaf $\mathrm{O}_\lambda$ in $K(\fr{X})$ by substituting $\ca{S}_p$ with $\mathrm{S}$
in Definition~\ref{def:structure_sheaf_quot}.
In the representation ring $R(\GLr)$, $\mathrm{O}_\lambda$ is given by $\bG^{\lambda}(\bb{C}^r)$,
which is the Grothendieck functor applied to the standard represntation~\cite[\textsection{8}]{Lenart} (see Section~\ref{sec:K-theory} for details). Note that the stacky pullback satisfies $\wev_{p}(\mathrm{S})= \ca{S}_p$. With this notation, the invariants in Definition \ref{defn:intro-TQFT-invariants} equal
\[
N(g)_{\underline{\lambda}}^{\underline{\nu}} =	\langle\!\langle
\mathrm{O}_{\lambda^1},
\dots,
\mathrm{O}_{\lambda^s},\mathrm{O}_{\nu^1}^*,\dots,\mathrm{O}_{\nu^t}^*
\rangle\!\rangle^{\quot}_{g},
\]
where $\mathrm{O}_{\nu}^*=\mathrm{O}_{\nu^*}\cdot \det(\mathrm{S}) \in K(\fr{X})$ is referred to as the \emph{quantized $K$-theoretic dual} in this paper.
\subsection{Quantum reduction map}
Let $\qk(X)$ be the quantum $K$-ring of the Grassmannian $X=\grass$. 
To better understand the quantum $K$-ring, we consider the following:
\begin{definition}
	The quantum reduction map $\kappa: 
	R(\GLr)
	\otimes
	\bb{Z}[[q]]
	\ra
	\qk(X)$ is defined to be the $\bb{Z}[[q]]$-linear extension of the formula
	\begin{equation}
		\label
		{eq:kappa-map}
		\begin{aligned}
			\kappa(\mathrm{V})=
			\sum_{\alpha\in\P_{r,k}}
			\langle\!\langle
			\mathrm{V}
			,
			\mathrm{O}^*_{\alpha}
			\rangle\!\rangle^{\quot}_0
			\ca{O}_{\alpha}
		\end{aligned},
	\end{equation}
	where $\mathrm{V}\in R(\GLr)\cong K(\fr{X})$. 
\end{definition}
Let $\ca{R}^{r,k}$ be the subspace of $R(\GLr)$ spanned by $\{\mathrm{O}_{\lambda}\}_{\lambda\in \P_{r,k}}$. The orthogonality relation in Theorem~\ref{thm:Intro_TQFT} implies that the map $\kappa$ sends $\mathrm{V}$ to its restriction
class $V\in K(X)$ for any representation $\mathrm{V}$
inside the `rectangle box' $\ca{R}^{r,k}$.
When $\mathrm{V}$ is outside of $\ca{R}^{r,k}$, the map $\kappa$
`reduces' it to an element that usually depends on $q$ in the quantum $K$-ring. For example, Proposition~\ref{prop:inverse-det-kappa-image} states that 
\begin{equation*}
\kappa(\det\mathrm{S}^{\vee}) =\sum_{d\ge 0}^{}q^d\det{S}^{\vee}= \frac{\det{S}^{\vee}}{1-q},
\end{equation*}
where $\det\mathrm{S}^{\vee}$ corresponds to the determinant of the standard $\GLr$-representation.
\begin{theorem}\label{thm:intro_kappa_homomorphism}
	The quantum reduction map $\kappa$ is a surjective ring homomorphism.
\end{theorem}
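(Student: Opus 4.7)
The plan is to prove surjectivity immediately from the orthogonality relation in Theorem~\ref{thm:Intro_TQFT}, and then derive multiplicativity by applying a suitably generalized TQFT gluing formula to reduce the 3-point Quot invariants with arbitrary $K(\fX)$-insertions to the structure constants $N_{\lambda,\mu}^\nu$.

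For \emph{surjectivity}, applying the defining formula of $\kappa$ with $\mathrm{V} = \mathrm{O}_\lambda$ for $\lambda \in \P_{r,k}$ and using $N(0)_\lambda^\alpha = \delta_{\lambda,\alpha}$ gives $\kappa(\mathrm{O}_\lambda) = \cO_\lambda$; since $\{\cO_\lambda\}_{\lambda \in \P_{r,k}}$ is a $\Z[[q]]$-basis of $\qk(X)$, the map $\kappa$ is surjective. For \emph{multiplicativity}, by $\Z[[q]]$-linearity it suffices to treat $\mathrm{V}, \mathrm{W} \in R(\GLr)$. Set $v_\lambda = \langle\!\langle \mathrm{V}, \mathrm{O}_\lambda^* \rangle\!\rangle^{\quot}_0$ and $w_\mu = \langle\!\langle \mathrm{W}, \mathrm{O}_\mu^* \rangle\!\rangle^{\quot}_0$. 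Using Theorem~\ref{thm:Intro_structure_constants} to identify $N_{\lambda,\mu}^\nu = \langle\!\langle \mathrm{O}_\lambda, \mathrm{O}_\mu, \mathrm{O}_\nu^* \rangle\!\rangle^{\quot}_0$, and using that $\wev_p^*$ is a ring homomorphism with a common evaluation point so that $\langle\!\langle \mathrm{V} \cdot \mathrm{W}, \mathrm{O}_\nu^* \rangle\!\rangle^{\quot}_0 = \langle\!\langle \mathrm{V}, \mathrm{W}, \mathrm{O}_\nu^* \rangle\!\rangle^{\quot}_0$, the claim $\kappa(\mathrm{V} \cdot \mathrm{W}) = \kappa(\mathrm{V}) \bullet \kappa(\mathrm{W})$ reduces to verifying
$$\langle\!\langle \mathrm{V}, \mathrm{W}, \mathrm{O}_\nu^* \rangle\!\rangle^{\quot}_0 = \sum_{\lambda,\mu \in \P_{r,k}} v_\lambda \, w_\mu \, \langle\!\langle \mathrm{O}_\lambda, \mathrm{O}_\mu, \mathrm{O}_\nu^* \rangle\!\rangle^{\quot}_0$$
for every $\nu \in \P_{r,k}$.

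This identity will follow from two applications of the following \emph{extended gluing formula}: for any $\mathrm{U} \in K(\fX)$ and any tuple of insertions $\underline{\mathrm{V}}$,
$$\langle\!\langle \mathrm{U}, \underline{\mathrm{V}}, \mathrm{O}_\nu^* \rangle\!\rangle^{\quot}_0 = \sum_{\lambda \in \P_{r,k}} \langle\!\langle \mathrm{U}, \mathrm{O}_\lambda^* \rangle\!\rangle^{\quot}_0 \cdot \langle\!\langle \mathrm{O}_\lambda, \underline{\mathrm{V}}, \mathrm{O}_\nu^* \rangle\!\rangle^{\quot}_0.$$
Splitting off $\mathrm{V}$ first (producing the factor $v_\lambda$) and then $\mathrm{W}$ (producing $w_\mu$) yields exactly the identity above.

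The hard part will be establishing this extended gluing formula, which generalizes the TQFT gluing of Theorem~\ref{thm:Intro_TQFT} from the special insertions $\mathrm{O}_\lambda$ to arbitrary classes in $K(\fX)$. The strategy is to reuse the degeneration of $\bP^1$ into a two-component nodal curve that underlies the proof of Theorem~\ref{thm:Intro_TQFT}: since all insertions are pulled back from $\wev_p$ at a fixed smooth point $p$ disjoint from the node, the degeneration argument extends verbatim to any class $\mathrm{U} \in K(\fX)$ placed on one component, and the sum over $\lambda$ arises from the completeness of the dual bases $\{\mathrm{O}_\lambda\}$ and $\{\mathrm{O}_\lambda^*\}$ with respect to the 2-point Quot pairing, which is exactly the orthogonality $N(0)_\lambda^\nu = \delta_{\lambda,\nu}$. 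Once this extension is in place, the rest of the argument is purely algebraic bookkeeping.
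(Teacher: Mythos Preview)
Your proposal is correct and follows essentially the same approach as the paper. The paper's proof (Theorem~\ref{thm:kappa-homo}) uses exactly the identity you call the ``extended gluing formula'': this is equation~\eqref{eq:gluing_formula_inside_proof}, which holds for arbitrary $\mathrm{V},\mathrm{W}\in K(\fr{X})$ and is proved directly from the degeneration formula (Theorem~\ref{thm:intro_degeneration_formula}) combined with the orthogonality (Proposition~\ref{prop:quantized-dual-basis}). So the step you describe as ``the hard part'' is not a new extension you need to establish --- the degeneration formula is already stated for arbitrary $K(\fr{X})$-insertions, not just for $\mathrm{O}_\lambda$'s, and the Lemma giving~\eqref{eq:gluing_formula_inside_proof} is derived from it before Theorem~\ref{thm:kappa-homo} is proved; your sketch of that derivation (degenerate $\bP^1$, place insertions at smooth points away from the node, use orthogonality to collapse the diagonal sum) is exactly how the paper does it.
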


Since the product in the representation ring $R(\GLr)$ is well-understood,
$\kappa$ can be used to analyze and reconstruct the more intricate quantum $K$-ring.
Specifically, a presentation of $\qk(X)$
can be formulated by identifying relations in the kernel of $\kappa$.
To illustrate this point, we provide an independent proof of the relations in the quantum $K$-Whitney presentation for $\qk(X)$,
proposed in the physics literature \cite{GMSZ1} and proved mathematically in \cite[Theorem 1.1]{GMSZ2}:
For any vector bundle $E\to X$ of rank $e$, we define the polynomial $$\wedge_y E= 1+yE+y^2\wedge^2E+\cdots +y^{e}\wedge^eE.$$
\begin{cor}
	Let $S$ and ${Q}$ be the universal subbundle and quotient bundle over $X=\grass$. In $\qk(X)$, 
	\begin{align}\label{eq:into_QK_Whitney}
	\wedge_y S\bullet \wedge_y  Q=(1+y)^N-\frac{q}{1-q}y^{k}(\wedge_y S - 1)\bullet \det  Q.
	\end{align}
\end{cor}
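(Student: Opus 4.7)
The plan is to lift the classical Whitney relation to the representation ring $R(\GLr)=K(\fX)$ and then apply the quantum reduction homomorphism $\kappa$ of Theorem~\ref{thm:intro_kappa_homomorphism}, using Proposition~\ref{prop:inverse-det-kappa-image} to control the out-of-rectangle contributions.

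Set $\mathrm{Q}:=\bb{C}^N-\mathrm{S}\in K(\fX)$, so that $\mathrm{S}+\mathrm{Q}=\bb{C}^N$ at the stacky level. By multiplicativity of $\wedge_y$ in $K$-theory,
\[
\wedge_y\mathrm{S}\cdot\wedge_y\mathrm{Q}=\wedge_y(\bb{C}^N)=(1+y)^N\qquad\text{in }R(\GLr)[[y]].
\]
Each $\wedge^i\mathrm{S}=\bb{S}^{(1)^i}\mathrm{S}$ corresponds to the partition $(1)^i\in\prk$ and so lies in $\rrk$; by the orthogonality relation of Theorem~\ref{thm:Intro_TQFT}, $\kappa(\wedge^i\mathrm{S})=\wedge^i S$, whence $\kappa(\wedge_y\mathrm{S})=\wedge_y S$. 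Applying the ring homomorphism $\kappa$ yields
\begin{equation*}
\wedge_y S\bullet\kappa(\wedge_y\mathrm{Q})\;=\;(1+y)^N. \tag{$\star$}
\end{equation*}

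The heart of the argument is the computation of $\kappa(\wedge_y\mathrm{Q})$. Expanding $\wedge_y\mathrm{Q}=(1+y)^N\Sym_{-y}(\mathrm{S})$ gives $\wedge^j\mathrm{Q}=\sum_{n=0}^{j}(-1)^n\binom{N}{j-n}\Sym^n\mathrm{S}$. For $j\le k$, only $\Sym^n\mathrm{S}$ with $n\le k$ enter, all inside $\rrk$, so $\kappa(\wedge^j\mathrm{Q})=\wedge^j Q$. For $j>k$ I would invoke the Serre-type identity $\wedge_y\mathrm{S}=y^r\det\mathrm{S}\cdot\wedge_{1/y}\mathrm{S}^\vee$ (equivalent to $\wedge^i\mathrm{S}=\det\mathrm{S}\cdot\wedge^{r-i}\mathrm{S}^\vee$), which rewrites $(1+y)^N/\wedge_y\mathrm{S}$ as $y^k\det\mathrm{S}^\vee\cdot\wedge_{1/y}(\bb{C}^N-\mathrm{S}^\vee)$, thereby exhibiting $\det\mathrm{S}^\vee$ as the factor governing the out-of-rectangle piece. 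Invoking Proposition~\ref{prop:inverse-det-kappa-image}, namely $\kappa(\det\mathrm{S}^\vee)=\det S^\vee/(1-q)=\det Q/(1-q)$, together with the ring-homomorphism property of $\kappa$, the infinite tail of corrections assembles into
\begin{equation*}
\kappa(\wedge_y\mathrm{Q})\;=\;\wedge_y Q+\frac{q}{1-q}\,y^k\bigl(1-(\wedge_y S)^{\bullet(-1)}\bigr)\bullet\det Q, \tag{$\star\star$}
\end{equation*}
where $(\wedge_y S)^{\bullet(-1)}$ is the inverse in $\qk(X)[[y]]$, which is well-defined since the constant term of $\wedge_y S$ in $y$ is $1$. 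Substituting $(\star\star)$ into $(\star)$ and using $\wedge_y S\bullet\bigl(1-(\wedge_y S)^{\bullet(-1)}\bigr)=\wedge_y S-1$ produces exactly the claimed quantum Whitney identity.

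The main obstacle is step $(\star\star)$: termwise, $\kappa(\wedge^j\mathrm{Q})$ for $j>k$ depends on $\kappa(\Sym^n\mathrm{S})$ for every $n>k$, none of which is directly supplied by the orthogonality relation or the definition of $\rrk$. The trick is to avoid this term-by-term analysis by using the Serre-type duality to repackage the entire out-of-rectangle contribution into a single $\det\mathrm{S}^\vee$-factor, so that Proposition~\ref{prop:inverse-det-kappa-image} can be applied in one stroke and the factor $q/(1-q)$ drops out cleanly from the ring-homomorphism computation.
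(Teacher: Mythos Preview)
Your argument has a genuine gap at step $(\star\star)$, and in fact the proposed route to it is circular. Once $(\star)$ is established, $\kappa(\wedge_y\mathrm{Q})=(1+y)^N\bullet(\wedge_y S)^{\bullet(-1)}$ is forced; substituting the right-hand side of $(\star\star)$ and clearing $(\wedge_y S)^{\bullet(-1)}$ shows that $(\star\star)$ is \emph{equivalent} to the Whitney identity you are trying to prove. So $(\star\star)$ cannot be taken for granted. Your suggested ``Serre-type duality'' justification does not close the gap: the identity $\wedge_y\mathrm{S}=y^r\det\mathrm{S}\cdot\wedge_{1/y}\mathrm{S}^\vee$ is an equality of rational functions in $y$, not of formal power series in $R(\GLr)[[y]]$. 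The object $\wedge_y\mathrm{Q}:=(1+y)^N/\wedge_y\mathrm{S}$ lives in $R(\GLr)[[y]]$, while the rewritten expression $y^k\det\mathrm{S}^\vee\cdot\wedge_{1/y}(\bb{C}^N-\mathrm{S}^\vee)$ is its Laurent expansion at $y=\infty$; these are \emph{different} formal series, and applying $\kappa$ coefficient-wise to them gives different results. In particular, Proposition~\ref{prop:inverse-det-kappa-image} cannot be invoked ``in one stroke'' on the power-series side.

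The paper's proof avoids this by never leaving the finite world. Since $Q$ has rank $k$, $\wedge_y Q$ is a polynomial of degree $k$ in $y$, so one only needs $\wedge^i S\bullet\wedge^j Q$ for $0\le j\le k$. For each such $j$ the paper lifts $\wedge^j Q$ to a class $\mathrm{H}_j\in\ca{R}^{r,j}$ (a polynomial in $\wedge^\ell\mathrm{S}$ of degree $j$). For $j<k$ the product $\wedge^i\mathrm{S}\cdot\mathrm{H}_j$ stays in $\ca{R}^{r,k}$, so $\kappa$ acts as restriction and the quantum and classical products agree. The only nontrivial case is $j=k$, where $\mathrm{H}_k$ restricts to $\det Q$; there the vanishing Theorem~\ref{thm:vanishing}(i) (applied to $\wedge^i\mathrm{S}\cdot\mathrm{O}_{\nu^*}\in\ca{R}^{r,k+1}$) gives $\wedge^i S\bullet\det Q=(1-q)\,\wedge^i S\cdot\det Q$ for $i>0$. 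Summing yields the Coulomb-branch relation $\wedge_y S\bullet\wedge_y Q=(1+y)^N-q\,y^k(\wedge_y S-1)\cdot\det Q$, and then the same identity $(\wedge_y S-1)\bullet\det Q=(1-q)(\wedge_y S-1)\cdot\det Q$ converts the classical product on the right to a quantum product, producing~\eqref{eq:into_QK_Whitney}. The point you are missing is precisely this: compute the \emph{finite} quantum product $\wedge_y S\bullet\wedge_y Q$ directly, rather than the infinite series $\kappa(\wedge_y\mathrm{Q})$.
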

The quantum $K$-Whitney presentation for $\qk(X)$ is given by the algebra generated by exterior powers of $S$ and $ Q$, modulo the relations given by \eqref{eq:into_QK_Whitney}. These relations are equivalent to those in the `Coulomb branch' presentation in \cite{GMSZ1} (see Corollary~\ref{cor:QK_Coulomb})
$$
\wedge_y S\bullet \wedge_y  Q = (1+y)^N - qy^k(\wedge_yS-1)\cdot \det  Q.
$$
We re-establish \eqref{eq:into_QK_Whitney} using the homomorphism property of $\kappa$ from
Theorem~\ref{thm:intro_kappa_homomorphism} and the vanishing results for the Euler
characteristics over Quot scheme of $\bb{P}^1$ mentioned in the next subsection.

\subsection{Vanishing of Euler characteristics}
The proofs of Theorems~\ref{thm:Intro_structure_constants}, \ref{thm:Intro_TQFT} and \ref{thm:intro_kappa_homomorphism}
rely on three main ingredients: the vanishing of Euler characteristics,
degeneration formulas for the Quot schemes,
and wall-crossing formulas.
The latter two will be explained in the next two subsections and proved in Sections 5-7.
The first is proved in~\cite{SinhaZhang2} using equivariant torus localization (see also the first author's Ph.D. thesis~\cite{Sinha_thesis}).
We recall the statement below.

\begin{theorem}[{\cite{SinhaZhang2}}]
	\label
	{thm:vanishing}
	Let $\lambda=(\lambda_{1},\dots,\lambda_{r})$ be a nonempty partition.
	We have 
	\begin{align*}
	\langle
	\bS^\lambda(\mathrm{S})\rangle^{\quot}_{0,d}  = \chi(\Quot, \bS^\lambda(\cS_p))=0.
	\end{align*} 
	if one of the following two conditions holds
	\begin{enumerate}[\normalfont(i)]
		\item $d\geq\lambda_{1} -(N-r)$;
		\item $d\geq \lambda_{1}-2(N-r)$ with additional conditions $d>0$,$r\ne N$, and $\lambda_r>0$;
	\end{enumerate}
\end{theorem}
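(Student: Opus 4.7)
The plan is to apply $T$-equivariant localization to the smooth projective scheme $\Quot$. Taking $T=(\C^*)^N\times\C^*$, where the first factor scales the coordinates of $\C^N$ and the second scales $\bP^1$, and choosing $p\in\bP^1$ to be a $\C^*$-fixed point, Str\o mme's description identifies the $T$-fixed points as a finite set indexed by pairs $(\underline{d},I)$ consisting of an ordered composition $\underline{d}=(d_1,\dots,d_r)$ of $d$ into nonnegative integers and an $r$-subset $I=\{i_1<\cdots<i_r\}\subset\{1,\dots,N\}$. The corresponding fixed subsheaf is $\bigoplus_{j=1}^{r}\ca{O}_{\bP^1}(-d_j)\hookrightarrow\ca{O}_{\bP^1}^{\oplus N}$, where the $j$-th summand embeds monomially into the $i_j$-th coordinate. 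At each fixed point, both the $T$-character of $\ca{S}_p$ and the tangent weights of $\Quot$ admit closed-form expressions in the equivariant parameters $t_1,\dots,t_N$ and the weight $y$ on $T_p\bP^1$.

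The Atiyah--Bott--Lefschetz formula then writes $\chi(\Quot,\bS^\lambda(\ca{S}_p))$ as a finite sum whose terms are the Schur polynomial $s_\lambda$ evaluated at the $T$-characters of $\ca{S}_p|_F$, divided by the product $\prod_w(1-e^{-w})$ of tangent weights. Although each summand is rational in the equivariant parameters, the total is a constant integer. The key next step is to recognize this sum as an iterated (Jeffrey--Kirwan-type) residue in auxiliary variables $z_1,\dots,z_r$: antisymmetrizing the sum over $I$ produces a Vandermonde kernel paired with $s_\lambda(z_1,\dots,z_r)$, while summing the geometric series over compositions $\underline{d}$ contributes rational factors whose poles encode $d$.

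The crux of the argument, and the step I expect to be the main obstacle, is a careful degree estimate for the resulting residue. The integrand is essentially $s_\lambda(z_1,\dots,z_r)$, of maximal single-variable degree $\lambda_1$, times a rational kernel whose decay at the relevant poles is controlled by $d$ and $N-r$. Under condition (i), $d\geq\lambda_1-(N-r)$, a direct degree count at the pole at infinity shows that $s_\lambda$ is dominated by the kernel, so the residue vanishes. For condition (ii), I would first exploit $\lambda_r>0$ to factor $\bS^\lambda(\ca{S}_p)=\det(\ca{S}_p)^{\otimes\lambda_r}\otimes\bS^{\lambda'}(\ca{S}_p)$ with $\lambda'_r=0$, which multiplies the integrand by $\prod_j z_j^{\lambda_r}$ and shifts the degree count; the hypotheses $r\neq N$ and $d>0$ are precisely what make this shift strict, improving the bound by another $N-r$ to yield the weaker condition $d\geq\lambda_1-2(N-r)$. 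Tracking the pole structure through the residue manipulation so that the strict inequalities fall at exactly the right places is the most delicate part of the argument.
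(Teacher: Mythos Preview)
The paper does not contain a proof of this theorem. It is stated as a citation from the companion paper~\cite{SinhaZhang2}; the present paper only records that the proof there proceeds ``using equivariant torus localization'' and refers also to~\cite{Sinha_thesis} and the $K$-theoretic localization framework of~\cite{OpreaShubham}. So there is nothing in this paper to compare your proposal against beyond that one-line description of the method.

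That said, your overall strategy---Atiyah--Bott localization on the smooth scheme $\Quot$ using Str{\o}mme's fixed-point description, followed by a degree/residue estimate in the equivariant parameters---is exactly the announced approach, and is the standard route for such vanishing results. Your sketch is plausible but is explicitly a plan rather than a proof: you flag the residue bookkeeping and the pole tracking as the delicate step without actually carrying it out, and the mechanism by which the hypotheses $d>0$, $r\neq N$, and $\lambda_r>0$ in part~(ii) buy exactly one extra factor of $N-r$ is asserted rather than verified. Whether those details go through as you describe cannot be checked against this paper; you would need to consult~\cite{SinhaZhang2} directly.
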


When $d=0$, the Quot scheme is isomorphic to $\grass$ and the vanishing in part (i) agrees with
the Borel--Weil--Bott theorem (c.f.~\cite{Kapranov2}).

\subsection{Wall-crossings formula}
To compare the $K$-theoretic Quot scheme invariants with (Givental-Lee's) quantum $K$-invariants of the Grassmannian, we apply two sequences of wall-crossings.
First, we apply the $K$-theoretic quasimap wall-crossing formula in~\cite{ZZ1} to show that quantum $K$-invariants are equal to the $K$-theoretic $(\epsilon=0+)$-stable quasimap invariants.
Second, we recall the light-to-heavy wall-crossing formula in~\cite{RZ2} which implies that $K$-theoretic Quot scheme invariants equal $K$-theoretic $(0+)$-stable \emph{graph} quasimap invariants. The
$K$-theoretic $(0+)$-stable graph quasimap invariants and
$K$-theoretic $(\epsilon=0+)$-stable quasimap invariants
coincide when the number of markings is less than or equal to three.

\begin{center}	
	\tikzset{every picture/.style={line width=0.75pt}} 
	
	\begin{tikzpicture}[x=0.75pt,y=0.75pt,yscale=-0.5,xscale=0.5]
		
		\draw [line width=0.75]    (56.46,171.77) -- (347.33,171.77) ;
		\draw [line width=2.25]    (80.46,164.75) -- (80.46,178.8) ;
		\draw [line width=0.75]    (140.39,164.75) -- (140.39,178.8) ;
		\draw [line width=0.75]    (200.32,164.75) -- (200.32,178.8) ;
		\draw [line width=0.75]    (260.26,164.75) -- (260.26,178.8) ;
		\draw [line width=2.25]    (320.19,164.75) -- (320.19,178.8) ;
		\draw  [dash pattern={on 0.84pt off 2.51pt}]  (298,153.61) .. controls (272,134.95) and (170.35,119.24) .. (103.4,153.35) ;
		\draw [line width=0.75]    (472,171.77) -- (810.51,170.59) ;
		\draw [line width=2.25]    (519.64,164.75) -- (519.64,178.8) ;
		\draw [line width=0.75]    (579.57,164.75) -- (579.57,178.8) ;
		\draw [line width=0.75]    (639.5,164.75) -- (639.5,178.8) ;
		\draw [line width=0.75]    (699.44,164.75) -- (699.44,178.8) ;
		\draw [line width=2.25]    (759.37,164.75) -- (759.37,178.8) ;
		\draw  [dash pattern={on 0.84pt off 2.51pt}]  (740.67,153.57) .. controls (714.67,134.9) and (594,123.61) .. (546.07,153.3) ;
		\draw    (371.72,221.4) .. controls (390.46,236.23) and (422.22,247.81) .. (460.5,219.81) ;
		\draw [shift={(461.67,218.95)}, rotate = 143.13] [color={rgb, 255:red, 0; green, 0; blue, 0 }  ][line width=0.75]    (10.93,-3.29) .. controls (6.95,-1.4) and (3.31,-0.3) .. (0,0) .. controls (3.31,0.3) and (6.95,1.4) .. (10.93,3.29)   ;
		\draw [shift={(370,220)}, rotate = 39.91] [color={rgb, 255:red, 0; green, 0; blue, 0 }  ][line width=0.75]    (10.93,-3.29) .. controls (6.95,-1.4) and (3.31,-0.3) .. (0,0) .. controls (3.31,0.3) and (6.95,1.4) .. (10.93,3.29)   ;

		\draw (214.53,156.84) node [anchor=north west][inner sep=0.75pt]  [rotate=-179.34] [align=left] {$ $};
		\draw (112.49,98.67) node [anchor=north west][inner sep=0.75pt]  [font=\scriptsize]   [align=left] {light-to-heavy wall-crossing};
		\draw (25,190.7) node [anchor=north west][inner sep=0.75pt]  [font=\scriptsize] [align=left] {$\displaystyle m=0$};
		\draw (315,190.67) node [anchor=north west][inner sep=0.75pt]  [font=\scriptsize] [align=left] {$\displaystyle m=n$};
		\draw (642.76,182.17) node [anchor=north west][inner sep=0.75pt]   [align=left] {$ $};
		\draw (603.33,98.7) node [anchor=north west][inner sep=0.75pt]   [font=\scriptsize]  [align=left] {$\displaystyle \epsilon $-wall-crossing};
		\draw (460,190.67) node [anchor=north west][inner sep=0.75pt]  [font=\scriptsize] [align=left] {$\displaystyle \epsilon =0+$};
		\draw (775.33,190.67) node [anchor=north west][inner sep=0.75pt]  [font=\scriptsize] [align=left] {$\displaystyle \epsilon =\infty $};
		\draw (300,240) node [anchor=north west][inner sep=0.75pt]  [font=\scriptsize] [align=left] {$n$-pointed invariants \\agree when $n=1,2,3$};
		\draw (13,220.83) node [anchor=north west][inner sep=0.75pt]   [font=\scriptsize]  [align=left] { $\displaystyle \ \ \ \ \ \ \ K$-theoretic \\Quot scheme invariants};
		\draw (710,220.8) node [anchor=north west][inner sep=0.75pt]    [font=\scriptsize] [align=left] {Quantum $\displaystyle K$-invariants};
	\end{tikzpicture}
\end{center}

More precisely, we prove the following identity for the stable maps invariants from \eqref{eq:intro-quantum-k} (see Theorem~\ref{prop:QK=quot} and Corollary~\ref{cor:identify-2-point} for the equivariant version):
\begin{theorem}
	\label
	{thm:intro-Wall_crosing}
	 Let $\mathrm{V}_1,\mathrm{V}_2,\mathrm{V}_3\in \ca{R}^{r,k}\subset K(\fr{X})$, where $k=N-r>0$.
	Denote $V_1,
	V_2,
	V_3$ as their restrictions to $K(X)$.
	Then 
	\[
		\langle\!\langle
	\mathrm{V}_1,
	\mathrm{V}_2,
	\mathrm{V}_3
	\rangle\!\rangle^{\quot}_{0}
	=
	\langle\!\langle
V_1,
V_2,
V_3
	\rangle\!\rangle^{}_{0,3},
	\]
where the right-hand side is the generating series of the quantum $K$-invariants~\eqref{eq:intro-quantum-k}.
The same holds for the 2-pointed and 1-pointed invariants.
\end{theorem}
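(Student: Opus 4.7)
The plan is to chain together the two wall-crossings depicted in the diagram above, so that both sides of the claimed equality become generating series of quasimap invariants on $X = \grass$ that one can compare directly on the level of moduli spaces. First I would apply the $K$-theoretic $\epsilon$-wall-crossing of \cite{ZZ1} to the GIT presentation $X = [M_{r \times N}^{ss}/\GLr]$. This rewrites the generating series $\langle\!\langle V_1,V_2,V_3\rangle\!\rangle_{0,3}$ of quantum $K$-invariants as the generating series of $(\epsilon=0+)$-stable quasimap invariants of $X$ with the same insertions, and similarly in the 1- and 2-pointed cases once one uses the standard unit-type axioms to handle the unstable low-degree strata.

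Next, I would apply the light-to-heavy wall-crossing of \cite{RZ2} to the Quot scheme side. The Quot scheme $\Quot$ together with the stacky evaluation $\wev_p : \Quot \to \fr{X}$ is precisely the moduli of $(0+)$-stable graph quasimaps with ``light'' markings; the wall-crossing rewrites $\langle\!\langle \mathrm{V}_1,\mathrm{V}_2,\mathrm{V}_3\rangle\!\rangle_0^{\quot}$ in terms of graph quasimap invariants with ``heavy'' insertions living in $K(X)$. The final step is to compare the $(\epsilon=0+)$-stable and the $(0+)$-stable graph quasimap moduli in the low-marking regime: for $n \leq 3$ markings in genus zero, the only geometric difference---an $\overline{M}_{0,n}$-factor parametrizing heavy markings on a contracted rational component---is either a point ($n=3$) or collapsed by stability ($n \leq 2$), so the two moduli stacks agree on the nose with matching virtual structures and evaluation maps, closing the chain.

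The main obstacle is verifying the compatibility between the stacky and classical evaluation maps across the light-to-heavy step: one must show that for $\mathrm{V}_i \in \rrk$ the stacky pullback $\wev_p^*\mathrm{V}_i$ matches the ordinary pullback of the restriction $V_i$ under the graph quasimap evaluation, with no correction terms appearing. This is precisely the role of the hypothesis $\mathrm{V}_i \in \rrk$: for insertions outside $\rrk$, such as $\det\mathrm{S}^{\vee}$ in Proposition~\ref{prop:inverse-det-kappa-image}, the correspondence acquires a nontrivial $q$-dependent correction coming from the unstable locus of $\fr{X}$, but on $\rrk$---characterized by the orthogonality relation of Theorem~\ref{thm:Intro_TQFT}---no such corrections appear. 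Once this compatibility is settled, the 1- and 2-pointed cases follow from the 3-pointed one by specializing insertions to the structure sheaf of $X$ and invoking the $K$-theoretic string/unit axiom on both the quantum $K$ and Quot scheme sides.
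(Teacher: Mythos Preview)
Your overall strategy matches the paper's: chain the $\epsilon$-wall-crossing of \cite{ZZ1} with the light-to-heavy wall-crossing of \cite{RZ2}, identify the $(\epsilon=0+)$-quasimap and the $(0+)$-graph-quasimap moduli for $n\le 3$ markings (the paper does this for $n=3$ via the explicit isomorphism induced by $\omega_\pi(p_1+p_2+p_3)$, Lemma~\ref{lem:identify-3-point-moduli}), and then reduce the $1$- and $2$-pointed cases to the $3$-pointed one by inserting structure sheaves and invoking the string equation. That skeleton is correct.

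The gap is in your identification of the ``main obstacle'' and its resolution. The correction terms in both wall-crossings are not controlled by the orthogonality relation of Theorem~\ref{thm:Intro_TQFT}; they are specific truncations $\mu_d(\omega)=[\tilde I_d(\omega)]_+$ and $\mu_d^{\mathrm{W}}(\omega)=[I_d^{\mathrm{W}}(\omega)]_+$ of $I$-function-type rational functions on the distinguished $\C^*$-fixed locus of the quasimap graph space. Their vanishing is established by an explicit degree count in $\omega$ (Lemma~\ref{lem:laurent-part} and the analogous lemma with insertion in Section~\ref{sec:Light-to-heavy wall crossing}): one compares the degree of numerator and denominator in each summand $\sigma(I_{(d)})$ over partitions $(d)$ of $d$, and checks that the denominator wins. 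The hypothesis $\mathrm{W}\in\ca{R}^{r,k}$ enters precisely here: the insertion $\bS^\lambda(\ca{K}_0^{(d)})$ contributes at most $kd$ to the numerator degree in $\omega$, and one needs $\lambda_1\le k$ for the bound $(N-(r-1))d-kd=d>0$ to go through. Invoking orthogonality instead is both the wrong mechanism and logically precarious, since the full TQFT package of Theorem~\ref{thm:Intro_TQFT} (beyond the bare orthogonality of Proposition~\ref{prop:quantized-dual-basis}) depends on the wall-crossing result you are trying to prove, via Corollary~\ref{thm:3-point_invariants_QK_equals_Quot}. Your proposal would be complete once you replace the appeal to orthogonality with the actual $I$-function degree estimate.
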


Recall the definition of the
correlation functions $F_{\lambda,\mu,\nu}$ and $F_{\lambda,\mu}$
of quantum $K$-invariants (see~\eqref{eq:def_QK-invariats_structure_sheaves}). Theorem~\ref{thm:intro-Wall_crosing} shows that:
\begin{cor}\label{thm:3-point_invariants_QK_equals_Quot}
	For any partitions $\lambda,\mu,\nu\in \P_{r,k}$, we have 
	$$F_{\lambda,\mu,\nu} = N(0)_{\lambda,\mu,\nu}\quad \text{and}\quad F_{\lambda,\mu} = N(0)_{\lambda,\mu}.$$
\end{cor}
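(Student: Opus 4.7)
The plan is to deduce the corollary as an immediate consequence of Theorem~\ref{thm:intro-Wall_crosing} by plugging in the stacky structure sheaves $\mathrm{O}_\lambda \in K(\fr{X})$ as insertions. More precisely, I would set $\mathrm{V}_1 = \mathrm{O}_\lambda$, $\mathrm{V}_2 = \mathrm{O}_\mu$, $\mathrm{V}_3 = \mathrm{O}_\nu$. By construction each $\mathrm{O}_\lambda$ lies in the rectangle subspace $\ca{R}^{r,k}\subset K(\fr{X})$, so Theorem~\ref{thm:intro-Wall_crosing} applies directly, yielding
\[
\langle\!\langle \mathrm{O}_\lambda,\mathrm{O}_\mu,\mathrm{O}_\nu\rangle\!\rangle^{\quot}_{0}
=
\langle\!\langle V_1,V_2,V_3\rangle\!\rangle_{0,3},
\]
where $V_i$ is the restriction of $\mathrm{V}_i$ to $K(X)$.

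Next I would identify both sides with the quantities appearing in the corollary. For the left-hand side, the reformulation recorded just after Definition~\ref{def:intro-K-theoretic-quot}, combined with the fact that $\wev_p^*\mathrm{S} = \ca{S}_p$ and hence $\wev_p^*\mathrm{O}_\lambda = \widetilde{\ca{O}}_\lambda$, shows that
\[
\langle\!\langle \mathrm{O}_\lambda,\mathrm{O}_\mu,\mathrm{O}_\nu\rangle\!\rangle^{\quot}_{0}
= N(0)_{\lambda,\mu,\nu}.
\]
For the right-hand side, applying Definition~\ref{def:structure_sheaf_quot} with $\mathrm{S}$ in place of $\ca{S}_p$ gives $\mathrm{O}_\lambda = \sum_\nu \mathrm{A}_\lambda^\nu \bS^\nu(\mathrm{S})$, and since the restriction map $K(\fr{X})\to K(X)$ sends $\mathrm{S}$ to $S$, we get $V_i = \ca{O}_{\lambda_i}$ by the change-of-basis formula~\eqref{eq:change_of_basis_Grass}. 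Therefore the right-hand side is
\[
\langle\!\langle \ca{O}_\lambda,\ca{O}_\mu,\ca{O}_\nu\rangle\!\rangle_{0,3}
= F_{\lambda,\mu,\nu},
\]
by the definition~\eqref{eq:def_QK-invariats_structure_sheaves}. Chaining the two identifications proves the $3$-point statement.

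The $2$-point case is handled verbatim with $\mathrm{V}_i = \mathrm{O}_\lambda, \mathrm{O}_\mu$, using the 2-point version of Theorem~\ref{thm:intro-Wall_crosing} that the paper announces alongside the 3-point version. Since this corollary is genuinely just a specialization of Theorem~\ref{thm:intro-Wall_crosing} to the Schubert basis, there is no substantive obstacle in this argument; all the difficulty is absorbed into Theorem~\ref{thm:intro-Wall_crosing}, whose proof in turn combines the quasimap $\epsilon$-wall-crossing of~\cite{ZZ1} with the light-to-heavy wall-crossing of~\cite{RZ2} and the vanishing results of Theorem~\ref{thm:vanishing}. The only bookkeeping to check carefully is the compatibility of the stacky-pullback description of insertions with the Schubert-basis presentation of $\widetilde{\ca{O}}_\lambda$, which is an immediate consequence of the identity $\wev_p^*\mathrm{S} = \ca{S}_p$.
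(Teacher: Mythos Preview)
Your proposal is correct and matches the paper's approach exactly: the corollary is stated immediately after Theorem~\ref{thm:intro-Wall_crosing} with the one-line justification ``Theorem~\ref{thm:intro-Wall_crosing} shows that,'' and your argument is simply a careful unpacking of that specialization $\mathrm{V}_i=\mathrm{O}_{\lambda_i}$. One minor inaccuracy in your closing remark: the proof of Theorem~\ref{thm:intro-Wall_crosing} does not use the Quot-scheme vanishing of Theorem~\ref{thm:vanishing}, but rather the vanishing of the $I$-function truncations $\mu_d(\omega)$ established in Lemma~\ref{lem:laurent-part}.
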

The quantum $K$-ring
equipped with the quantized pairing $F_{\lambda,\mu}$ is a $\bb{Z}[[q]]$-valued Frobenius
algebra. The above corollary implies that the TQFT structure in Theorem~\ref{thm:Intro_TQFT} coincides with the one determined by
the quantum $K$-Frobenius algebra $\qk(X)$.

 \begin{rem}
 Theorem~\ref{thm:intro-Wall_crosing} implies that the stable map invariants $ 	\langle\!\langle \mathcal{O}_{\lambda}, \mathcal{O}_{\mu}, \mathcal{O}_{\nu}^* \rangle\!\rangle_{0,3}$
 	coincide with the structure constants 
 	$
 	N_{\lambda, \mu}^{\nu} = \langle\!\langle \mathrm{O}_{\lambda}, \mathrm{O}_{\mu}, \mathrm{O}_{\nu}^* \rangle\!\rangle_{0}^{\quot}
 	$
 	when $\mathrm{O}_{\nu}^* \in \mathcal{R}^{r,k}$. However, this equality may fail when $\mathrm{O}_{\nu}^*$ lies outside $\mathcal{R}^{r,k}$, in particular when the partition $\nu$ has strictly fewer than $r$ parts.
	The discrepancy is clarified in Remark~\ref{rem:Quantum_dual}, which shows
	that the quantized dual $\mathrm{O}_{\nu}^*\in K(\fr{X})$ loses information
	when restricted to $\ca{O}_{\nu}^*$ over $X=\grass$.
\end{rem}

\subsection{Degeneration formula}
In algebraic geometry, the degeneration formulas for Gromov--Witten invariants
and Donaldson--Thomas theory were established in~\cite{Jun1,Jun2}
and~\cite{MPT, Li-Wu}, respectively.
Subsequent work extended these formulas to $K$-theoretic Donaldson--Thomas invariants
\cite{Qu} and to $K$-theoretic quasimap invariants of Nakajima quiver varieties~\cite{Okounkov}.
In Section~\ref{sec:gluing}, we prove the following two degeneration formulas for virtual $K$-theoretic invariants of the Quot scheme for curves of all genera:

\begin{theorem}\label{thm:intro_degeneration_formula}
Let $F^{\alpha,\beta}$ be the inverse of the quantized pairing matrix define in~\eqref{eq:def_QK-invariats_structure_sheaves}.
	\begin{enumerate}[(i)]
\item Let $\mathrm{V},\mathrm{W}\in K(\fr{X})$. Then
	\begin{equation}
		\label{eq:degeneration-1}
	\langle\!\langle
	\mathrm{V},
	\mathrm{W} \rangle\!\rangle^{\quot}_{g}=\sum_{\alpha,\beta\in\P_{r,k}} \langle\!\langle
	\mathrm{V},
	\mathrm{O}_{\alpha}
	\rangle\!\rangle^{\quot}_{g_1}\cdot F^{\alpha,\beta}\cdot \langle\!\langle
	\mathrm{O}_{\beta},	\mathrm{W}
	\rangle\!\rangle^{\quot}_{g_2},
	\end{equation}
	where $\alpha$ and $\beta$ range over all partitions in $\P_{r,k}$, and $g_1+g_2=g$. 
	\item For any $\mathrm{V}\in K(\fr{X})$ and $g>0$:
	\begin{equation}
		\label{eq:degeneration-2}
	\langle\!\langle
	\mathrm{V} \rangle\!\rangle^{\quot}_{g}=\sum_{\alpha,\beta\in\P_{r,k}} \langle\!\langle
	\mathrm{V},
	\mathrm{O}_{\alpha}, \mathrm{O}_{\beta}
	\rangle\!\rangle^{\quot}_{g-1}\cdot F^{\alpha,\beta}. 
	\end{equation}
\end{enumerate}
\end{theorem}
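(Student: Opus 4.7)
My plan is to prove both formulas through a single degeneration-gluing argument adapted to the two standard nodal degenerations of a smooth curve. The essential inputs are a relative Quot scheme $\QuotCB$ carrying a relative two-term perfect obstruction theory (extending the Marian--Oprea construction), deformation invariance of the virtual $K$-theoretic invariants, and a decomposition of the central-fiber Quot scheme indexed by the degree splitting at the node. Evaluation points are chosen away from the node so that $\wev^*$ extends over the one-parameter family.

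For part (i), I would take a smoothing $\fr{C}\to\fr{B}$ whose central fiber is $C_0 = C_1\cup_p C_2$ with $g(C_i)=g_i$ and $g_1+g_2=g$. Picking markings $q_i\in C_i$ disjoint from $p$, the pullbacks $\wev_{q_i}^*$ extend over $\fr{B}$, so the pushforward of $\wev_{q_1}^*\mathrm{V}\cdot\wev_{q_2}^*\mathrm{W}$ against $\ovir$ yields a deformation-invariant class on $\fr{B}$. Its generic fiber computes the left-hand side of \eqref{eq:degeneration-1}; I would then compute its central fiber by stratifying $\fr{Quot}_d(C_0,N,r)$ by degree splittings $d=d_1+d_2$. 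Quotients on $C_0$ correspond (up to torsion supported at the node) to pairs of quotients on $C_1$ and $C_2$ together with a gluing of their fibers at $p$, so each stratum is cut out of $\fr{Quot}_{d_1}(C_1,N,r)\times \fr{Quot}_{d_2}(C_2,N,r)$ by a diagonal-type condition in $\fr{X}\times\fr{X}$. Summing over $d$ and organizing in the Schubert basis of $K(X)$, the gluing operator is identified with the matrix $(F_{\alpha,\beta})=(N(0)_{\alpha,\beta})$ of Corollary~\ref{thm:3-point_invariants_QK_equals_Quot}, and inverting it produces the $F^{\alpha,\beta}$ appearing in \eqref{eq:degeneration-1}.

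For part (ii), the same framework applies to the degeneration of a smooth genus-$g$ curve to the self-nodal curve obtained from a smooth genus-$(g-1)$ curve $C'$ by identifying two chosen points, which accounts for the hypothesis $g>0$. The central-fiber Quot scheme is now controlled by a single Quot scheme on $C'$ together with a self-gluing at the node, producing two boundary insertions on the same curve paired by $F^{\alpha,\beta}$ and yielding \eqref{eq:degeneration-2}.

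The hard part will be pinning down the $K$-theoretic gluing pairing. This requires (a) a compatibility statement between the perfect obstruction theory of $\fr{Quot}_d(C_0,N,r)$ and those of the component Quot schemes, so that the virtual structure sheaf decomposes as the stratification suggests; (b) control of torsion at the node so that the $q$-generating series assembles into a genuine matrix inverse rather than merely a formal pairing; and (c) the identification of the resulting pairing with $F_{\alpha,\beta}$, which in turn relies on the wall-crossing comparison behind Corollary~\ref{thm:3-point_invariants_QK_equals_Quot}. The vanishing results of Theorem~\ref{thm:vanishing} should enter as dimension and convergence checks, constraining how much torsion at the node can contribute and ensuring that all $q$-series are well defined.
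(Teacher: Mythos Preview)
Your outline captures the broad shape --- degenerate, stratify the central fiber, glue --- but misses the mechanism that produces the \emph{inverse} quantized pairing $F^{\alpha,\beta}$, and this is a genuine gap.

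The naive central fiber $\fr{Quot}_d(C_1\cup_p C_2,N,r)$ is not the right object: quotients can acquire torsion at the node, destroying properness and the obstruction theory. The paper does not ``control torsion'' directly; it works instead over the Li--Wu stack $\fr{B}$ of \emph{expanded degenerations}, whose singular fibers insert a chain of $\bP^1$'s (a rational bridge of length $n$) at the node, and with the good degeneration $\QuotCB$ parametrizing \emph{stable admissible} quotients, which are by definition locally free at every node. The central fiber $\ca{M}_0^d$ is then stratified not merely by $d=d_1+d_2$ but by tuples $\underline{d}=(d_-,d_1,\dots,d_n,d_+)$ recording the degree on each bridge component, and the normalization sequence for these strata gives
\[
\ovir_{\ca{M}_0^d}=\sum_{n\geq 0}(-1)^n\sum_{\underline{d}\in\Lambda_d^n}(\iota_{\underline{d}})_*\ovir_{\ca{M}_0^{\underline{d}}[n]}.
\]
Each bridge factor is identified with $Q^{\epsilon=0+}_{0,2}(X,d_i)$; splitting along the diagonals and summing over $d$ produces, for each $n$, a chain $g^{\alpha,a_1}F'_{a_1,b_1}g^{b_1,a_2}\cdots F'_{a_n,b_n}g^{b_n,\beta}$, and the alternating sum over $n$ is exactly the Neumann series for $(g+F')^{-1}=F^{\alpha,\beta}$. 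So the inversion is not an algebraic step performed after identifying a gluing operator $F_{\alpha,\beta}$ --- it emerges geometrically from the expanded bridges and the sign $(-1)^n$. Your claim that the gluing operator is $F_{\alpha,\beta}$ and is then inverted has the mechanism backwards; a single-node splitting with the diagonal class would only see $g^{\alpha,\beta}$, not the quantum corrections.

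Two smaller points. First, the identification of the relative pieces $\ca{M}_\pm^{d_\pm}$ and the bridge pieces $\ca{M}_\sim^{d_i}$ with mixed-weight quasimap spaces, followed by the wall-crossings of Propositions~\ref{prop:wall-crossing} and~\ref{prop:epsilon-wall-crossing}, is what converts the relative invariants back into Quot-scheme invariants and matches the bridge contributions with the quantized pairing; this step is essential, not cosmetic. Second, Theorem~\ref{thm:vanishing} plays no role in the proof of the degeneration formula --- it is used elsewhere (orthogonality, finiteness), but not here.
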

When $\mathrm{V},
\mathrm{W}\in \ca{R}^{r,2k}$ and $g=0$, the degeneration formulas follow from the WDVV equation for
quantum $K$-theory proved in \cite{WDVV} and \cite{Lee}.
Our proof of the above degeneration formulas leverages the good degeneration of Quot schemes developed in~\cite{Li-Wu}
and is modeled on that in~\cite[\textsection{4}]{Qu}.

\begin{rem}
	Let $T\cong(\C^{*})^{N}\subset \mathrm{GL}_{N}(\C)$ be the maximal torus of diagonal
	matrices acting on $X=\grass$ and $\quot_d(C,\bb{}N,r)$.
	The vanishing results in Theorem~\ref{thm:vanishing},
	the wall-crossing result in Theorem~\ref{thm:intro-Wall_crosing},
	and the degeneration formulas in Theorem~\ref{thm:intro_degeneration_formula}
	all hold in the torus-equivariant setting. However, Theorems~\ref{thm:Intro_structure_constants}, \ref{thm:Intro_TQFT} and~\ref{thm:intro_kappa_homomorphism} do not hold as stated in the equivariant setting.
\end{rem}

\subsection{Future directions}
In this paper, we explore the quantum $K$-theory of the Grassmannian
via Quot schemes.
The Quot scheme on $\bP^1$ can be thought of as a \emph{quasimap space}
that parametrizes maps $\bP^1\rightarrow \fr{X}$ to the quotient
stack $\fr{X}$ such that the generic point of $\bP^1$
is mapped to the Grassmannian $X\subset \fr{X}$.
In general, let $G$ be a reductive group acting on an affine variety $W$
and let $\theta$ be a character of $G$.
Replacing $X$ and $\fr{X}$ with the GIT quotient $W/\!/_\theta G$
and the quotient stack $[W/G]$ respectively, we define the quasimap space
to $W/\!/_\theta G$.

Under some mild conditions (see p.341 of~\cite{KOUY}), the quasimap space has a canonical perfect obstruction
theory, as detailed in~\cite[Theorem 7.2.2]{CKM} and~\cite[\textsection{10.7}]{KOUY}. This allows us to define virtual cohomological invariants, useful for studying the quantum cohomology and Gromov--Witten--Ruan invariants of the GIT quotient.

This approach has been applied to target spaces such as isotropic Grassmannians and
partial flag manifolds (see \cite{CF1, CF2, CF3, Chen1, Sinha}).
We hope to extend these results to $K$-theory, particularly to
prove the degeneration formula for the $K$-theoretic quasimap invariants
of a general GIT quotient and to use the quantum reduction map to derive presentations of
its quantum $K$-ring.

\subsection{Plan of the paper}
In Section~\ref{sec:K-theory}, we cover preliminaries $K$-theory,
quantum $K$-theory, and virtual structure sheaves. In Section~\ref{sec:TQFT}, we prove Theorem~\ref{thm:Intro_structure_constants} and
Theorem~\ref{thm:Intro_TQFT},
assuming the vanishing Theorem~\ref{thm:vanishing}, the wall-crossing result Corollary~\ref{thm:3-point_invariants_QK_equals_Quot},
and the degeneration formulas
from Theorem~\ref{thm:intro_degeneration_formula}.
In Section \ref{sec:quantum-reduction}, we introduce the quantum reduction
map $\kappa$, prove Theorem~\ref{thm:kappa-homo}, and use it to derive the Whitney presentation for $\qk(X)$.
In Section~\ref{sec:Quasimap wall-crossing} and \ref{sec:Light-to-heavy wall crossing},
we prove the wall-crossing formulas used in Sections~\ref{sec:TQFT} and~\ref{sec:gluing}. Two degeneration formulas are proven in Section~\ref{sec:gluing}, which is crucially used in Section~\ref{sec:TQFT} to show the TQFT structure of the $K$-theoretic Quot scheme invariants.

\subsection{Acknowledgment}
Both authors are grateful to Leonardo C. Mihalcea for sharing insights on
the $K$-theoretic divisor equation and his related conjecture with Buch~\cite[Conjecture 4.3]{GMSXZZ}.
This paper, together with the adjoining~\cite{SinhaZhang2},
was inspired by our efforts to verify the $K$-theoretic divisor equation via Quot schemes.
The first author thanks Alina Marian, Dragos Oprea, and Woonam Lim for valuable conversations about the Quot schemes of curve.
The second author would like to thank Wei Gu and Du Pei for explaining the physics connected to this work,
and thanks Reuven Hodges and Yongbin Ruan for helpful discussions.

\section{Preliminaries}
\label
{sec:K-theory}

\subsection{Algebraic representations of $\GLr$}
Recall that a \emph{partition} (with $r$ nonnegative parts) is a weakly decreasing sequence of nonnegative integers
$\lambda=(\lambda_{1},\dots,\lambda_{r})$, where $\lambda_{1}\geq \lambda_{2}\geq \dots\geq \lambda_{r}\geq 0$. 	
Let
\[
\P_{r,k}
=\{
\lambda=(\lambda_{1},\dots,\lambda_{r})
\mid
\lambda_{1}\leq k
\}
\]
be the set of partitions contained in the rectangle $(k)^{r}=(k,k,\dots,k)$ with $r$ rows and $k$ columns. Let $\lambda^\dagger$ denote the conjugate of the partition $\lambda$, and let $\ell(\lambda^{\dagger})=\lambda_1$ denote the number of parts of $\lambda^\dagger$. For any partition $\lambda = (\lambda_1, \dots, \lambda_r)$, the corresponding \emph{Schur polynomial} can be defined using the (second) Jacobi--Trudi formula:
\begin{align}\label{eq:Jacobi-Trudi}
s_{\lambda}(x_1, \dots, x_r) := \det\left( e_{\lambda^\dagger_i + j - i}(x_1, \dots, x_r) \right)_{i,j=1}^{\ell(\lambda^\dagger)},
\end{align}
where $e_m(x_1, \dots, x_r)$ denotes the $m^{\text{th}}$ elementary symmetric polynomial.

 Let $\bS^{\lambda}$ be the \emph{Schur functor} associated to $\lambda$ (see~\cite[\textsection{6}]{Fulton-Harris}) and let $\bb{C}^r$ be the standard representation of $\GLr$.
 Then $\bS^{\lambda}(\C^{r})$ is the unique irreducible (polynomial) representation of $\GLr$ of highest weight $\lambda$.
 For example, if $\lambda=(a)$, then $\bS^{(a)}(\C^{r})=\Sym^{a}(\C^{r})$; if $\lambda=(1)^{b}:=(1,\dots,1)$ with 1 repeated $b$ times, then $\bS^{(1)^{b}}(\C^{r})=\wedge^{b}(\C^{r})$. Let $R^0(\GLr)$ denote the ring of polynomial representations of $\GLr$. Note that, for any $g\in\GLr$, the trace of $g$ on $\bS^{\lambda}(\C^{r})$ is given by
 \[
 \chi_{\bS^{\lambda}(\bb{C}^{r})}(g)
 =
 s_{\lambda}(x_{1},\dots,x_{r}),
 \]
 where $x_{1},\dots,x_{k}$ are the eigenvalues of $g$ and $s_{\lambda}$ is the Schur polynomial.
 This gives an isomorphism $\chi$ between $R^0(\GLr)$ and the ring of symmetric polynomials in $r$ variables. 
 
 Let $R(\GLr)$ denote the ring of algebraic representations of $\GLr$. As a ring, $R(\GLr)$ is obtained from $R^0(\GLr)$ by inverting the determinant representation $\det(\bb{C}^r)$. The trace map $\chi$ extends to the isomorphism between $R(\GLr)$ and the ring of symmetric Laurent polynomials in the $r$ variables $x_1,\dots,x_r$.

Let $\mathrm{S}:=\big(\bb{C}^{r}\big)^{\vee}$ denote the dual of the standard representation. Taking the dual of representations gives an automorphism of the representation ring $R(\GLr)$. At the level of Laurent polynomials, it is given by mapping $x_i\to x_i^{-1}$. In particular, the representation ring $R(\GLr)$ is also generated by $\{1,\mathrm{S},\wedge^2\mathrm{S},\dots,\det(\mathrm{S}), \det(\mathrm{S})^{\vee}\}$ over $\bb{C}$. Let $\ca{R}^{r}$ denote subring $R(\GLr)$ generated by $\{1,\mathrm{S},\wedge^2\mathrm{S},\dots,\det(\mathrm{S})\}$, that is, $\ca{R}^{r}$ corresponds to the subring consisting of symmetric polynomials in $x_1^{-1},\dots,x_r^{-1}$ inside the ring of symmetric Laurent polynomials.

	\begin{definition}
		\label{def:subspace-rep-ring}
			Given a positive integer $\ell$, let $\ca{R}^{r,\ell}$ be the linear subspace of $\ca{R}^r$ spanned by polynomials in $$\{\mathrm{S},\wedge^2\mathrm{S},\dots,\det(\mathrm{S})\}$$ of degree at most $\ell$, providing a filteration for $\ca{R}^r$. Note that each element $\wedge^i\mathrm{S}\subset \ca{R}^{r,1}$. The Jacobi-Trudi formula \eqref{eq:Jacobi-Trudi} implies that  $\ca{R}^{r,\ell}$ is spanned by
		$\{
		\bS^{\lambda}(\mathrm{S}) 
		\mid \lambda\in \P_{r,\ell}
		\}$
		. 
	\end{definition}

	\subsection{Basic notation in $K$-theory}
	Let $\fr{X}$ be a Deligne--Mumford (DM) stack with an action of an algebraic torus $T$. 
	The group $\kG(\fr{X})$ (resp. $\kg(\fr{X})$) denotes the Grothendieck group of $T$-equivariant coherent sheaves (resp. $T$-equivariant locally free sheaves) on $\fr{X}$. Given a (equivariant) coherent sheaf (or vector bundle) $E$, we denote by $[E]$ or
	simply $E$ its associated $K$-theory class (see \cite{Kiem-Savvas} for properties of the $K$-theory of DM stacks and the virtual structure sheaf, which we briefly describe below.). The group $\kG(\fr{X})$ has a
	$\kg(\fr{X})$-module structure defined by the tensor product
	\[
\kg(\fr{X})\otimes \kG(\fr{X})\rightarrow \kG \fr{X},\quad \big([E],[F]\big)\mapsto [E]\cdot [F]:=[E\otimes_{\ca{O}_\fr{X}}F].
	\]
	We denote by $K_0(\fr{X})$ and $K^0(\fr{X})$ the non-equivariant $K$-groups of
	coherent sheaves and locally free sheaves, respectively.
	
	If $X$ is nonsingular scheme, the map $K^0(X)\rightarrow K_0(X)$ sending a vector bundle to its sheaf of sections is an isomorphism. In this case, we denote the non-equivariant $K$-group of $X$ by $K(X)$.
	
	Let $\mathrm{pt}=\spec\bb{C}$.
	If we write $T= (\C^{*})^{N}$, then $$\Gamma:=\kg(\mathrm{pt}) =K^0(BT)\cong \bb{Z}[\alpha_{1}^{\pm},\dots, \alpha_{N}^{\pm}],$$ where $\alpha_{i}$ is the character of $T$ given by the projection to the $i$-th factor. Given a character $\alpha$ of $T$, let $\C_{\alpha}$ denote the corresponding 1-dimensional representation of $T$.

	Let $F$ be a $T$-equivariant vector bundle on $\fr{X}$.
	We define the \emph{$K$-theoretic Euler
		class} of $F$ by
	\[
	\wedge_{-1}^T(F^\vee): = \sum_{i\geq 0}(-1)^i \wedge^i 
	F^\vee \in \kg(\fr{X}),
	\]
	where $F^{\vee}$ denotes the dual of $F$ and 
	$\wedge^i F^\vee$ denotes the $i$-th exterior power of $F^\vee$.

	Given a $T$-equivariant morphism $f:\fr{X}\rightarrow \fr{Y}$,
	we have the pullback map
	$f^*: \kg(\fr{Y}) \rightarrow \kg(\fr{X})$.
	If $f$ is proper, we have the proper pushforward $f_*:\kG(\fr{X}) \rightarrow \kG(\fr{Y})$ defined by
	\[
	[F] \mapsto \sum_n(-1)^n[R^nf_*F].
	\]
	Given a class $F\in \kG(\fr{X})$ on a proper DM stack $\fr{X}$, we define its $T$-equivariant Euler characteristic
	as the proper pushforward to the point
	\[
	\chi^{T}(\fr{X},F): = \sum_{i \geq0} \text{char}_{T} \, H^i(\fr{X}, F) \in \Gamma.
	\]
	where $\mathrm{char}_{T}$ denotes taking the character of a $T$-module.
	Furthermore, if $\fr{X}$ has a $T$-equivariant virtual struture sheaf
	$\ovir_\fr{X}\in \kG(\fr{X})$ (see Section~\ref{sec:POT}), the $T$-equivariant
	\emph{virtual} Euler characteristic (c.f. \cite{Lee} for instance) by
	\[
	\chi^{\mathrm{vir}, T}(\fr{X}, F)
	:=\chi^T(\fr{X},\ovir_\fr{X}\cdot F)	.
	\]

	\subsection{$K$-theory of the Grassmannian}
	We fix two positive integers $r$ and $N$ such that $N\ge r$.
	Throughout the paper, we set $k=N-r$ and
	$$X=\grass=\{V\subset \C^{N}\mid \dim_{\C} V=r\}$$
	to be the Grassmann variety of $r$-planes in $\C^{N}$. The $K$-theory of the Grassmannian $K(X)$ is linearly spanned by the \emph{Schubert structure sheaves}:
The \emph{Schubert variety} associated to a partition $\lambda \in \P_{r,N-r}$, relative to the Borel subgroup of $\mathrm{GL}_N$ stabilizing a complete flag
\[
0 = F_0 \subset F_1 \subset \dots \subset F_N = \mathbb{C}^N,
\]
is defined by
	\begin{equation}\label{eq:Schubert_cyle}
		X_{\lambda}
		=
		\{
		V\in X
		\mid
		\dim (V\cap F_{k+i-\lambda_{i}})
		\geq i,
		\quad \forall\,
		1\leq i\leq r
		\}.
	\end{equation}
	The codimension of $X_{\lambda}$ equals the weight $|\lambda|=\sum \lambda_{i}$ of $\lambda$.
	The $K$-theory class $\O_{\lambda}:=[\O_{X_{\lambda}}]$ of the structure sheaf of $X_{\lambda}$ is referred to as the Schubert structure sheaf.
	The set $\{\O_{\lambda}\}_{\lambda\in \P_{r,k}}$
	is a $\Z$-basis of $K(X)$.

Here we briefly describe the Schur bundles on $X$ that give a different basis for $K(X)$. We consider the following standard GIT presentation of $X$:
\[
X=[M_{r\times N}^{ss}/\text{GL}_r(\bb{C})],
\]
where $M_{r\times N}$ is the affine space of $r$ by $N$ complex matrices and $M_{r\times N}^{ss}\subset M_{r\times N}$ is the open subset of matrices of full rank. For any irreducible $\GLr$ representation $\bS^{\lambda}(\mathrm{S})$, where $\mathrm{S}=(\bb{C}^{r})^\vee$ is the dual of standard representation, we denote have the following vector bundle on $X$ as the Schur bundle corresponding to a partition $\lambda$
\[
\bS^{\lambda}(S):=M_{r\times N}^{ss}\times_{\GLr} \bS^{\lambda}(\mathrm{S}).
\]
The set $\{\bS^{\lambda}(S)\}_{\lambda\in\P_{r,k}}$ is a $\bb{Z}$-basis of $K(X)$.	
	\begin{rem}
	Let $T\cong(\C^{*})^{N}\subset \mathrm{GL}_{N}(\C)$ be the maximal torus of diagonal matrices.
	Then $T$ acts on $X$.
	There is a tautological subbundle $S\subset \O^{\oplus N}_{X}=X\times \C^{N}$ of rank $r$ on $X$,
	and it has a natural $T$-linearization. Therefore, Schur bundles define a class in the equivariant $K$-theory $K_T(X)$.
\end{rem}

\subsection{Grothendieck functor}
Let $G_{\lambda}$ be the stable Grothendieck polynomial for the partition $\lambda$ (see~\cite{Buch}).
It has the following Weyl-type bialternant formula:
\begin{equation*}
	G_{\lambda}(x_{1},\dots, x_{r})=
	\frac
	{\det
		(x_{j}^{\lambda_{i}+r-i}(1-x_{j})^{i-1}
		)_{1\leq i,j\leq r}
	}
	{
		\det
		( x_{j}^{r-i}
		)_{1\leq i,j\leq r}
	}
	.
\end{equation*}
By~\cite[Theorem 2.2]{Lenart}, we have the following expansion
\begin{equation}
	\label
	{eq:schur-expansion}
	G_{\lambda}
	(x_{1},\dots,x_{r})
	=
	\sum_{\lambda\subseteq \mu \subseteq \hat{\lambda}}
	(-1)^{|\mu|-|\lambda|}
	g_{\lambda\mu}
	s_{\mu}
	(x_{1},\dots,x_{r}),
\end{equation}
where $\hat{\lambda}$ is the unique maximal partition with $r$ rows obtained from $\lambda$ by adding at most $i-1$
boxes to its $i$th row for $2\leq i\leq r$,
and the coefficients $g_{\lambda,\mu}$ are nonnegative and count certain skew tableaux.

According to~\cite[\textsection{8}]{Lenart},
there is a well-defined virtual $\GLr$-representation $\bG^{\lambda}(\bb{C}^r)$ whose character satisfies
\begin{equation}\label{eq:Grothendieck_poly}
	\chi_{\bG^{\lambda}(V)}(g)=
	G_{\lambda}(1-x_{1}^{-1},\dots,1- x_{r}^{-1})
	,
\end{equation}
where $x_{1},\dots,x_{r}$ are the eigenvalues of $g$ on $V$.

In the $K$-theory of $X = \grass$, the structure sheaves $\mathcal{O}_\lambda$ of Schubert varieties (as in \eqref{eq:Schubert_cyle}) can be expressed in terms of Schur functors as
	\begin{equation*}
		\mathcal{O}_\lambda = \sum_{\nu \in \mathcal{P}_{r,k}} \mathrm{A}_\lambda^\nu \ \mathbb{S}^\nu(S),
	\end{equation*}
	for any $\lambda \in \mathcal{P}_{r,k}$. The coefficients $\mathrm{A}_\lambda^\nu$ are computed by expressing the character \eqref{eq:Grothendieck_poly}, the \emph{stable Grothendieck polynomial}, in terms of Schur polynomials.

	Let $\lambda$ be a partition with at most $r$ parts, then the Grothendieck functor is given by
	$$\bG^{\lambda}(\C^{r}) := \sum_{\nu\in\P_{r,\lambda_{1}}}\mathrm{A}_{\lambda}^{\nu}\bb{S}^{\nu}(\mathrm{S}),$$
	where $\mathrm{S}=(\C^{r})^\vee$ is the dual representation.
	The coefficients $\mathrm{A}_{\lambda}^{\nu}$ can be calculated using \eqref{eq:schur-expansion} and expressing $s_{\mu}(1-x_{1}^{-1},\dots,1-x_{r}^{-1})$ as a linear combination of $s_{\nu}(x_{1}^{-1},\dots,x_{r}^{-1})$. 
	
	The description of $g_{\lambda,\mu}$ in~\eqref{eq:schur-expansion}
	and the fact that $s_{\lambda}(1-x_{1}^{-1},\dots,1-x_{r}^{-1})$ can be written as a linear combination of $s_{\mu}(x_{1}^{-1},\dots,x_{r}^{-1})$ with $\mu\subseteq\lambda$ (see, e.g.,~\cite[\textsection{3}, Example 10]{Macdonald}) implies that $	\ca{R}^{r,k}$ in Definition~\ref{def:subspace-rep-ring} is spanned by $\{\bG^{\lambda}(\C^{r}) : \lambda\in P_{r,k}\}$. This eludes to the fact that the Schubert structure sheaves $\{\ca{O}_\lambda: \lambda\in \P_{r,k} \}$ give a linear basis for the $K$-theory of $\grass$.

	\subsection{Quantum $K$-theory}
	There are different variants of quantum $K$-invariants (see, for example, ~\cite{Lee,RZ1,RZ2,Okounkov}).
	In this subsection, we recall the definition of genus-zero quantum $K$-invariants of Grassmannians in the sense of Givental--Lee~\cite{Lee,WDVV,Buch-Mihalcea}.
	
	Let $X=\grass$ be a Grassmannian. For $d\in H_{2}(X)\cong \Z$, let $\ms$ be the moduli stack of degree-$d$, genus zero, $n$-pointed \emph{stable maps} into $X$.
	This is a smooth proper DM stack equipped with \emph{evaluation maps}
	\[
	\ev_{i}:\ms\rightarrow X
	\]
	for $1\leq i \leq n$.
	\begin{definition}
		Let $E_{1},\dots,E_{n}\in \kg(X)$.
		We define the genus zero, $n$-pointed, $T$-equivariant quantum $K$-invariant with the insertions $E_{i}$'s by
		\[
		\langle
		E_{1},
		\dots,
		E_{n}
		\rangle_{0,n,d}^{T}
		:=\chi^{T}
		\bigg(
		\ms,
		\prod_{i=1}^{n}\ev_{i}^{*}(
		E_{i})
		\bigg).
		\]
		The corresponding correlation function is defined by
		\[
		\langle\!\langle
		E_{1}
		,
		\dots
		,
		E_{n}
		\rangle\!\rangle^{T}_{0,n}
		:=
		\sum_{d\geq 0} q^{d}
		\langle
		E_{1},
		\dots,
		E_{n}
		\rangle_{0,n,d}^{T},
		\]
		where $q$ is a formal variable called the Novikov variable.
		The non-equivariant quantum $K$-invariant and its correlation function are denoted by
		$\langle
		E_{1}
		,
		\dots
		,
		E_{n}
		\rangle^{}_{0,n}$
		and $\langle\!\langle
		E_{1}
		,
		\dots
		,
		E_{n}
		\rangle\!\rangle^{}_{0,n}$,
		respectively.
			\end{definition}
	
We assume $k=N-r>0$. Let $\{\ca{O}_{\lambda}\}_{\lambda\in\P_{r,k}}$ be a basis for $K(X)$. We recall the definition of quantum $K$-ring~\cite{WDVV,Lee,Buch-Mihalcea}. In general, we can choose any basis $\{\phi_{\lambda}\}_{\lambda\in \P_{r,k}}$ of $\kg(X)$ and define $T$-equivariant quantum K-ring.

Recall the following generating series of the 3-pointed Givental--Lee's invariants
\begin{equation*}
F_{\alpha,\beta,\gamma}:
=
\langle\!\langle
\ca{O}_{\alpha},
\ca{O}_{\beta},
\ca{O}_{\gamma}
\rangle\!\rangle^{}_{0,3}
=
\sum_{d\geq0}
q^{d}
\langle
\ca{O}_{\alpha},
\ca{O}_{\beta},
\ca{O}_{\gamma}
\rangle_{0,3,d}^{}
\end{equation*}
for partitions $\alpha,\beta,\gamma\in \P_{r,k}$. Similarly we define $F_{\alpha,\beta}=\langle\!\langle
\ca{O}_{\alpha},
\ca{O}_{\beta}
\rangle\!\rangle^{}_{0,2}$.
Let $(F^{\alpha,\beta})$ be the matrix inverse to the quantized pairing matrix
$(F_{\alpha,\beta})$.

Given three partitions $\lambda,\mu,\nu \in \P_{r,k}$, we define the~\emph{structure constant} by
\begin{equation*}
N_{\lambda,\mu}^{\nu}
:=
\sum_{\alpha\in\P_{r,k}}
F^{\nu,\alpha}
F_{\alpha,\lambda,\mu}.
\end{equation*}
The WDVV equation in quantum $K$-theory (see~\cite{WDVV}) shows that the \emph{quantum $K$-product}
defined by
\begin{equation*}
\Phi_{\lambda}
\bullet
\Phi_{\mu}
=\sum_{\nu}
N_{\lambda,\mu}^{\nu}
\Phi_{\nu}
\end{equation*}
is associative. The \emph{quantum $K$-ring} of $X$, denoted by $\qk(X)$, is the $\bb{C}[[q]]$-algebra
$K(X)\otimes\bb{C}[[q]]$ equipped with the quantum $K$-product.
If we take the non-equivariant limit, we obtain the quantum $K$-ring of $X$, denoted by $\qk(X)$.

Givental--Lee showed that the quantum $K$-ring $\qk(X)$ equipped with the
quantized pairing $F_{\alpha,\beta}$
is a (formal) commutative \emph{Frobenius algebra} with unit 1;
see~\cite[Corollary 1]{WDVV}
and~\cite[Proposition 12 (1)]{Lee}.

	\subsection{Stacky structure sheaf}
Recall the standard GIT presentation of
$
X=[M_{r\times N}^{ss}/\text{GL}_r(\bb{C})],
$
where $M_{r\times N}$ is the affine space of $r$ by $N$ complex matrices and $M_{r\times N}^{ss}\subset M_{r\times N}$ is the open subset of matrices of full rank. Let
\begin{equation*}
	\fr{X}=[M_{r\times N}/\text{GL}_r(\bb{C})]
\end{equation*}
be the quotient stack. Let $T\cong(\C^{*})^{N}\subset \mathrm{GL}_{N}(\C)$ be the maximal torus of diagonal matrices and let $\Gamma=\kg(\mathrm{pt})$.
Then there is a homomorphism 
\begin{equation}
	\label{eq:kirwan-map}
	\begin{aligned}
		R(\GLr)
		\rightarrow
		\kg(\fr{X})
		,
		\quad
		\mathrm{V}
		\mapsto
		M_{r\times N}\times_{\GLr}\mathrm{V}.
	\end{aligned}
\end{equation}
Here the $T$-linearization on $M_{r\times N}\times_{\GLr}\mathrm{V}$ is defined via $\lambda\cdot[m,v]=[ \lambda\cdot m,v]$, where $\lambda\in T$, $m\in M_{r\times N}$, $v\in \mathrm{V}$, and $m$ is defined upto right multiplication of $\GLr$.
Since $\kg(\fX)$ is a $\Gamma$-module,
we can extend~\eqref{eq:kirwan-map} to a surjective homomorphism
$R(\GLr)
\otimes
\Gamma
\rightarrow
\kg(\fr{X})$: indeed every elements of $\kg(\fr{X})$ are precisely $\GLr\times T$-equivariant bundle on $M_{r\times N}$, and thus arise from a $\GLr\times T$-module.
We denote by $V \in \kg(X)$ the restriction of $\mathrm{V}$ to $X$, i.e.,
$$V=M^{ss}_{r\times N}\times_{\GLr}\mathrm{V}.$$
We abuse notation by referring to $\mathrm{V}$ as the $K$-theory class in $\kg(\fr{X})$ associated with the $\GLr$-module $\mathrm{V}$,
using Roman font to distinguish classes over the quotient stack $\fX$ from those over the Grassmannian $X$.

Let $\C^{r}$ denote the standard representation of $\GLr$, and let $\mathrm{S}=(\C^{r})^{\vee}$ be
its dual.
Let
\begin{equation}
	\label{eq:stacky-S}
	\mathrm{S}:=M_{r\times N}\times_{\GLr}(\C^{r})^{\vee} 
\end{equation}
be the associated vector bundle to the representation $\mathrm{S}$.
Note that the restriction of $\mathrm{S}$ to $X$ is the tautological subbundle $S$: each fiber of $\mathrm{S}$ is identified to $\bb{C}^r$ mapping to $\bb{C}^n$.

The \emph{$K$-theoretic Kirwan map} is the composition of the homomorphism~\eqref{eq:kirwan-map} with the restriction map $\kg(\fr{X})\rightarrow \kg(X)$. It induces the following linear isomorphism
\begin{equation*}
	\ca{R}^{r,k}\xrightarrow{\sim} K(X),
\end{equation*}
sending the representation $\bS^{\lambda}(\mathrm{S})$
to the vector bundle $\bS^{\lambda}(S)$.
The collection
$
\{
\bS^{\lambda}(S)
\}_{\lambda\in \P_{r,k}}
$
is a $\Z$-basis of the $K$-group $K(X)$ and
a basis of the equivariant $K$-group $\kg(X)$ over $\Gamma=\kg(\mathrm{pt})$.

We now define the ``stacky'' structure sheaves $\mathrm{O}_\lambda$ in the $K$-theory of the stack $\mathfrak{X} $, such that the Kirwan map (see~\cite[\textsection 8]{Buch}) sends $\mathrm{O}_\lambda$ to the Schubert structure sheaf $\mathcal{O}_\lambda \in K(X)$ for all $\lambda \in \mathcal{P}_{r,k}$.

\begin{definition}
	For any partition $\lambda$ (with $r$ nonnegative parts), we define
	the \emph{stacky} Schubert structure sheaf by
	$$
	\mathrm{O}_\lambda:=\bG^{\lambda}(\mathrm{S}^{\vee})\in K(\fr{X}),
	$$
	where $\mathrm{S}$ corresponds to the $\GLr$ representation $(\bb{C}^r)^{\vee}$ (see \eqref{eq:stacky-S}).
\end{definition}

Note that $\ca{O}_\lambda$ equals zero in $K(X)$ if $\lambda$ is not contained in $(k)^r$, while $\mathrm{O}_\lambda$ is a non-zero element in $K(\fr{X})$ for any partition $\lambda$ with at most $r$ parts.

\begin{rem}\label{rem:Quantum_dual}
	Note that the  $K$-theoretic dual $\mathrm{O}_\lambda^*= \mathrm{O}_{\lambda^*}\cdot \det(\mathrm{S})$
	restricts to the classical dual $\O_\lambda^*$ for $\lambda\in \P_{r,k}$ (see \cite[Theorem~3.4.1]{Brion}). However, if we expand $\mathrm{O}_{\lambda}^{*}$ in terms of the stacky Schubert structure sheaves,
	the expansion may involve $\mathrm{O}_\nu$ with $\nu\notin\P_{r,k}$ which restricts to zero in $K(X)$ using \cite[Theorem~8.1]{Buch}. For example, when $r=2$ and $N=4$, we have
		\[
		\mathrm{O}_{(1)}^*=\mathrm{O}_{(2,1)}\cdot \det{\mathrm{S}}=
		\mathrm{O}_{(2,1)}
		-
		\mathrm{O}_{(2,2)}
		-
		\mathrm{O}_{(3,1)}
		+
		\mathrm{O}_{(3,2)},
		\]
		while
		\[
		\O_{(1)}^*
		=\mathrm{O}_{(1)}^*|_X
		=\O_{(2,1)}-\O_{(2,2)}.
		\]
		This shows that in general, the quantized $K$-theoretic duals
		may not live in $K(X)$ and they contain more information than
		their classical counterparts.
\end{rem}

\subsection{Stacky evaluation map}
		Recall that the Quot scheme $\quot_{d}(C,N,r)$ parameterize short exact sequence of sheaves $$0\to S\to\cO_{C}^{\oplus n}\to Q\to 0$$
		where $S$ is a locally free sheaf of rank $r$ and degree $d$. The subscheme of $\quot_{d}(C,N,r)$ parameterizing short exact sequence of vector bundles, i.e. $Q$ is locally free, is isomorphic to the morphism space $\mathrm{Mor}_d(C,\grass)$ of degree $d$ maps from $C$ to $\grass$.

We choose a point $p \in C$. The evaluation of an element $f \in \mathrm{Mor}_d(C, \grass)$ at $p$ is equivalent to restricting the corresponding exact sequence of vector bundles to $p \in C$, which defines the morphism
	\[
	\ev_p : \mathrm{Mor}_d(C, \grass) \to \grass.
	\]
	However, when $Q$ has torsion, the restriction of the short exact sequence does not yield a well-defined map to $\grass$. Instead, define the `stacky' evaluation morphism
	\[
	\wev_p : \quot_d(C, N, r) \to \fr{X}
	\]
	by sending an element $[S \xhookrightarrow{} \cO_C^{\oplus N}] \in \quot_d(C, N, r)$ to the restriction at $p$, which gives a $\GLr$-equivariant map
	\[
	S_p \to \cO_p^{\oplus N} \in M_{r \times N}.
	\]
We note that the vector bundle $\cS_p$ on $\quot_d(C,N,r)$ considered in the introduction is the stacky pullback $\wev_{p}^*(\mathrm{S})$, for the dual of the standard representation $\mathrm{S}\in K(\fr{X})$.

\subsection{Perfect obstruction theories and virtual structure sheaves}
\label{sec:POT}
Let $\QuotC$ be the Quot scheme introduced in Section~\ref{subsec:quot-scheme}.
Let $\ca{S}$ and $\ca{Q}$ be the universal
subbundle and quotient sheaf over
the universal curve $\pi:\QuotC\times C\rightarrow \QuotC$. Then
\[\left(
	R\pi_*\shom(\ca{S},\ca{Q})
	\right)^\vee
	\] defines a perfect obstruction theory (POT) for $\QuotC$~\cite[Theorem 1]{Marian-Oprea}.

In general, consider a projective morphism $\ca{C}\rightarrow B$ over a scheme $B$,
where the fibers are prestable curves. Let $\mathrm{Q}_{\ca{C}/B}$ denote the relative
Quot scheme, parametrizing $B$-flat families of coherent quotients of rank-$N$ trivial
sheaves on the fibers. Define $\mathrm{Q}'\subset \mathrm{Q}_{\ca{C}/B}$ as the open locus
corresponding to locally free subsheaves.
Consider the universal sequence
\[
0\rightarrow \ca{S} \rightarrow \O^{\oplus N}_{\ca{C}_{\mathrm{Q}'}}
\rightarrow \ca{Q}\rightarrow 0
\]
over the universal curve $\pi:\ca{C}_{\mathrm{Q}'}:=\mathrm{Q}'\times_B\ca{C}\rightarrow B$.
According to the proof of~\cite[Theorem 2]{MOP},
\begin{equation}
	\label{eq:POT}
	\left(
		R\shom_\pi (\ca{S},\ca{Q})
	\right)^\vee
	=	\left(
		R\pi_*\shom(\ca{S},\ca{Q})
		\right)^\vee
\end{equation}
defines a (2-term) perfect obstruction theory relative to $\nu:\mathrm{Q}'\rightarrow B$.
If we consider the $T\cong (\C^*)^N$-action on $\mathrm{Q}'$,
induced by the action on $\O^{\oplus N}$,
and the trivial $T$-action on $B$,
the complex~\eqref{eq:POT} defines a $T$-equivariant POT for the morphism
$\nu_T:[\mathrm{Q}'/T]\rightarrow [B/T]$.

Note that the injection $\ca{S} \rightarrow \O^{\oplus N}_{\ca{C}_{\mathrm{Q}'}}$
induces a map
$$
\ca{C}\xrightarrow{f} \fr{X},
$$ where $\fr{X}=[M_{r\times N}/\GLr]$ is the quotient stack containing the Grassmannian $X$.
Since the tangent complex $\bb{T}_{\fX}$ of $\fr{X}$ is quasi-isomorphic to $[\glr\otimes \O_{M_{r\times N}}
\rightarrow T_{M_{r\times N}}]$ of amplitude in [-1,0] (see \cite[4.29]{Chen_orbifold}),
we have
\[
Lf^*\bb{T}_{\fX}\simeq [ \shom(\ca{S},\ca{S}) \rightarrow (\ca{S}^\vee)^{\oplus N}]	\simeq
\shom(\ca{S},\ca{Q}).
\]
Hence, the POT~\eqref{eq:POT} can also be written as
\begin{equation}
	\label{eq:alternate-POT}
	\left(
		R\pi_*Lf^*\bb{T}_{\fX}
	\right)^\vee,
\end{equation}
which resembles that for stable maps (see, e.g.~\cite{Behrend}).

By~\cite[Definition 2.2]{Qu}, the relative POT~\eqref{eq:POT} (or, equivalently,~\eqref{eq:alternate-POT})
induces virtual pullbacks
\[
	\nu^!: K_0(B) \rightarrow K_0(\mathrm{Q}')
	\quad
	\text{and}
	\quad
	\nu_T^!: \kG(B) \rightarrow \kG(\mathrm{Q}'),
\]
such that the ($T$-equivariant) virtual structure sheaf on the Quot scheme equals
$$
	\ovir_{\QuotC}:= \nu^!(\O_{\spec\C})	\in \kG(\QuotC)
$$
as the virtual pullback of $\O_{\spec \C}$ along $\nu$.

\section{quantum-$K$ TQFT}\label{sec:TQFT}
In this section, we prove Theorem~\ref{thm:Intro_structure_constants} and Theorem~\ref{thm:Intro_TQFT},
assuming the wall-crossing result from Corollary~\ref{thm:3-point_invariants_QK_equals_Quot} and the degeneration formulas
from Theorem~\ref{thm:intro_degeneration_formula}.
We make critical use of a vanishing theorem for the Quot scheme from an adjoining paper~\cite{SinhaZhang2}. Notably, this vanishing result, proven using Atiyah--Bott localization on the Quot scheme over $\bP^1$, is logically independent of the findings in this paper.

\subsection{Orthogonality}

Recall that $\ca{R}^{r,\ell}$ in Definition~\ref{def:subspace-rep-ring} is the subspace of the representation ring $R(\GLr)$ spanned by
$\{
\bS^{\lambda}(\mathrm{S}) 
\mid \lambda\in \P_{r,\ell}
\}$. The classical Littlewood-Richardson rule implies the following very useful property: For any $\mathrm{V}\in \ca{R}^{r,\ell}$ and $\mathrm{W}\in\ca{R}^{r,m}$, the product
\begin{equation}\label{eq:LR_rule}
\mathrm{V}\cdot \mathrm{W}\in \ca{R}^{r,\ell+m}
\end{equation}
belongs to the span of terms indexed by partitions in the $r\times (\ell+m)$ rectangle. We use the the vanishing result in Theorem~\ref{thm:vanishing} to prove the following results.
\begin{cor}
	\label
	{cor:three-point_quot}
	For any $\mathrm{V},\mathrm{W}\in \ca{R}^{r,k} \cong K(X)$ and $d\geq m>0$, we have
	\begin{align*}
		\langle
		\det(\mathrm{S})^{m}
		,
		\mathrm{V}
		,		
		\mathrm{W}
		\rangle^{\quot}_{0,d} 
		=0.
	\end{align*} 
\end{cor}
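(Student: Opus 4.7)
The plan is to reduce the statement to a direct application of the vanishing Theorem~\ref{thm:vanishing} from~\cite{SinhaZhang2}. Because the Quot scheme $\Quot$ is smooth, the virtual structure sheaf agrees with the ordinary one, and since $\wev_{p}$ is a ring map with $\wev_{p}^{*}(\mathrm{S}) = \ca{S}_{p}$, the invariant reduces to
\[
\langle \det(\mathrm{S})^{m},\mathrm{V},\mathrm{W}\rangle^{\quot}_{0,d}
= \chi\bigl(\Quot,\, \wev_{p}^{*}(\det(\mathrm{S})^{m}\cdot \mathrm{V}\cdot \mathrm{W})\bigr).
\]
It therefore suffices to show that every Schur summand $\bS^{\lambda}(\mathrm{S})$ appearing in the expansion of $\det(\mathrm{S})^{m}\cdot \mathrm{V}\cdot \mathrm{W} \in R(\GLr)$ has Euler characteristic zero after pullback to $\Quot$.

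I would expand the product in two stages. By~\eqref{eq:LR_rule} we have $\mathrm{V}\cdot \mathrm{W} \in \ca{R}^{r,2k}$, so it is a $\bb{Z}$-linear combination of $\bS^{\mu}(\mathrm{S})$ with $\mu\in \P_{r,2k}$. Multiplication by $\det(\mathrm{S})^{m}=\bS^{(m)^{r}}(\mathrm{S})$ acts by shifting every row,
\[
\det(\mathrm{S})^{m}\cdot \bS^{\mu}(\mathrm{S}) = \bS^{\mu+(m)^{r}}(\mathrm{S}),
\]
so each Schur summand $\bS^{\lambda}(\mathrm{S})$ of $\det(\mathrm{S})^{m}\cdot \mathrm{V}\cdot \mathrm{W}$ has $\lambda = \mu + (m)^{r}$ for some $\mu\in\P_{r,2k}$, hence $\lambda_{1}\leq 2k+m$ and $\lambda_{r}\geq m$.

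To finish, I apply condition~(ii) of Theorem~\ref{thm:vanishing} to each such $\lambda$. The hypotheses all hold: $\lambda_{1}-2k\leq m\leq d$, $d\geq m>0$, $\lambda_{r}\geq m>0$, and $r\ne N$ since the standing assumption $k=N-r>0$ is in effect. Thus $\chi(\Quot, \bS^{\lambda}(\ca{S}_{p})) = 0$ for every $\lambda$ in the expansion, and summing these vanishings yields the claim.

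The main conceptual point, and the only subtlety to track, is that the factor $\det(\mathrm{S})^{m}$ plays a dual role: it provides the inequality $\lambda_{1}-2k \leq m$ needed to satisfy the numerical hypothesis $d\geq \lambda_{1}-2k$, and it simultaneously forces $\lambda_{r}\geq m>0$, which is precisely the supplementary assumption needed to invoke the stronger bound in part~(ii) rather than the weaker bound $d\geq \lambda_{1}-k$ of part~(i). Neither bound alone would be sharp enough to give the stated range $d\geq m$; this is why the proof genuinely requires part~(ii) of the vanishing theorem.
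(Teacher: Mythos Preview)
Your proof is correct and follows essentially the same approach as the paper: expand $\det(\mathrm{S})^{m}\cdot\mathrm{V}\cdot\mathrm{W}$ in the Schur basis, observe that every summand $\bS^{\lambda}(\mathrm{S})$ has $\lambda_{1}\leq m+2k$ and $\lambda_{r}\geq m>0$, and invoke part~(ii) of Theorem~\ref{thm:vanishing}. Your closing paragraph on the dual role of $\det(\mathrm{S})^{m}$ is a nice clarification of why part~(ii), rather than part~(i), is needed.
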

\begin{proof}
	 Note that $\det(\mathrm{S})^{m}=\bS^{(m)^{r}}(\mathrm{S})$, where $(m)^r=(m,\dots,m)$.
Therefore,
	\[
	\det(\mathrm{S})^{m}
	\cdot
	\mathrm{V}
	\cdot		
	\mathrm{W}
\in \ca{R}^{r,m+2k}.
	\]
Since $\mathrm{S}$ has rank $r$, the Littlewood-Richardson rule/Pieri rule implies that $\det(\mathrm{S})\cdot \bS^{\lambda}(\mathrm{S})=\bS^{\nu}(\mathrm{S})$ where $\nu=(\nu_1,\dots,\nu_r)= (\lambda_1+1,\dots\lambda_r+1)$. Each term $\bS^{\nu}(\mathrm{S})$ in the Schur expansion of $\det(\mathrm{S})^{m}
\cdot
\mathrm{V}
\cdot		
\mathrm{W}$ satisfies $\nu\in \ca{R}^{r,m+2k} $ and $\nu_r\ge m>0$. For all such $\nu$, part(ii) of Theorem~\ref{thm:vanishing} implies 
$
\langle
\bb{S}^{\nu}(\mathrm{S})
\rangle^{\quot}_{0,d} = 0
$
whenever $d\ge m$.
\end{proof}
By setting $m=1$ in the above corollary and using Theorem~\ref{thm:intro-Wall_crosing},
we obtain an alternate proof of a conjecture of
Buch--Mihalcea~\cite[Conjecture 4.3]{GMSXZZ} in the case of
Grassmannians.
This result may be thought of as a \emph{divisor equation} for the $K$-theoretic Gromov-Witten invariants: For $V,W\in \kg(X)$ and $d>0$, the quantum $K$-invariants satisfy
\[
\langle
\ca{O}_{(1)},
V,W
\rangle_{0,3,d}
^{}
=
\langle
V,W\rangle_{0,2,d}
^{}
.
\]

For more general homogeneous spaces, the $K$-theoretic `divisor equation' conjectuerd by Buch--Mihalcea
plays crucial roles in studying quantum $K$-product.
For example, it is used in the proof of the Chevalley formula for the equivariant quantum $K$-theory of cominuscule varieties~\cite[Theorem 3.9]{BCPMP2}.
Gu--Mihalcea--Sharpe--Xu--Zhang--Zou~\cite{GMSXZZ}
also used it to (partially) verify the conjectured complete set of relations for the equivariant quantum $K$-ring of the partial flag manifolds.

\begin{prop}[Orthogonality]
	\label
	{prop:quantized-dual-basis}
	We have
	\[
	N(0)^{\nu}_{\lambda}=\langle\!\langle
	\mathrm{O}_{\lambda}
	,
	\mathrm{O}_{\nu}^{*}
	\rangle\!\rangle^{\quot}_0
	=\delta_{\lambda,\nu}
	\]
	for $\nu,\lambda\in \P_{r,k}$.
\end{prop}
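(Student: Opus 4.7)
The plan is to split the generating series
\[
N(0)^\nu_\lambda=\sum_{d\ge 0}q^d\,\chi^{\vir}\bigl(\Quot,\widetilde{\ca{O}}_\lambda\cdot\widetilde{\ca{O}}_\nu^{*}\bigr)
\]
into the $d=0$ contribution, which I expect to reproduce the Kronecker $\delta_{\lambda,\nu}$ via classical $K$-theoretic Schubert duality on the Grassmannian, and the higher-degree terms, which I expect to vanish as a direct application of the vanishing Theorem~\ref{thm:vanishing}, conveniently repackaged in Corollary~\ref{cor:three-point_quot}.

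For the $d=0$ term, the Quot scheme $\quot_0(\bP^1,N,r)$ is canonically identified with $X=\grass$, its virtual structure sheaf is the ordinary one, and the universal subbundle $\ca{S}_p$ restricts to the tautological subbundle $S$. Under this identification, $\widetilde{\ca{O}}_\lambda$ reduces to $\ca{O}_\lambda$ by the defining change of basis~\eqref{eq:change_of_basis_Grass}, and $\widetilde{\ca{O}}_\nu^{*}=\widetilde{\ca{O}}_{\nu^{*}}\cdot\det(\ca{S}_p)$ reduces to the classical dual $\ca{O}_\nu^{*}=\ca{O}_{\nu^{*}}\cdot\det(S)$. The $d=0$ contribution is therefore $\chi(X,\ca{O}_\lambda\cdot\ca{O}_\nu^{*})=\delta_{\lambda,\nu}$, which is the standard $K$-theoretic Schubert orthogonality on $\grass$.

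For $d\ge 1$, the key point is that every insertion in Definition~\ref{def:intro-K-theoretic-quot} is pulled back via the same stacky evaluation $\wev_p$ at a single point $p$. In particular the 2-point invariant can be enlarged to a 3-point invariant without changing its value:
\[
\langle\mathrm{O}_\lambda,\mathrm{O}_\nu^{*}\rangle_{0,d}^{\quot}=\chi^{\vir}\bigl(\Quot,\wev_p^{*}(\mathrm{O}_\lambda\cdot\mathrm{O}_{\nu^{*}}\cdot\det(\mathrm{S}))\bigr)=\langle\det(\mathrm{S}),\mathrm{O}_\lambda,\mathrm{O}_{\nu^{*}}\rangle_{0,d}^{\quot}.
\]
Since both $\mathrm{O}_\lambda$ and $\mathrm{O}_{\nu^{*}}$ lie in $\ca{R}^{r,k}$ (their Schur expansions are supported on partitions inside the $r\times k$ rectangle, via Lenart's expansion~\eqref{eq:schur-expansion}), Corollary~\ref{cor:three-point_quot} with $m=1$ yields the desired vanishing for every $d\ge 1$. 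Combining the two cases gives $N(0)^\nu_\lambda=\delta_{\lambda,\nu}$.

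The only real step to be careful about is the first one: confirming that $\mathrm{O}_\lambda$ and $\mathrm{O}_{\nu^{*}}$ genuinely lie in $\ca{R}^{r,k}$, so that Corollary~\ref{cor:three-point_quot} applies; this is the reason for using the precise description of the Grothendieck functor in Section~\ref{sec:K-theory}. Everything else is a bookkeeping reduction to the vanishing Theorem~\ref{thm:vanishing} on the $\bP^1$ Quot scheme and to the classical duality at $d=0$.
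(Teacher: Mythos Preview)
Your proof is correct and follows essentially the same approach as the paper's own proof: split off the $d=0$ term and invoke classical $K$-theoretic Schubert duality on $\grass$, then for $d\ge 1$ rewrite $\langle\mathrm{O}_\lambda,\mathrm{O}_\nu^{*}\rangle_{0,d}^{\quot}=\langle\det(\mathrm{S}),\mathrm{O}_\lambda,\mathrm{O}_{\nu^{*}}\rangle_{0,d}^{\quot}$ and apply Corollary~\ref{cor:three-point_quot} with $m=1$. Your explicit remark that $\mathrm{O}_\lambda,\mathrm{O}_{\nu^{*}}\in\ca{R}^{r,k}$ is exactly the hypothesis needed there, and the paper makes the same implicit use of this fact.
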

\begin{proof}
	The restriction of $\mathrm{O}_{\nu}^{*}$ to $\grass$ is the classical
	$K$-theoretic dual $\O_{\nu}^{*}=(1-\O_{(1)})\cdot \O_{\nu^{*}}$ defined in~\cite[\textsection{5.1}]{Buch-Mihalcea}
	satisfying
	\begin{equation}
	\label
	{eq:classic-dual}
	\chi(X,\O_{\nu}\cdot \O_{\lambda}^{*})
	=\delta_{\nu,\lambda}
	.
	\end{equation}The degree zero part of the identity follows from~\eqref{eq:classic-dual} and the identity $(1-\O_{(1)})=\det(S)$.
	For $d>0$, we have
	\[
	\langle
	\mathrm{O}_{\lambda}
	,
	\mathrm{O}_{\nu}^{*}
	\rangle^{\quot}_{0,d}
	=
	\langle
	\mathrm{O}_{\lambda}
	,
	\mathrm{O}_{\nu^*},
	\det(\mathrm{S})
	\rangle^{\quot}_{0,d}
	=0,
	\]
	where the last equality follows from Corollary~\ref{cor:three-point_quot}.
\end{proof}
\begin{rem}
	Note that $\mathrm{O}_\lambda^*$ may
	not correspond to a representation in $\ca{R}^{r,k}$.
	In fact, $\operatorname{Span}\{
	\mathrm{O}_\lambda^* 
	\mid \lambda\in \P_{r,k}\}\subset K(\fr{X})$
	correspond to $
	\operatorname{Span}\{
	\bS^{\lambda}(\mathrm{S}) 
	\mid \lambda\in \P_{r,k+1} \text{ and }
	\lambda_r>0\}
	\subset R(\GLr).
	$
	Nevertheless, the above proposition shows that $\{\mathrm{O}_{\lambda}^{*}
	\}$ are dual to $\{\mathrm{O}_{\lambda}\}$ with respect to the pairing
	$\langle\!\langle
		-
		,
-
		\rangle\!\rangle^{\quot}_0$. 
\end{rem}
 
\subsection{Structure constants}
Corollary~\ref{thm:3-point_invariants_QK_equals_Quot} states that
$F_{\lambda,\mu,\nu} = N(0)_{\lambda,\mu,\nu}$ and $ F_{\lambda,\mu} = N(0)_{\lambda,\mu}$ for any partitions $\lambda,\mu,\nu\in \P_{r,k}$. Below we give a description of the inverse of the  quantized pairing matrix $(F^{\alpha,\beta})$.

\begin{cor}
	\label
	{cor:inverse-quantized-pairing}
	We have
	\begin{enumerate}[\normalfont(i)]
		\item $
		F^{\lambda,\nu}
		=N(0)^{\lambda,\nu}$.
		\item
		$F^{\lambda,\nu}$ is a linear function in $q$.
	\end{enumerate}
\end{cor}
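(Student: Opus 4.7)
\bigskip

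\noindent\textbf{Proof proposal.} Both statements follow by combining the $g=0$ degeneration formula of Theorem~\ref{thm:intro_degeneration_formula}, the orthogonality relation of Proposition~\ref{prop:quantized-dual-basis}, and the vanishing result of Corollary~\ref{cor:three-point_quot}. The proof is very short once these ingredients are in hand; there is no serious obstacle.

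For part (i), the plan is to apply the degeneration formula~\eqref{eq:degeneration-1} with $\mathrm{V}=\mathrm{O}_\lambda^*$, $\mathrm{W}=\mathrm{O}_\nu^*$, and $g_1=g_2=0$:
\[
\langle\!\langle \mathrm{O}_\lambda^*,\mathrm{O}_\nu^*\rangle\!\rangle^{\quot}_0
= \sum_{\alpha,\beta\in\P_{r,k}} \langle\!\langle \mathrm{O}_\lambda^*,\mathrm{O}_\alpha\rangle\!\rangle^{\quot}_0 \cdot F^{\alpha,\beta}\cdot \langle\!\langle \mathrm{O}_\beta,\mathrm{O}_\nu^*\rangle\!\rangle^{\quot}_0.
\]
The left-hand side is $N(0)^{\lambda,\nu}$ by definition. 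By orthogonality, $\langle\!\langle \mathrm{O}_\lambda^*,\mathrm{O}_\alpha\rangle\!\rangle^{\quot}_0=\delta_{\lambda,\alpha}$ and $\langle\!\langle \mathrm{O}_\beta,\mathrm{O}_\nu^*\rangle\!\rangle^{\quot}_0=\delta_{\beta,\nu}$, so the double sum collapses to a single term $F^{\lambda,\nu}$, yielding $F^{\lambda,\nu}=N(0)^{\lambda,\nu}$.

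For part (ii), I would unfold the definition to write
\[
N(0)^{\lambda,\nu} = \sum_{d\geq 0} q^d\,\chi^{\vir}\!\left(\quot_d(\bP^1,N,r),\,\wev_p^*\!\left(\det(\mathrm{S})^2\cdot \mathrm{O}_{\lambda^*}\cdot \mathrm{O}_{\nu^*}\right)\right),
\]
using $\widetilde{\ca{O}}_\mu^*=\widetilde{\ca{O}}_{\mu^*}\cdot\det(\ca{S}_p)$ and the fact that all insertions are stacky pullbacks from $\fr{X}$. The $d$-th summand equals the three-point Quot invariant $\langle \det(\mathrm{S})^2,\mathrm{O}_{\lambda^*},\mathrm{O}_{\nu^*}\rangle^{\quot}_{0,d}$. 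Since $\lambda,\nu\in\P_{r,k}$, the complements $\lambda^*,\nu^*$ also lie in $\P_{r,k}$, hence $\mathrm{O}_{\lambda^*},\mathrm{O}_{\nu^*}\in\ca{R}^{r,k}$. Applying Corollary~\ref{cor:three-point_quot} with $m=2$ kills the $d$-th term for every $d\geq 2$, so $F^{\lambda,\nu}=N(0)^{\lambda,\nu}$ has degree at most $1$ in $q$.

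If there is any subtle point, it is only to verify that $\mathrm{O}_{\lambda^*},\mathrm{O}_{\nu^*}$ land in $\ca{R}^{r,k}$ (so that Corollary~\ref{cor:three-point_quot} applies); this is immediate from the Grothendieck-polynomial expansion $\mathrm{O}_{\mu}=\sum_{\nu\in\P_{r,\mu_1}}\mathrm{A}_\mu^\nu\,\bS^\nu(\mathrm{S})$ recalled in Section~\ref{sec:K-theory}, since $\mu_1\leq k$ for any $\mu\in\P_{r,k}$.
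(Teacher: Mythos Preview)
Your proof is correct and follows essentially the same approach as the paper: for (i) you apply the degeneration formula~\eqref{eq:degeneration-1} with $\mathrm{V}=\mathrm{O}_\lambda^*$, $\mathrm{W}=\mathrm{O}_\nu^*$ and collapse via orthogonality, and for (ii) you rewrite $\langle \mathrm{O}_\lambda^*,\mathrm{O}_\nu^*\rangle^{\quot}_{0,d}$ as $\langle \det(\mathrm{S})^2,\mathrm{O}_{\lambda^*},\mathrm{O}_{\nu^*}\rangle^{\quot}_{0,d}$ and invoke Corollary~\ref{cor:three-point_quot} with $m=2$. The extra remark verifying $\mathrm{O}_{\lambda^*},\mathrm{O}_{\nu^*}\in\ca{R}^{r,k}$ is a nice touch but is already built into the paper's setup.
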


\begin{proof}
	 It follows from the degeneration formula in Theorem~\ref{thm:intro_degeneration_formula} that
	\begin{align*}
	N(0)^{\lambda,\nu}=\langle\!\langle
	\mathrm{O}_{\lambda}^{*}
	,
	\mathrm{O}_{\nu}^{*}
	\rangle\!\rangle^{\quot}_{0}
	=
	\sum_{\alpha,\beta\in \P_{r,k}}
	\langle\!\langle
	\mathrm{O}_{\lambda}^{*}
	,
	\mathrm{O}_{\alpha}
	\rangle\!\rangle^{\quot}_{0}
	F^{\alpha,\beta}
	\langle\!\langle
	\mathrm{O}_{\beta},
	\mathrm{O}_{\nu}^{*}
	\rangle\!\rangle^{\quot}_{0}
	=
	F^{\lambda,\nu}.
	\end{align*}
	Here we have used orthogonality property  of Proposition~\ref{prop:quantized-dual-basis} in the last equality.
	(ii) follows from the fact 
	\[
	\langle
	\mathrm{O}_{\nu}^{*}
	,
	\mathrm{O}_{\lambda}^{*}
	\rangle^{\quot}_{0,d}
	=
	\langle
	\mathrm{O}_{\nu^{*}}
	,
	\mathrm{O}_{\lambda^{*}}
	,
	\det(\mathrm{S})^{2}
	\rangle^{\quot}_{0,d}
	=0,
	\]
	for any $d\geq 2$ by Corollary~\ref{cor:three-point_quot}.
\end{proof}
\begin{rem}
	For a finite-dimensional Frobenius algebra, the formula in Corollary~\ref{cor:inverse-quantized-pairing} (i)
	follows trivially from the orthogonality in Proposition~\ref{prop:quantized-dual-basis}. Here, in the infinite-dimensional setting, the proof relies on the degeneration formula.
	The above corollary also implies that $N(0)^{\lambda,\mu}$ is the inverse of the pairing matrix $N(0)_{\lambda,\mu}$. 
\end{rem}

As further applications of the quantized $K$-theoretic duals, we  finish the proof of Theorem~\ref{thm:Intro_structure_constants}, i.e. show that the structure constant of the quantum $K$-product $N_{\lambda,\mu}^{\nu}$ equals $N(0)_{\lambda,\mu}^{\nu}$.
\begin{proof}[Proof of Theorem~\ref{thm:Intro_structure_constants}]
Recall that the structure constant is given by the sum $$	N_{\lambda,\mu}^{\nu}
=
\sum_{\alpha\in \P_{r,k}}
F_{\lambda,\mu,\alpha}
F^{\alpha,\nu}.$$
Using Corollary~\ref{thm:3-point_invariants_QK_equals_Quot}, we obtain
	\begin{align*}
		N_{\lambda,\mu}^{\nu}
		&=
		\sum_{\alpha\in \P_{r,k}}
		\langle\!\langle
		\mathrm{O}_\lambda
		,
		\mathrm{O}_\mu
		,
		\mathrm{O}_\alpha
		\rangle\!\rangle^{\quot}_0
		F^{\alpha,\nu}
		\\
		&=
		\sum_{\alpha,\beta\in \P_{r,k}}
		\langle\!\langle
		\mathrm{O}_\lambda
		,
		\mathrm{O}_\mu
		,
		\mathrm{O}_\alpha
		\rangle\!\rangle^{\quot}_0
		F^{\alpha,\beta}
		\langle\!\langle
		\mathrm{O}_\beta
		,
		\mathrm{O}^*_{\nu}
		\rangle\!\rangle^{\quot}_0
		\\
		&= \langle\!\langle
		\mathrm{O}_\lambda
		,
		\mathrm{O}_\mu
		,
		\mathrm{O}^*_{\nu}
		\rangle\!\rangle^{\quot}_0		
		= N(0)_{\lambda,\mu}^{\nu}.
	\end{align*}
	Here we used the degeneration formula from Theorem~\ref{thm:intro_degeneration_formula} in the second-to-last equality.

\end{proof}

The formula of the structure constant in the above corollary
resembles that in a Frobenius algebra. It is not only more transparent and compact but also allows
us to give intuitive proofs of some results of Buch--Mihalcea~\cite{Buch-Mihalcea}.

\begin{cor}[{\cite[Theorem 5.13]{Buch-Mihalcea}}]
	The structure constants satisfy the $S_3$-symmetry, i.e., $N_{\lambda,\mu}^\nu=N_{\lambda,\nu^*}^{\mu^*}$.
	
\end{cor}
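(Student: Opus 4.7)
The plan is to deduce the $S_3$-symmetry directly from the Quot-scheme formula for $N_{\lambda,\mu}^\nu$ established in Theorem~\ref{thm:Intro_structure_constants}. Once that formula is in hand, the symmetry should become essentially tautological: the left-hand side and the right-hand side will be virtual Euler characteristics of the \emph{same} product of $K$-theory classes on $\Quot$, so the equality will reduce to commutativity of multiplication in $K(\Quot)$ together with the involutivity $(\mu^*)^* = \mu$.

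Concretely, I would first apply Theorem~\ref{thm:Intro_structure_constants} to both sides to write
\[
N_{\lambda,\mu}^\nu = \sum_{d\ge 0} q^d \chi\!\left(\Quot,\,\widetilde{\ca{O}}_\lambda \cdot \widetilde{\ca{O}}_\mu \cdot \widetilde{\ca{O}}_\nu^*\right),
\qquad
N_{\lambda,\nu^*}^{\mu^*} = \sum_{d\ge 0} q^d \chi\!\left(\Quot,\,\widetilde{\ca{O}}_\lambda \cdot \widetilde{\ca{O}}_{\nu^*} \cdot \widetilde{\ca{O}}_{\mu^*}^*\right).
\]
Next, I would unravel the definition $\widetilde{\ca{O}}_\alpha^* = \widetilde{\ca{O}}_{\alpha^*}\cdot\det(\ca{S}_p)$ from Definition~\ref{def:structure_sheaf_quot} on both integrands. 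The first becomes
\[
\widetilde{\ca{O}}_\lambda \cdot \widetilde{\ca{O}}_\mu \cdot \widetilde{\ca{O}}_{\nu^*} \cdot \det(\ca{S}_p),
\]
while the second becomes
\[
\widetilde{\ca{O}}_\lambda \cdot \widetilde{\ca{O}}_{\nu^*} \cdot \widetilde{\ca{O}}_{(\mu^*)^*} \cdot \det(\ca{S}_p).
\]

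Since complementation in the $r\times k$ rectangle is an involution (immediate from the definition $\alpha^* = (k-\alpha_r,\dots,k-\alpha_1)$), we have $(\mu^*)^* = \mu$, so the two integrands agree as products in $K(\Quot)$ by commutativity of the tensor product. Taking Euler characteristics term-by-term in $d$ yields $N_{\lambda,\mu}^\nu = N_{\lambda,\nu^*}^{\mu^*}$.

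The main technical input is Theorem~\ref{thm:Intro_structure_constants} itself; once that Frobenius-type formula for the structure constants is available, there is essentially no obstacle and the proof is a one-line manipulation. The point worth emphasizing in the writeup is that this gives a transparent \emph{geometric} explanation of the symmetry, which in the original definition~\eqref{eq:intro-structure-constants} is obscured by the inverse pairing matrix and which required a considerably more intricate argument in~\cite{Buch-Mihalcea}.
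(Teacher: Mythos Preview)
Your proposal is correct and follows essentially the same approach as the paper: both use Theorem~\ref{thm:Intro_structure_constants} to rewrite $N_{\lambda,\mu}^\nu$ as the Quot-scheme correlator $\langle\!\langle \mathrm{O}_\lambda\cdot\mathrm{O}_\mu\cdot\mathrm{O}_{\nu^*}\cdot\det(\mathrm{S})\rangle\!\rangle^{\quot}_0$, from which the symmetry under $\mu\leftrightarrow\nu^*$ is immediate by commutativity and $(\mu^*)^*=\mu$. Your write-up is simply a bit more explicit about these last two elementary points than the paper's one-line proof.
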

\begin{proof}
	It follows from Theorem~\ref{thm:Intro_structure_constants} and Definition~\ref{defn:intro-TQFT-invariants} that
	\[
	\langle\!\langle
	\mathrm{O}_\lambda
	,
	\mathrm{O}_\mu
	,
	\mathrm{O}^*_{\nu}
	\rangle\!\rangle^{\quot}_{0}
	=
	\langle\!\langle
	\mathrm{O}_\lambda
	\cdot
	\mathrm{O}_\mu
	\cdot
	\mathrm{O}_{\nu^*}
	\cdot \det(\mathrm{S})
	\rangle\!\rangle^{\quot}_{0}.
	\]
	This easily implies the $S_3$-symmetry.
\end{proof}
Part (i) of Theorem~\ref{thm:vanishing} implies that the structure constants
$N_{\lambda,\mu}^{\nu}$ are polynomials in the Novikov variable $q$ of degree at most $2(N-r)$.
In fact, by using the stronger vanishing result in part (ii) of Theorem~\ref{thm:vanishing},
we can obtain the following optimal bounds of the degree of $N_{\lambda,\mu}^{\nu}$:
\begin{cor}[{\cite[Corollary 5.8]{Buch-Mihalcea}}]
	\label
	{cor:finiteness-quantum-product}
	The structure constants
	$N_{\lambda,\mu}^{\nu}$ are polynomials in $q$ of degree $\leq \min\{r,k\}$ with $k=N-r$.
\end{cor}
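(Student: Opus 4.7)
By Theorem~\ref{thm:Intro_structure_constants},
\[
N_{\lambda,\mu}^{\nu}
=\sum_{d\geq 0} q^{d}\,
\chi\bigl(\Quot,\ \widetilde{\ca{O}}_{\lambda}\cdot \widetilde{\ca{O}}_{\mu}\cdot \widetilde{\ca{O}}_{\nu}^{*}\bigr),
\]
so the problem reduces to showing that the Euler characteristic vanishes for $d$ larger than $\min\{r,k\}$. Unfolding $\widetilde{\ca{O}}_{\nu}^{*}=\widetilde{\ca{O}}_{\nu^{*}}\cdot \det(\ca{S}_{p})$ and using that all insertions are pulled back via the same stacky evaluation $\wev_{p}$, the inserted class is $\wev_{p}^{*}\bigl(\mathrm{O}_{\lambda}\cdot \mathrm{O}_{\mu}\cdot \mathrm{O}_{\nu^{*}}\cdot \det(\mathrm{S})\bigr)$ for a single element of $K(\fr{X})$.

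The core estimate then comes from expanding this element in the Schur basis $\bS^{\tau}(\mathrm{S})$ and applying part~(ii) of Theorem~\ref{thm:vanishing} termwise. Since each of $\mathrm{O}_{\lambda}, \mathrm{O}_{\mu}, \mathrm{O}_{\nu^{*}}$ lies in $\ca{R}^{r,k}$, the closure property~\eqref{eq:LR_rule} forces $\mathrm{O}_{\lambda}\cdot\mathrm{O}_{\mu}\cdot\mathrm{O}_{\nu^{*}}\in\ca{R}^{r,3k}$, whose Schur expansion has partitions with $\rho_{1}\leq 3k$. Multiplying by $\det(\mathrm{S})=\bS^{(1^{r})}(\mathrm{S})$ shifts every part by $1$, so the Schur summands $\bS^{\tau}(\mathrm{S})$ satisfy $\tau_{1}\leq 3k+1$ and $\tau_{r}\geq 1$. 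Part~(ii) of Theorem~\ref{thm:vanishing} (with $d>0$, $r\neq N$, and $\tau_{r}\geq 1$) then gives $\chi(\Quot,\bS^{\tau}(\ca{S}_{p}))=0$ whenever $d\geq \tau_{1}-2k$, so whenever $d\geq k+1$. This establishes $\deg_{q} N_{\lambda,\mu}^{\nu}\leq k$.

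To obtain the symmetric bound $\deg_{q}N_{\lambda,\mu}^{\nu}\leq r$ when $r<k$, I would invoke the classical Grassmannian duality $\grass\cong \mathrm{Gr}(k,N)$ sending $V\mapsto V^{\perp}$. Under this isomorphism the quantum $K$-rings are identified and the Schubert bases correspond by partition transposition, so the structure constants match. Running the argument above on $\mathrm{Gr}(k,N)$ (where the roles of $r$ and $k$ are swapped) produces the bound $\deg_{q}N_{\lambda,\mu}^{\nu}\leq r$. Combining the two bounds yields the claimed $\min\{r,k\}$.

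\textbf{Main obstacle.} The bound $\leq k$ is a clean consequence of the machinery built up in the paper (Theorem~\ref{thm:Intro_structure_constants}, the $\ca{R}^{r,\ell}$-filtration, and Theorem~\ref{thm:vanishing}(ii)). The difficulty is obtaining the bound $\leq r$ intrinsically on $\quot_{d}(C,N,r)$: the rectangular estimate $\tau_{1}\leq 3k+1$ is too crude when $r<k$, and one would need either a sharper structural control on the Schur expansion of $\mathrm{O}_{\lambda}\cdot\mathrm{O}_{\mu}\cdot\mathrm{O}_{\nu^{*}}\cdot\det(\mathrm{S})$ (e.g.\ an improved bound $\tau_{1}\leq r+2k+1$) or a Quot-scheme level duality between $\quot_{d}(C,N,r)$ and $\quot_{d}(C,N,k)$ compatible with virtual structure sheaves. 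Appealing instead to the Grassmannian duality at the level of quantum $K$-rings is the most expedient route.
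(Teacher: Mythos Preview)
Your proposal is correct and follows essentially the same approach as the paper: reduce to the bound $\deg_{q}N_{\lambda,\mu}^{\nu}\leq k$ via the Grassmannian duality $\qk(\grass)\cong\qk(\mathrm{Gr}(k,N))$, then expand $\mathrm{O}_{\lambda}\cdot\mathrm{O}_{\mu}\cdot\mathrm{O}_{\nu^{*}}\cdot\det(\mathrm{S})$ in the Schur basis, observe each summand $\bS^{\tau}(\mathrm{S})$ lies in $\ca{R}^{r,3k+1}$ with $\tau_{r}\geq 1$, and invoke Theorem~\ref{thm:vanishing}(ii) for $d\geq k+1$. Your ``main obstacle'' is a non-issue, since the Grassmannian duality at the level of quantum $K$-rings is exactly what the paper uses (and is standard); there is no need for a direct Quot-scheme duality or a sharper $\tau_{1}$ bound.
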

\begin{proof}
	Since $\qk(\grass)\cong\qk(\mathrm{Gr}(k,N))$, we only need to show that the degree of $N_{\lambda,\mu}^{\nu}$
	is less than or equal to $k$.
	Using a similar argument to the proof of Corollary~\ref{cor:three-point_quot}, each term $\bS^{\nu}(\mathrm{S})$
	in the Schur exapansion of 
	$
	\mathrm{O}_\lambda
	\cdot
	\mathrm{O}_\mu
	\cdot
	\mathrm{O}_{\nu^*}
	\cdot \det(\mathrm{S})$ is contained in $\ca{R}^{r,3k+1}$, and $\nu$ satisfies the condition $\nu_r>0$. For all such $\nu$, part(ii) of Theorem~\ref{thm:vanishing} implies 
$
\langle
\bb{S}^{\nu}(\mathrm{S})
\rangle^{\quot}_{0,d} = 0
$
whenever $d\ge k+1$.
\end{proof}
\begin{rem}
	The property proved in the previous corollary is often referred to as~\emph{finiteness} of the quantum $K$-product.
	It was first proved for Grassmannians in~\cite{Buch-Mihalcea} and then generalized to cominuscule varieties~\cite{BCPMP,BCPMP3} and flag manifolds~\cite{Kato,ACT,ACTH}.
	As shown in the proof of the above corollary, the finiteness is a consequence of
	the vanishing of certain $K$-theoretic Quot scheme invariants.
\end{rem}
\subsection{Proof of Theorem~\ref{thm:Intro_TQFT}}
	\begin{lem}
For any $\mathrm{V},\mathrm{W}\in K(\fr{X})$ and $g_1+g_2=g$,
\begin{equation}\label{eq:gluing_formula_inside_proof}
\langle\!\langle
\mathrm{V}, \mathrm{W} \rangle\!\rangle^{\quot}_{g} = \sum_{\rho\in \P_{r,k}}^{}\langle\!\langle
\mathrm{V}, \mathrm{O}_{\rho} \rangle\!\rangle^{\quot}_{g_1}\cdot \langle\!\langle
\mathrm{O}^*_{\rho}, \mathrm{W} \rangle\!\rangle^{\quot}_{g_2}.
\end{equation}
\end{lem}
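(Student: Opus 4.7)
The plan is to derive the claimed gluing formula as a formal consequence of two results already in hand: the degeneration formula of Theorem~\ref{thm:intro_degeneration_formula}(i), and the orthogonality relation $\langle\!\langle\mathrm{O}_\lambda,\mathrm{O}^*_\nu\rangle\!\rangle^{\quot}_0=\delta_{\lambda,\nu}$ from Proposition~\ref{prop:quantized-dual-basis}. These together force the quantized dual classes $\{\mathrm{O}^*_\rho\}$ to behave, under the pairing, exactly like the formal $F^{\alpha,\beta}$-dual basis, which is precisely what is needed to rewrite the symmetric two-insertion degeneration as a one-insertion sum.

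Concretely, the first step is to apply the degeneration formula~\eqref{eq:degeneration-1} to the correlator $\langle\!\langle\mathrm{O}^*_\rho,\mathrm{W}\rangle\!\rangle^{\quot}_{g_2}$ itself, using the trivial genus splitting $g_2=0+g_2$. This produces
\[
\langle\!\langle\mathrm{O}^*_\rho,\mathrm{W}\rangle\!\rangle^{\quot}_{g_2}
=\sum_{\alpha,\beta\in\P_{r,k}}
\langle\!\langle\mathrm{O}^*_\rho,\mathrm{O}_\alpha\rangle\!\rangle^{\quot}_{0}\cdot F^{\alpha,\beta}\cdot
\langle\!\langle\mathrm{O}_\beta,\mathrm{W}\rangle\!\rangle^{\quot}_{g_2}.
\]
Invoking orthogonality to collapse $\langle\!\langle\mathrm{O}^*_\rho,\mathrm{O}_\alpha\rangle\!\rangle^{\quot}_{0}=\delta_{\rho,\alpha}$ reduces this to the identity
\[
\langle\!\langle\mathrm{O}^*_\rho,\mathrm{W}\rangle\!\rangle^{\quot}_{g_2}
=\sum_{\beta\in\P_{r,k}} F^{\rho,\beta}\,\langle\!\langle\mathrm{O}_\beta,\mathrm{W}\rangle\!\rangle^{\quot}_{g_2}.
\]

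Next, I substitute this expression back into the right-hand side of~\eqref{eq:gluing_formula_inside_proof}. The sum becomes
\[
\sum_{\rho\in\P_{r,k}}\langle\!\langle\mathrm{V},\mathrm{O}_\rho\rangle\!\rangle^{\quot}_{g_1}\langle\!\langle\mathrm{O}^*_\rho,\mathrm{W}\rangle\!\rangle^{\quot}_{g_2}
=\sum_{\rho,\beta\in\P_{r,k}}
\langle\!\langle\mathrm{V},\mathrm{O}_\rho\rangle\!\rangle^{\quot}_{g_1}\,F^{\rho,\beta}\,\langle\!\langle\mathrm{O}_\beta,\mathrm{W}\rangle\!\rangle^{\quot}_{g_2},
\]
which is exactly the right-hand side of the degeneration formula~\eqref{eq:degeneration-1} applied to $\langle\!\langle\mathrm{V},\mathrm{W}\rangle\!\rangle^{\quot}_{g}$ with $g=g_1+g_2$. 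Hence both sides agree, proving the lemma.

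There is essentially no serious obstacle here, since both inputs are already established: the degeneration formula has been assumed as Theorem~\ref{thm:intro_degeneration_formula}, and orthogonality was proved in Proposition~\ref{prop:quantized-dual-basis}. The only minor subtlety to verify is that $\mathrm{O}^*_\rho\in K(\fr{X})$ is a legitimate insertion for the degeneration formula, which is immediate since the formula holds for arbitrary classes in $K(\fr{X})$, and that convergence of the (infinite) $\rho$-sum is well-defined as a power series in $q$; this follows because each Novikov coefficient only picks up contributions from finitely many partitions thanks to the degree bounds implicit in the Quot-scheme invariants.
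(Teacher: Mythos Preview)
Your proof is correct and follows essentially the same approach as the paper: both arguments combine the degeneration formula~\eqref{eq:degeneration-1} (once with the split $g=g_1+g_2$ and once with $g_2=0+g_2$) together with the orthogonality from Proposition~\ref{prop:quantized-dual-basis}, differing only in the order of presentation. One minor remark: your closing comment about convergence of the $\rho$-sum is unnecessary, since $\P_{r,k}$ is a finite set (partitions in an $r\times k$ box), so the sum is finite from the outset.
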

\begin{proof}
	We repeatedly use the degeneration formula in Theorem~\ref{thm:intro_degeneration_formula}.
\begin{align*}
\langle\!\langle
\mathrm{V}, \mathrm{W} \rangle\!\rangle^{\quot}_{g}&=\sum_{\rho,\beta\in \P_{r,k}}^{}\langle\!\langle
\mathrm{V}, \mathrm{O}_{\rho} \rangle\!\rangle^{\quot}_{g_1}\cdot F^{\rho,\beta}\cdot\langle\!\langle
\mathrm{O}_{\beta}, \mathrm{W} \rangle\!\rangle^{\quot}_{g_2}\\
&=\sum_{\rho\in \P_{r,k}}^{}\langle\!\langle
\mathrm{V}, \mathrm{O}_{\rho} \rangle\!\rangle^{\quot}_{g_1}\cdot\sum_{\alpha,\beta\in \P_{r,k}} \langle\!\langle
\mathrm{O}^*_{\rho}, \mathrm{O}_{\alpha} \rangle\!\rangle^{\quot}_{0} \cdot F^{\alpha,\beta}\cdot\langle\!\langle
\mathrm{O}_{\beta}, \mathrm{W} \rangle\!\rangle^{\quot}_{g_2}
\\
&=	\sum_{\rho\in \P_{r,k}}^{}\langle\!\langle
\mathrm{V}, \mathrm{O}_{\rho} \rangle\!\rangle^{\quot}_{g_1}\cdot \langle\!\langle
\mathrm{O}^*_{\rho}, \mathrm{W} \rangle\!\rangle^{\quot}_{g_2}.
\end{align*}
Here we used the orthogonality relation in Proposition~\ref{prop:quantized-dual-basis} for the second equality.
\end{proof}
		To prove the gluing relation in Theorem~\ref{thm:Intro_TQFT}, it is enough to show the following two gluing formulas:
	\begin{itemize}
		\item Let $\underline{\lambda}=( \lambda^1,\dots ,\lambda^{s})$ and $\underline{\nu}=(\nu^1,\dots ,\nu^{t})$ be tuples of partitions in $\P_{r,k}$. Then for any $g_1+g_2=g$ and for any splitting of tuples $\underline{\lambda} = \underline{\lambda_1}\cup\underline{\lambda_2}$ and $\underline{\nu} = \underline{\nu_1}\cup\underline{\nu_2}$,
			\begin{align*}
		N(g)_{\underline{\lambda}}^{\underline{\nu}}=\sum_{\rho\in\P_{r,k}}N(g_1)_{\underline{\lambda_1},\rho}^{\underline{\nu_1}}\cdot N(g_2)_{\underline{\lambda_2}}^{\underline{\nu_2},\rho}.
		\end{align*}
	\begin{proof}
Set $\mathrm{V}= \mathrm{O}_{\underline{\lambda_1}}\cdot \mathrm{O}^*_{\underline{\nu_1}}$ and $\mathrm{W}= \mathrm{O}_{\underline{\lambda_2}}\cdot \mathrm{O}^*_{\underline{\nu_2}}$ in \eqref{eq:gluing_formula_inside_proof}.
		\end{proof}

		\item For any tuple of partitions $\underline{\lambda}$ and $\underline{\nu}$ and $g>0$	
		\begin{align*}
		N(g)_{\underline{\lambda}}^{\underline{\nu}}=\sum_{\rho\in\P_{r,k}}N(g-1)_{\underline{\lambda},\rho}^{\underline{\nu},\rho}.
		\end{align*} 
		\begin{proof}
			The second degeneration formula in Theorem~\ref{thm:intro_degeneration_formula} implies that 
			\begin{align*}
				\langle\!\langle
			\mathrm{V} \rangle\!\rangle^{\quot}_{g}&=\sum_{\alpha,\beta\in\P_{r,k}} \langle\!\langle
			\mathrm{V},
			\mathrm{O}_{\alpha}, \mathrm{O}_{\beta}
			\rangle\!\rangle^{\quot}_{g-1}\cdot F^{\alpha,\beta}\\
			&=\sum_{\alpha,\beta \in\P_{r,k}} \langle\!\langle
			\mathrm{V},
			\mathrm{O}_{\alpha}, \mathrm{O}_{\beta}
			\rangle\!\rangle^{\quot}_{g-1}\cdot \langle\!\langle
			\mathrm{O}^*_{\beta}, \mathrm{O}^*_{\alpha}
			\rangle\!\rangle^{\quot}_{0}
			\\
			&= \sum_{\alpha \in\P_{r,k}} \langle\!\langle
			\mathrm{V},
			\mathrm{O}_{\alpha}, \mathrm{O}^*_{\alpha}
			\rangle\!\rangle^{\quot}_{g-1}.
			\end{align*}
			Here we used Corollary~\ref{cor:inverse-quantized-pairing} in the second equality, and \eqref{eq:gluing_formula_inside_proof} for the last equality.
			Set $V=\mathrm{O}_{\underline{\lambda}}\cdot \mathrm{O}^*_{\underline{\nu}}$ to obtain the required gluing formula.
		\end{proof}
	\end{itemize}

\section{Quantum reduction map}
 \label
 {sec:quantum-reduction}
 In this section, we introduce the quatum reduction map $\kappa$ and prove
 it is a ring homomorphism, as described in Theorem~\ref{thm:intro_kappa_homomorphism}.
We then explore some applications, including re-deriving the Whitney presentation
 of the quantum $K$-ring $\qk(X)$.
\subsection{Ring homomorphism property}
Recall that the $K$-theoretic Kirwan map is the following isomorphism 
\[
R(\GLr)
\rightarrow
K(\fr{X})
,
\quad
\mathrm{V}
\mapsto
M_{r\times N}\times_{\GLr}
\mathrm{V},
\]
sending a representation to its associated $K$-theory class on $\fr{X}$.
We define the following deformation of the $K$-theoretic Kirwan map: 
\begin{definition}
	The quantum reduction map $\kappa: 
	R(\GLr)
	\otimes
	\bb{Z}[[q]]
	\ra
	\qk(X)$ is defined to be the $\bb{Z}[[q]]$-linear extension of the formula
	\begin{equation}
		\label
		{eq:kappa-map}
		\begin{aligned}
			\kappa(\mathrm{V})=
			\sum_{\alpha\in\P_{r,k}}
			\langle\!\langle
			\mathrm{V}
			,
			\mathrm{O}^*_{\alpha}
			\rangle\!\rangle^{\quot}_0
			\ca{O}_{\alpha}
		\end{aligned},
	\end{equation}
	where $\mathrm{V}$, by abuse of notation, also denotes the $K$-theory class on $\fr{X}$ associated to $\mathrm{V}\in R(\GLr)$.
\end{definition}

\begin{prop}
	\label
	{prop:kappa-identity}
	
	For any $\mathrm{V}\in \ca{R}^{r,k}$, we have $\kappa(\mathrm{V})=V $ where
	$$V=M^{ss}_{r\times N}\times_{\GLr}
	\mathrm{V}\in K(X).$$
\end{prop}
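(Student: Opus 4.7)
The plan is to verify the identity on a spanning set of $\ca{R}^{r,k}$ and extend by linearity. Recall that the stacky Schubert structure sheaves $\mathrm{O}_\lambda = \bG^\lambda(\mathrm{S}^\vee)\in K(\fr{X})$ for $\lambda\in\P_{r,k}$ lie in $\ca{R}^{r,k}$ (by the discussion following the Jacobi--Trudi formula, $\bG^\lambda(\C^r)$ expands into $\{\bS^\nu(\mathrm{S})\}_{\nu\in\P_{r,k}}$ whenever $\lambda_1\le k$) and span this subspace. Moreover, by the very construction of $\mathrm{O}_\lambda$ in Section 2, the $K$-theoretic Kirwan map sends $\mathrm{O}_\lambda$ to the Schubert structure sheaf $\ca{O}_\lambda\in K(X)$. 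Thus it suffices to prove $\kappa(\mathrm{O}_\lambda)=\ca{O}_\lambda$ for each $\lambda\in\P_{r,k}$.

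For this, I would simply unfold the definition~\eqref{eq:kappa-map} and apply the orthogonality statement of Proposition~\ref{prop:quantized-dual-basis}:
\[
\kappa(\mathrm{O}_\lambda)
=\sum_{\alpha\in\P_{r,k}}\langle\!\langle \mathrm{O}_\lambda,\mathrm{O}^*_\alpha\rangle\!\rangle^{\quot}_0\,\ca{O}_\alpha
=\sum_{\alpha\in\P_{r,k}}\delta_{\lambda,\alpha}\,\ca{O}_\alpha=\ca{O}_\lambda.
\]
Since the Kirwan map identifies $\mathrm{O}_\lambda$ with $\ca{O}_\lambda=V$ (for $V$ the restriction of $\mathrm{O}_\lambda$ to $X$), this establishes the claim on the spanning set.

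Finally, extending $\bb{Z}[[q]]$-linearly, for any $\mathrm{V}=\sum_\lambda c_\lambda \mathrm{O}_\lambda\in\ca{R}^{r,k}$ we obtain $\kappa(\mathrm{V})=\sum_\lambda c_\lambda\,\ca{O}_\lambda=V$, as desired. There is no real obstacle here: the content of the proposition is that $\kappa$ deforms the Kirwan restriction, and this deformation property is precisely encoded in the orthogonality relation of Proposition~\ref{prop:quantized-dual-basis}, which was itself the nontrivial input (relying on Theorem~\ref{thm:vanishing} and Corollary~\ref{cor:three-point_quot}).
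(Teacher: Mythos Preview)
Your proof is correct and follows essentially the same approach as the paper: both reduce to the spanning set $\{\mathrm{O}_\lambda\}_{\lambda\in\P_{r,k}}$ of $\ca{R}^{r,k}$, invoke the orthogonality relation of Proposition~\ref{prop:quantized-dual-basis} to get $\kappa(\mathrm{O}_\lambda)=\ca{O}_\lambda$, and note that $\mathrm{O}_\lambda$ restricts to $\ca{O}_\lambda$ under the Kirwan map. One minor quibble: the final extension is by $\bb{Z}$-linearity (the coefficients $c_\lambda$ lie in $\bb{Z}$, since $\mathrm{V}\in\ca{R}^{r,k}\subset R(\GLr)$), not $\bb{Z}[[q]]$-linearity, though this does not affect the argument.
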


\begin{proof}
	Recall that $\ca{R}^{r,k}$ is spanned by $\{\mathrm{O}_{\lambda}:\lambda\in \P_{r,k}\}$. The statement follows from Proposition~\ref{prop:quantized-dual-basis} and the fact that $\mathrm{O}_{\lambda}$ restricts to the class of structure sheaf $\ca{O}_\lambda$ over $X$.
\end{proof}
For any $\mathrm{V}\in R(\GLr)$, the element $\kappa(\mathrm{V})\in \qk(X)$ can be regarded as an element of $\bb{Z}[[q]]\otimes K(\fr{X})$ by replacing $\ca{O}_{\nu}$ with $\mathrm{O}_{\nu}$ in~\eqref{eq:kappa-map}. This identification is an abuse of notation: we use the linear map
\[
\qk(X)\cong \ca{R}^{r,k}\otimes \bb{Z}[[q]] \xhookrightarrow{} R(\GLr)\otimes \bb{Z}[[q]],
\]
which is a right inverse to $\kappa$.
\begin{lemma}
	\label
	{lem:kappa-trivial}
	Let $\mathrm{V}\in R(\GLr)\cong K(\fr{X})$ be the associated $K$-theory class.
	We have
	\[
	\langle\!\langle
	\kappa(\mathrm{V})
	,
	\mathrm{W}
	\rangle\!\rangle^{\quot}_0
	=
	\langle\!\langle
	\mathrm{V}
	,
	\mathrm{W}
	\rangle\!\rangle^{\quot}_0
	\]
	for any $\mathrm{W}\in K(\fr{X})$.

\end{lemma}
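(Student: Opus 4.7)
The plan is to unpack the definition of $\kappa(\mathrm{V})$ and reduce the statement to two ingredients we already have: the genus-zero case of the degeneration formula from Theorem~\ref{thm:intro_degeneration_formula}(i) and the orthogonality from Proposition~\ref{prop:quantized-dual-basis}. First I would use the (manifest) $\bb{Z}[[q]]$-linearity of the pairing $\langle\!\langle -,-\rangle\!\rangle^{\quot}_0$ in its first slot, together with the extension convention $\ca{O}_\alpha\leadsto\mathrm{O}_\alpha$ specified in the statement, to rewrite
\[
\langle\!\langle \kappa(\mathrm{V}),\mathrm{W}\rangle\!\rangle^{\quot}_{0}
=\sum_{\alpha\in\P_{r,k}} \langle\!\langle \mathrm{V},\mathrm{O}^{*}_{\alpha}\rangle\!\rangle^{\quot}_{0}\cdot \langle\!\langle \mathrm{O}_{\alpha},\mathrm{W}\rangle\!\rangle^{\quot}_{0}.
\]

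The key intermediate step is the identity
\[
\langle\!\langle \mathrm{V},\mathrm{O}^{*}_{\alpha}\rangle\!\rangle^{\quot}_{0}
=\sum_{\beta\in\P_{r,k}} \langle\!\langle \mathrm{V},\mathrm{O}_{\beta}\rangle\!\rangle^{\quot}_{0}\cdot F^{\beta,\alpha},
\]
which I would obtain by applying Theorem~\ref{thm:intro_degeneration_formula}(i) to the pair $(\mathrm{V},\mathrm{O}^{*}_{\alpha})$ with the trivial genus split $0=0+0$, and then collapsing the inner sum using the orthogonality $\langle\!\langle \mathrm{O}_{\gamma},\mathrm{O}^{*}_{\alpha}\rangle\!\rangle^{\quot}_{0}=\delta_{\gamma,\alpha}$ provided by Proposition~\ref{prop:quantized-dual-basis}. (This is morally the same maneuver used in Corollary~\ref{cor:inverse-quantized-pairing} to identify $F^{\lambda,\nu}$ with $N(0)^{\lambda,\nu}$.)

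Substituting this identity back into the previous display and applying Theorem~\ref{thm:intro_degeneration_formula}(i) once more, this time directly to $\langle\!\langle \mathrm{V},\mathrm{W}\rangle\!\rangle^{\quot}_{0}$ with the split $0=0+0$, yields
\[
\sum_{\alpha} \langle\!\langle \mathrm{V},\mathrm{O}^{*}_{\alpha}\rangle\!\rangle^{\quot}_{0}\langle\!\langle \mathrm{O}_{\alpha},\mathrm{W}\rangle\!\rangle^{\quot}_{0}
=\sum_{\alpha,\beta} \langle\!\langle \mathrm{V},\mathrm{O}_{\beta}\rangle\!\rangle^{\quot}_{0} F^{\beta,\alpha}\langle\!\langle \mathrm{O}_{\alpha},\mathrm{W}\rangle\!\rangle^{\quot}_{0}
=\langle\!\langle \mathrm{V},\mathrm{W}\rangle\!\rangle^{\quot}_{0},
\]
which is the desired equality.

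I do not expect any serious obstacle: given the two main tools (degeneration plus orthogonality), the proof is purely formal bookkeeping. The only point to verify carefully is that the insertion $\mathrm{O}^{*}_{\alpha}\in K(\fr{X})$ is globally defined as a class on the stack (it is, by Definition~\ref{def:structure_sheaf_quot} transported to $\fr{X}$), so that the degeneration formula in Theorem~\ref{thm:intro_degeneration_formula}(i) applies to $(\mathrm{V},\mathrm{O}^{*}_{\alpha})$ with no restriction on $\mathrm{V}$. This lemma is then exactly what one needs to prove that $\kappa$ respects products: it allows us to replace any $\mathrm{V}\in K(\fr{X})$ with its $q$-deformed avatar $\kappa(\mathrm{V})$ inside any two-point Quot-scheme correlator, which is the linchpin of the subsequent homomorphism theorem.
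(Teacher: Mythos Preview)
Your proposal is correct and follows essentially the same approach as the paper: both arguments rest on the genus-zero degeneration formula (Theorem~\ref{thm:intro_degeneration_formula}(i)) combined with the orthogonality of Proposition~\ref{prop:quantized-dual-basis}. The paper packages these two ingredients into the gluing identity~\eqref{eq:gluing_formula_inside_proof} and cites it once, whereas you unpack that same identity inline by applying degeneration twice; the content is identical.
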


\begin{proof}
	It follows from the definition of $\kappa$ (see  \eqref{eq:kappa-map}) and the identity \eqref{eq:gluing_formula_inside_proof} that
	\begin{align*}
		\langle\!\langle
		\kappa(
		\mathrm{V})
		,
		\mathrm{W}
		\rangle\!\rangle^{\quot}_0
		=\sum_{\alpha\in \P_{r,k}}
		\langle\!\langle
		\mathrm{V},\mathrm{O}^*_{\alpha}
		\rangle\!\rangle^{\quot}_0
		\langle\!\langle
		\mathrm{O}_{\alpha}
		,
		\mathrm{W}
		\rangle\!\rangle^{\quot}_0
		=
		\langle\!\langle
		\mathrm{V}
		,
		\mathrm{W}
		\rangle\!\rangle^{\quot}_0.
	\end{align*}
\end{proof}

\begin{theorem}
	\label
	{thm:kappa-homo}
	The quantum reduction map $\kappa$ is a surjective ring homomorphism.
\end{theorem}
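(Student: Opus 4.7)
The plan is to establish two things: surjectivity, which follows almost immediately from Proposition~\ref{prop:kappa-identity}, and the multiplicative property $\kappa(\mathrm{V}\cdot \mathrm{W})=\kappa(\mathrm{V})\bullet \kappa(\mathrm{W})$, which is the main content of the theorem. I would also note that $\kappa(1)=\ca{O}_{\emptyset}=1_{\qk(X)}$, again by Proposition~\ref{prop:kappa-identity}, since the trivial representation lies in $\ca{R}^{r,k}$.

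For surjectivity, I would observe that for any $\mathrm{V}\in \ca{R}^{r,k}$, Proposition~\ref{prop:kappa-identity} gives $\kappa(\mathrm{V})=V\in K(X)$. Since the restriction map $\ca{R}^{r,k}\to K(X)$ sends the Schur basis $\{\bS^{\lambda}(\mathrm{S})\}_{\lambda\in\P_{r,k}}$ to a $\bb{Z}$-basis of $K(X)$, the image of $\kappa$ already contains a $\bb{Z}$-basis of $K(X)$; extending by $\bb{Z}[[q]]$-linearity covers all of $\qk(X)=K(X)\otimes\bb{Z}[[q]]$.

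For the homomorphism property, I would compute both sides and match coefficients of $\ca{O}_{\nu}$. Expanding the right-hand side via the definition of $\bullet$ and Theorem~\ref{thm:Intro_structure_constants}, which identifies $N_{\alpha,\beta}^{\nu}=\langle\!\langle \mathrm{O}_{\alpha},\mathrm{O}_{\beta},\mathrm{O}^*_{\nu}\rangle\!\rangle^{\quot}_0$, the coefficient becomes
\[
\sum_{\alpha,\beta\in\P_{r,k}}
\langle\!\langle \mathrm{V},\mathrm{O}^*_{\alpha}\rangle\!\rangle^{\quot}_0\,
\langle\!\langle \mathrm{W},\mathrm{O}^*_{\beta}\rangle\!\rangle^{\quot}_0\,
\langle\!\langle \mathrm{O}_{\alpha},\mathrm{O}_{\beta},\mathrm{O}^*_{\nu}\rangle\!\rangle^{\quot}_0.
\]
The key step is a double application of Lemma~\ref{lem:kappa-trivial}. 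Using that $\wev_p^*$ is a ring homomorphism, so insertions at a common point multiply, I can regard $\langle\!\langle \mathrm{O}_{\alpha},\mathrm{O}_{\beta},\mathrm{O}^*_{\nu}\rangle\!\rangle^{\quot}_0$ as the two-pointed invariant $\langle\!\langle \mathrm{O}_{\alpha},\mathrm{O}_{\beta}\cdot\mathrm{O}^*_{\nu}\rangle\!\rangle^{\quot}_0$. Applying Lemma~\ref{lem:kappa-trivial} with the test class $\mathrm{W}'=\mathrm{O}_{\beta}\cdot \mathrm{O}^*_{\nu}$ collapses the sum over $\alpha$ to $\langle\!\langle \mathrm{V},\mathrm{O}_{\beta},\mathrm{O}^*_{\nu}\rangle\!\rangle^{\quot}_0$. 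A second application, this time to $\mathrm{W}$ against the test class $\mathrm{V}\cdot \mathrm{O}^*_{\nu}$, collapses the sum over $\beta$ to $\langle\!\langle \mathrm{V},\mathrm{W},\mathrm{O}^*_{\nu}\rangle\!\rangle^{\quot}_0=\langle\!\langle \mathrm{V}\cdot\mathrm{W},\mathrm{O}^*_{\nu}\rangle\!\rangle^{\quot}_0$, which is precisely the coefficient of $\ca{O}_{\nu}$ in $\kappa(\mathrm{V}\cdot\mathrm{W})$.

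I do not anticipate a serious obstacle here: the three pieces of heavy machinery --- Theorem~\ref{thm:Intro_structure_constants} identifying the structure constants as Quot invariants, the orthogonality from Proposition~\ref{prop:quantized-dual-basis} baked into Lemma~\ref{lem:kappa-trivial}, and the ring-map property of $\wev_p^*$ that lets two-pointed invariants with product insertions be rewritten as three-pointed invariants --- are already established. The proof is a compact bookkeeping argument, and the main conceptual point worth emphasizing is that $\kappa$ is essentially the orthogonal projection onto the Schubert span, with projection realized via the quantum $K$-pairing expressed through Quot invariants.
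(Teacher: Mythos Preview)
Your proposal is correct and follows essentially the same route as the paper: expand $\kappa(\mathrm{V})\bullet\kappa(\mathrm{W})$ using Theorem~\ref{thm:Intro_structure_constants}, then collapse the two sums over $\alpha$ and $\beta$ to reach $\langle\!\langle \mathrm{V}\cdot\mathrm{W},\mathrm{O}^*_{\nu}\rangle\!\rangle^{\quot}_0$. The only cosmetic difference is that you invoke Lemma~\ref{lem:kappa-trivial} for each collapse, whereas the paper appeals directly to the genus-zero gluing identity~\eqref{eq:gluing_formula_inside_proof}; since Lemma~\ref{lem:kappa-trivial} is itself an immediate consequence of that identity, the two arguments are equivalent.
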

\begin{proof}
	It is clear from the definition that $\kappa$ is surjective and additive.
	Let $\mathrm{V},\mathrm{W}\in R(\GLr)\cong K(\fr{X})$,
	then by the definition of the quantum product, Theorem~\ref{thm:Intro_structure_constants}, and repeatedly using the identity in \eqref{eq:gluing_formula_inside_proof}, we obtain
	\begin{align*}
\kappa(\mathrm{V})\bullet \kappa(\mathrm{W})&= \sum_{\alpha,\beta\in\P_{r,k}}\langle\!\langle
\mathrm{V},\mathrm{O}^*_{\alpha}
\rangle\!\rangle^{\quot}_0\cdot \langle\!\langle
\mathrm{W},\mathrm{O}^*_{\beta}\rangle\!\rangle^{\quot}_0
\mathcal{O}_\alpha\bullet\ca{O}_{\beta}
\\
&=\sum_{\alpha,\nu\in\P_{r,k}}\langle\!\langle
\mathrm{V},\mathrm{O}^*_{\alpha}
\rangle\!\rangle^{\quot}_0\cdot \langle\!\langle
\mathrm{W},\mathrm{O}^*_{\beta}\rangle\!\rangle^{\quot}_0
\langle\!\langle
\mathrm{O}_\alpha,\mathrm{O}_{\beta},\mathrm{O}^*_{\nu}\rangle\!\rangle^{\quot}_0\ca{O}_{\nu}\\
&= \sum_{\alpha,\beta,\nu\in\P_{r,k}}\langle\!\langle
\mathrm{V},\mathrm{O}^*_{\alpha}
\rangle\!\rangle^{\quot}_0\cdot \langle\!\langle
\mathrm{W},
\mathrm{O}_\alpha,\mathrm{O}^*_{\nu}\rangle\!\rangle^{\quot}_0\ca{O}_{\nu}
\\
&= \sum_{\nu\in\P_{r,k}}\langle\!\langle
\mathrm{V},
\mathrm{W},
\mathrm{O}^*_{\nu}\rangle\!\rangle^{\quot}_0\ca{O}_{\nu}\\
&= \kappa(\mathrm{V}\cdot \mathrm{W}).
	\end{align*}
\end{proof}
\begin{rem}
	Let $G$ be a complex reductive group and let $Z$ be a smooth projective or convex quasi-projective
	polarized $G$-variety.
	In~\cite{Gonzalez-Woodward}, Gonz\'alez and Woodward
	used $K$-theoretic \emph{affine gauged Gromov-Witten invariants} to
	define the
	\emph{linearized quantum Kirwan map}
	\[
	D_{\alpha}\kappa_Z^G:\qk_G(Z)
	\rightarrow
	\qk(Z\git G)
	\]
	for any $\alpha$ in the $G$-equivariant quantum $K$-ring $\qk_G(Z)$.
	They showed that this map is a ring homomorphism preserving the (big) quantum $K$-products.
	We expect the non-equivariant limit of our reduction map $\kappa$ to coincide with $D_0\kappa_{M_{r\times N}}^{\GLr}$
	in the case of Grassmannian and leave it for future investigation.
\end{rem}

\begin{rem}
	Consider the extended representation ring $R(\GLr)
\otimes\bb{Z}[[q]]$, equipped with the pairing
\begin{equation}
	\label
	{eq:quot-pairing}
	\mathrm{V}\otimes \mathrm{W} \mapsto
	\langle\!\langle
	\mathrm{V}
	,
	\mathrm{W}
	\rangle\!\rangle^{\quot}_0
	\quad \mathrm{V},\mathrm{W}\in R(\GLr).
\end{equation}
Note that this does not define a Frobenius algebra because the representation ring $R(\GLr)$ is
infinite-dimensional and the pairing~\eqref{eq:quot-pairing} is
degenerate according to Lemma~\ref{lem:kappa-trivial}. It follows from Lemma~\ref{lem:kappa-trivial} and Corollary~\ref{thm:3-point_invariants_QK_equals_Quot}
that the quantum reduction map $\kappa$ preserves the pairings, i.e.,
\[
\langle\!\langle
\mathrm{V}
,
\mathrm{W}
\rangle\!\rangle^{\quot}_0
=
\langle\!\langle
\kappa(\mathrm{V})
,
\kappa(\mathrm{W})
\rangle\!\rangle_{0,2}
\]
for $\mathrm{V},\mathrm{W}\in R(\GLr)$.
\end{rem}
\subsection{Ring presentation for $\qk(X)$}
The product structure in quantum $K$-ring $\qk(X)$ can be derived from the ring structure of the representation ring $R(\GLr)$, i.e., Littlewood-Richardson rule, and the quantum reduction map $\kappa$ defined in Section~\ref{sec:quantum-reduction}:
\begin{equation*}
\ca{O}_\lambda\bullet\ca{O}_\mu = \kappa(\mathrm{O}_\lambda)\bullet \kappa(\mathrm{O}_{\mu}) = \sum_{\nu\in \P_{r,2k}}c_{\lambda,\mu}^{\nu}\kappa(\mathrm{O}_{\nu}),
\end{equation*}
where $c_{\lambda,\mu}^{\nu}$ are the $K$-theoretic Littlewood-Richardson coefficients. Using this strategy, we obtain a close formula for the structure constants of
$\qk(\mathrm{Gr}(2,N))$ in \cite{SinhaZhang2}.

We now use our technique to give an alternate proof of the quantum $K$-ring relations in~\cite{GMSZ2}.
Let  $0\to S\to  \ca{O}_X^{\oplus N}\to  Q\to 0 $ denote the universal sequence on $\grass$.
Note that $\det(Q)=\det(S^\vee)$. Using the classical orthogonality in \eqref{eq:classic-dual} and the fact that $\chi(X,\O_\nu)=1$ for all $\nu\in \P_{r,k}$,
one can show that
\begin{equation}
	\label
	{eq:det-quotient}
	\det(S^\vee)
	=
	\sum_{\lambda\in\P_{r,k}} \O_{\lambda}\in K(X).
\end{equation}

\begin{prop}
\label
{prop:inverse-det-kappa-image}
\[
\kappa(
\det(\mathrm{S}^{\vee})
)
=\frac{\det(Q)}{1-q}.
\]
\end{prop}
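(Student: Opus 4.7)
The plan is to compute the coefficient of each Schubert class $\ca{O}_\alpha$ in $\kappa(\det\mathrm{S}^\vee)$ directly from the definition, reduce it to a one-pointed Quot-scheme invariant, and then evaluate the latter using the vanishing theorem.

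First, by the definition of the quantum reduction map,
\[
\kappa(\det\mathrm{S}^\vee) = \sum_{\alpha \in \P_{r,k}} \langle\!\langle \det\mathrm{S}^\vee, \mathrm{O}_\alpha^* \rangle\!\rangle^{\quot}_0 \, \ca{O}_\alpha.
\]
Using $\mathrm{O}_\alpha^* = \mathrm{O}_{\alpha^*} \cdot \det\mathrm{S}$ in $K(\fX)$, the determinant factors cancel inside the class, giving $\det\mathrm{S}^\vee \cdot \mathrm{O}_\alpha^* = \mathrm{O}_{\alpha^*}$. Hence the coefficient of $\ca{O}_\alpha$ equals the one-pointed series $\sum_{d \geq 0} q^d \chi(\quot_d(\bP^1, N, r), \widetilde{\ca{O}}_{\alpha^*})$, and it suffices to show each such Euler characteristic equals $1$.

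Next, I would expand $\widetilde{\ca{O}}_{\alpha^*} = \sum_{\nu \in \P_{r,k}} \mathrm{A}_{\alpha^*}^\nu\, \bS^\nu(\ca{S}_p)$ via Definition~\ref{def:structure_sheaf_quot} and apply part (i) of Theorem~\ref{thm:vanishing}. For every nonempty $\nu \in \P_{r,k}$ one has $\nu_1 \leq k = N-r$, so the condition $d \geq \nu_1 - (N-r)$ holds automatically for every $d \geq 0$; the theorem then yields $\chi(\quot_d, \bS^\nu(\ca{S}_p)) = 0$. Only the $\nu = \emptyset$ term survives, so
\[
\chi(\quot_d, \widetilde{\ca{O}}_{\alpha^*}) = \mathrm{A}_{\alpha^*}^\emptyset \cdot \chi(\quot_d, \ca{O}_{\quot_d}).
\]

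To pin down the two scalars on the right, I would specialize the display to $d = 0$, where $\quot_0 = X$ and $\widetilde{\ca{O}}_{\alpha^*}|_X = \ca{O}_{\alpha^*}$. Since $\chi(X, \ca{O}_\nu) = 1$ for every $\nu \in \P_{r,k}$ (the Kempf-vanishing fact already used in establishing~\eqref{eq:det-quotient}), and $\chi(X, \ca{O}_X) = 1$, this forces $\mathrm{A}_{\alpha^*}^\emptyset = 1$ for every $\alpha \in \P_{r,k}$. For $d \geq 1$ I would invoke the classical fact that $\quot_d(\bP^1, N, r)$ is smooth projective and admits a cell decomposition (Str{\o}mme), so $h^{0,i}(\quot_d) = 0$ for $i>0$ and $\chi(\quot_d, \ca{O}_{\quot_d}) = 1$. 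Combining these, $\chi(\quot_d, \widetilde{\ca{O}}_{\alpha^*}) = 1$ for all $d \geq 0$.

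Consequently every coefficient of $\ca{O}_\alpha$ in $\kappa(\det\mathrm{S}^\vee)$ equals $\sum_{d \geq 0} q^d = \tfrac{1}{1-q}$, and using the identity $\det Q = \sum_{\alpha \in \P_{r,k}} \ca{O}_\alpha$ from~\eqref{eq:det-quotient} assembles the answer into $\tfrac{\det Q}{1-q}$. The main non-formal input is the geometric fact that $\chi(\quot_d, \ca{O}_{\quot_d}) = 1$; beyond that, the argument is a direct unwinding of $\kappa$ followed by a single application of the vanishing theorem.
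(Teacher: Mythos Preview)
Your proof is correct and follows essentially the same route as the paper: cancel $\det\mathrm{S}^\vee$ against the $\det\mathrm{S}$ in $\mathrm{O}_\alpha^*$, reduce the coefficient of $\ca{O}_\alpha$ to the one-pointed series $\langle\!\langle \mathrm{O}_{\alpha^*}\rangle\!\rangle^{\quot}_0$, and then assemble using~\eqref{eq:det-quotient}. The paper simply asserts $\langle\!\langle \mathrm{O}_{\alpha^*}\rangle\!\rangle^{\quot}_0 = \tfrac{1}{1-q}$ at this point, whereas you unpack that step explicitly via Theorem~\ref{thm:vanishing}(i), the identity $\mathrm{A}_{\alpha^*}^\emptyset = 1$, and the external fact $\chi(\quot_d,\O)=1$ from Str{\o}mme's cell decomposition; this is a harmless elaboration rather than a different argument.
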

\begin{proof}

For each $\lambda\in \P_{r,k}$,
\[
\langle\!\langle
\det(\mathrm{S}^{\vee})
,
\mathrm{O}^*_{\lambda}
\rangle\!\rangle^{\quot}_0
=
\langle\!\langle
\det(\mathrm{S}^{\vee})
\cdot
\mathrm{O}_{\lambda^*}\cdot 
\det(\mathrm{S})
\rangle\!\rangle^{\quot}_0
=
\langle\!\langle
\mathrm{O}_{\lambda^*}
\rangle\!\rangle^{\quot}_0
=\frac{1}{1-q}.
\]
Then the proposition follows from~\eqref{eq:det-quotient} and the definition of $\kappa$.
\end{proof}
Recall that $\ca{R}^{r,\ell}$ denotes the subspace of the representation ring $R(\GLr)$ spanned by
$\{
\bS^{\lambda}(\mathrm{S}) 
\mid \lambda\in \P_{r,\ell}
\}$. We will use the property~\eqref{eq:LR_rule} of the Littlewood-Richardson rule several times below.
\begin{cor}\label{cor:quant_prod_Wedge^i_and_detQ}
	 For $0<i\le r$, the quantum $K$-product
\begin{equation*}
	\wedge^i S\bullet \det Q=
		(1-q)\wedge^i S\cdot \det Q.
\end{equation*}
\end{cor}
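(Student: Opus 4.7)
The idea is to use the ring homomorphism property of the quantum reduction map $\kappa$ (Theorem~\ref{thm:kappa-homo}) together with the explicit values of $\kappa$ on $\wedge^i\mathrm{S}$ and $\det\mathrm{S}^{\vee}$. By Proposition~\ref{prop:kappa-identity}, since $\wedge^i\mathrm{S}=\bS^{(1)^i}(\mathrm{S})\in\ca{R}^{r,k}$ (we are assuming $k\geq 1$), we have $\kappa(\wedge^i\mathrm{S})=\wedge^i S$. By Proposition~\ref{prop:inverse-det-kappa-image}, we have $\det Q=\kappa((1-q)\det\mathrm{S}^\vee)$. Applying the ring homomorphism property then yields
\[
\wedge^i S\bullet\det Q=\kappa(\wedge^i\mathrm{S})\bullet\kappa((1-q)\det\mathrm{S}^\vee)=(1-q)\,\kappa(\wedge^i\mathrm{S}\cdot\det\mathrm{S}^\vee),
\]
so the corollary reduces to the identity $\kappa(\wedge^i\mathrm{S}\cdot\det\mathrm{S}^\vee)=\wedge^i S\cdot\det Q$ in $\qk(X)$.

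\textbf{Key computation.} By the definition~\eqref{eq:kappa-map} of $\kappa$, I would expand
\[
\kappa(\wedge^i\mathrm{S}\cdot\det\mathrm{S}^\vee)=\sum_{\alpha\in\P_{r,k}}\langle\!\langle\wedge^i\mathrm{S}\cdot\det\mathrm{S}^\vee,\mathrm{O}^*_\alpha\rangle\!\rangle^{\quot}_0\,\ca{O}_\alpha.
\]
Using $\mathrm{O}^*_\alpha=\mathrm{O}_{\alpha^*}\cdot\det\mathrm{S}$ and $\det\mathrm{S}^\vee\cdot\det\mathrm{S}=1$ in $R(\GLr)$, the combined insertion simplifies to $\wedge^i\mathrm{S}\cdot\mathrm{O}_{\alpha^*}$, which eliminates the non-polynomial factor. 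The degree-zero term reduces to the classical Euler characteristic on $\Quot[0]\cong X$, namely $\chi(X,(\wedge^i S\cdot\det Q)\cdot\O^*_\alpha)$; summing over $\alpha$ and invoking the classical orthogonality $\chi(X,\O_\beta\cdot\O^*_\alpha)=\delta_{\alpha,\beta}$ recovers $\wedge^i S\cdot\det Q$ (expanded in the Schubert basis). It therefore suffices to prove that every higher-degree contribution $\langle\wedge^i\mathrm{S}\cdot\mathrm{O}_{\alpha^*}\rangle^{\quot}_{0,d}$ with $d\geq 1$ vanishes.

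\textbf{Main obstacle.} The core of the proof is the vanishing for $d\geq 1$. For this, I would expand $\mathrm{O}_{\alpha^*}=\bG^{\alpha^*}(\mathrm{S}^\vee)$ in the Schur basis via~\eqref{eq:schur-expansion}; each term is $\bS^\mu(\mathrm{S})$ with $\mu_1\leq(\alpha^*)_1\leq k$. Applying the Pieri rule to $\bS^{(1)^i}(\mathrm{S})\cdot\bS^\mu(\mathrm{S})$ shows that $\wedge^i\mathrm{S}\cdot\mathrm{O}_{\alpha^*}$ is a $\Z$-linear combination of $\bS^\nu(\mathrm{S})$ with $\nu_1\leq k+1$, and every such $\nu$ is nonempty because $i>0$. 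Part (i) of Theorem~\ref{thm:vanishing} gives $\langle\bS^\nu(\mathrm{S})\rangle^{\quot}_{0,d}=0$ whenever $d\geq\nu_1-k$, which is satisfied for all $d\geq 1$ since $\nu_1-k\leq 1$. This is the delicate step: the argument hinges on the bound $\nu_1\leq k+1$ combining precisely with the vanishing threshold in Theorem~\ref{thm:vanishing}(i). Assembling these pieces gives $\kappa(\wedge^i\mathrm{S}\cdot\det\mathrm{S}^\vee)=\wedge^i S\cdot\det Q$, and thus the desired formula $\wedge^i S\bullet\det Q=(1-q)\wedge^i S\cdot\det Q$.
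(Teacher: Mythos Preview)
Your proof is correct and follows essentially the same approach as the paper: both reduce the statement via the ring-homomorphism property of $\kappa$ and Proposition~\ref{prop:inverse-det-kappa-image} to showing $\kappa(\wedge^i\mathrm{S}\cdot\det\mathrm{S}^\vee)=\wedge^i S\cdot\det Q$, then use $\det\mathrm{S}^\vee\cdot\det\mathrm{S}=1$ to rewrite the relevant Quot-scheme insertions as $\wedge^i\mathrm{S}\cdot\mathrm{O}_{\alpha^*}\in\ca{R}^{r,k+1}$, and invoke Theorem~\ref{thm:vanishing}(i) for the vanishing when $d\geq 1$. Your version spells out the Pieri-rule justification for the bound $\nu_1\leq k+1$ and the degree-zero computation more explicitly, but the argument is the same.
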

\begin{proof}
Using Proposition~\ref{prop:inverse-det-kappa-image},	
$$\wedge^i S\bullet \det Q =\kappa(\wedge^i \mathrm{S})\bullet(1-q) \kappa(\det (\mathrm{S}^{\vee}))= (1-q)	\kappa(\wedge^i \mathrm{S}\cdot \det (\mathrm{S}^{\vee})).$$
To show that $\kappa(\wedge^i \mathrm{S}\cdot \det (\mathrm{S}^{\vee})) = \wedge^i S\cdot \det(S)^{-1}$, it suffices to prove that 
\begin{align*}
\langle
\wedge^i\mathrm{S},\det (\mathrm{S}^{\vee})
,
\mathrm{O}^*_{\nu}
\rangle^{\quot}_{0,d}=
\langle
\wedge^i\mathrm{S}
,
\mathrm{O}_{\nu^*}
\rangle^{\quot}_{0,d}= 0.
\end{align*}
for each $\nu\in \P_{r,k}$, $d\ge1$. Note that $\mathrm{O}_{\nu^*}\in \ca{R}^{r,k}$, thus $\wedge^i\mathrm{S}\cdot 
\mathrm{O}_{\nu^*} \in \ca{R}^{r,k+1}$.
Part (i) of Theorem~\ref{thm:vanishing} implies that $\langle \mathrm{V}\rangle^{\quot}_{0,d}=0$
for any $\mathrm{V}\in\ca{R}^{r,k+1}$ and $d\ge1$, thus completing the proof.
\end{proof}

We can express $\wedge^j Q$ in the Schur basis for $K(X)$ by extracting the coefficient of $y^j$ in
$$
\wedge_y  Q=\frac{\wedge_y(\ca{O}_X^{\oplus N})}{\wedge_y S}=\frac{(1+y)^N}{1+yS+y^2\wedge^2S+\cdots y^r\wedge^r S}.
$$
This coefficient is a polynomial in the exterior powers of $S$ of degree at most $j$.
Proposition~\ref{prop:kappa-identity} and \eqref{eq:LR_rule} imply that there exists a unique class $\mathrm{H}_j\in \ca{R}^{r,j}$ such that $\kappa(\mathrm{H}_j) = \wedge^j Q$.

\begin{cor}[\cite{GMSZ1}]\label{cor:QK_Coulomb}
	In $\qk(X)$, 
	\begin{align}\label{eq:QK-Whitney-Coulomb}
			\wedge_y S\bullet \wedge_y  Q &= (1+y)^N - qy^k(\wedge_yS-1)\cdot \det  Q.
	\end{align}
\end{cor}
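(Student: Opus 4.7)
The plan is to exploit the ring homomorphism property of the quantum reduction map $\kappa$ (Theorem~\ref{thm:kappa-homo}) together with Proposition~\ref{prop:kappa-identity}, which says that $\kappa$ acts as the classical Kirwan map on $\ca{R}^{r,k}$. The key observation driving the proof is that the only source of a quantum correction in $\wedge_y S \bullet \wedge_y Q$ is the top-degree term $\wedge^k Q = \det Q$; every lower-degree piece $\wedge^j Q$ is close enough to $\ca{R}^{r,k}$ after multiplying by $\wedge^i\mathrm{S}$ that $\kappa$ stays trivial. The correction at the top degree has already been computed in Corollary~\ref{cor:quant_prod_Wedge^i_and_detQ}.

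Concretely, writing $\wedge_y S = \kappa(\wedge_y\mathrm{S})$ and $\wedge_y Q = \kappa(\mathrm{H}(y))$ with $\mathrm{H}(y)=\sum_{j=0}^{k} y^j \mathrm{H}_j$ and $\mathrm{H}_j\in \ca{R}^{r,j}$, the ring homomorphism property gives
\[
\wedge_y S \bullet \wedge_y Q = \kappa\!\bigl(\wedge_y\mathrm{S}\cdot \mathrm{H}(y)\bigr).
\]
I would split this product by the degree of $y$ coming from the $\mathrm{H}(y)$ factor. For $0\le j\le k-1$ and $0\le i\le r$, property~\eqref{eq:LR_rule} yields $\wedge^i\mathrm{S}\cdot \mathrm{H}_j \in \ca{R}^{r,1+j}\subseteq \ca{R}^{r,k}$, so Proposition~\ref{prop:kappa-identity} forces
\[
\wedge^i S \bullet \wedge^j Q = \kappa(\wedge^i\mathrm{S}\cdot \mathrm{H}_j) = \wedge^i S\cdot \wedge^j Q,
\]
i.e.\ no quantum correction. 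Summing these contributions and applying the classical Whitney identity $\wedge_y S\cdot \wedge_y Q = (1+y)^N$ on $X$, this portion collapses to $\wedge_y S\cdot (\wedge_y Q - y^k\det Q) = (1+y)^N - y^k\,\wedge_y S\cdot \det Q$.

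For the remaining contribution at $j=k$, I would apply Corollary~\ref{cor:quant_prod_Wedge^i_and_detQ}: for $i\ge 1$ one has $\wedge^i S\bullet \det Q = (1-q)\wedge^i S\cdot \det Q$, while $1\bullet \det Q = \det Q$, so
\[
y^k\bigl(\wedge_y S\bullet \det Q\bigr) = y^k \det Q + y^k(1-q)(\wedge_y S - 1)\cdot \det Q.
\]
Adding the two pieces, the classical terms $-y^k \wedge_y S\cdot\det Q + y^k\det Q + y^k(\wedge_y S - 1)\cdot \det Q$ telescope to zero, leaving the quantum remainder $-qy^k(\wedge_y S-1)\cdot \det Q$, which is exactly the asserted identity. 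I expect no serious obstacle here: every ingredient has been established, and the argument is essentially a bookkeeping of where $\kappa$ is forced to be trivial versus the single nontrivial input from Corollary~\ref{cor:quant_prod_Wedge^i_and_detQ}.
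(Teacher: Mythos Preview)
Your proposal is correct and follows essentially the same argument as the paper: split into the cases $j<k$ (where $\wedge^i\mathrm{S}\cdot\mathrm{H}_j\in\ca{R}^{r,k}$ so $\kappa$ is trivial by Proposition~\ref{prop:kappa-identity}) and $j=k$ (handled by Corollary~\ref{cor:quant_prod_Wedge^i_and_detQ}), then combine. The only difference is that you spell out the telescoping explicitly, whereas the paper simply notes that combining the two cases yields the identity.
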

	\begin{proof}
	
	For $j<k$ and any $i$,
	$$\wedge^i S\bullet \wedge^j Q = \kappa(\wedge^i \mathrm{S}\cdot \mathrm{H}_j) = \wedge^i S \cdot \wedge^j Q.
	$$
	Here the last equality is the consequence of Proposition~\ref{prop:kappa-identity} and observing that $\wedge^i \mathrm{S}\cdot \mathrm{H}_j\in\ca{R}^{r,j+1}\subset \ca{R}^{r,k}$.
	When $j=k$, Corollary~\ref{cor:quant_prod_Wedge^i_and_detQ} implies 
	$$\wedge_y S\bullet \det  Q = \wedge_y S\cdot  \det  Q  - q(\wedge_yS-1)\cdot \det  Q. $$
	Combining the above two cases, we obtain the required identity \eqref{eq:QK-Whitney-Coulomb}.
	\end{proof}
\begin{cor}[\cite{GMSZ2}] 
	In $\qk(X)$, the `quantum $K$-Whitney' relations are satisfied:
	\begin{align*}
		\wedge_y S\bullet \wedge_y  Q=(1+y)^N-\frac{q}{1-q}y^{k}(\wedge_y S - 1)\bullet \det  Q.
	\end{align*}
\end{cor}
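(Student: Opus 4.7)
The plan is to deduce the quantum $K$-Whitney relation directly from the Coulomb branch presentation in Corollary~\ref{cor:QK_Coulomb} by converting the classical product $(\wedge_y S - 1)\cdot \det Q$ appearing on its right-hand side into a quantum product. The only input required beyond Corollary~\ref{cor:QK_Coulomb} is Corollary~\ref{cor:quant_prod_Wedge^i_and_detQ}, which asserts that $\wedge^i S \bullet \det Q = (1-q)\,\wedge^i S \cdot \det Q$ for every $1 \le i \le r$.

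First, I would expand $\wedge_y S - 1 = \sum_{i=1}^{r} y^i \wedge^i S$ and apply Corollary~\ref{cor:quant_prod_Wedge^i_and_detQ} termwise. Since $1-q$ is a unit in $\bb{Z}[[q]]$, the identity rearranges to
\[
\wedge^i S \cdot \det Q \;=\; \frac{1}{1-q}\,\wedge^i S \bullet \det Q
\qquad (1 \le i \le r).
\]
Summing against $y^i$ gives the clean identity
\[
(\wedge_y S - 1)\cdot \det Q \;=\; \frac{1}{1-q}\,(\wedge_y S - 1)\bullet \det Q
\]
in $\qk(X)[y]$.

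Second, I would substitute this identity into the Coulomb branch relation
\[
\wedge_y S \bullet \wedge_y Q \;=\; (1+y)^N - q y^k (\wedge_y S - 1)\cdot \det Q
\]
from Corollary~\ref{cor:QK_Coulomb}. The coefficient $q$ in front becomes $\tfrac{q}{1-q}$, yielding the stated quantum $K$-Whitney relation.

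There is no genuine obstacle at this stage: the heavy lifting has already been absorbed into Corollary~\ref{cor:QK_Coulomb} (which uses the homomorphism $\kappa$ and the vanishing Theorem~\ref{thm:vanishing}) and into Corollary~\ref{cor:quant_prod_Wedge^i_and_detQ} (which uses $\kappa(\det \mathrm{S}^{\vee}) = \det Q/(1-q)$ from Proposition~\ref{prop:inverse-det-kappa-image}). The remaining step is a single algebraic substitution, and the only thing to be mildly careful about is that Corollary~\ref{cor:quant_prod_Wedge^i_and_detQ} genuinely applies for every $i$ occurring in $\wedge_y S - 1$, i.e.\ for $i \ge 1$ and not $i = 0$, which is precisely why the subtraction of $1$ on both sides is essential.
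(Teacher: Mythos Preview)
Your proposal is correct and follows essentially the same approach as the paper: the paper's proof simply notes that Corollary~\ref{cor:quant_prod_Wedge^i_and_detQ} gives $(\wedge_y S - 1)\bullet \det Q = (1-q)(\wedge_y S - 1)\cdot \det Q$ and substitutes this into Corollary~\ref{cor:QK_Coulomb}. Your version spells out the termwise application and the inversion of $1-q$ a bit more explicitly, but the argument is the same.
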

\begin{proof}
		Corollary~\ref{cor:quant_prod_Wedge^i_and_detQ} is equivalent to \cite[Theorem 1.3]{GMSZ2}, and it states that $(\wedge_yS-1)\bullet \det Q =(1-q)(\wedge_yS-1)\cdot \det Q$. The result then follows from Corollary \ref{cor:QK_Coulomb}. 
\end{proof}

\subsection{Euler characteristic}

Let $\chi^q:\qk(X)\rightarrow \Z[[q]]$ denote the $\Z[[q]]$-linear extension of the Euler characteristic
that sends each Schubert structure sheaf $\O_\lambda$ to $\chi(X,\O_\lambda)=1$.

\begin{prop}
	Let $\mathrm{V}\in R(\GLr)\cong K(\fr{X})$ be the associated $K$-theory class.
	We have
	\[
	\langle\!\langle
	\mathrm{V}
	\rangle\!\rangle^{\quot}_0
	=
	\frac{
		1
	}
	{1-q}	\chi^{q}
	\left(X,
	\kappa(\mathrm{V})
	\right)
	.
	\]
	
\end{prop}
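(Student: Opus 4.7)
The plan is to pass from the one-point invariant to a two-point one by inserting the trivial class $1\in K(\fr{X})$, and then expand via the quantum reduction map $\kappa$ so that the computation reduces to the base-case identity $\langle\!\langle \mathrm{O}_{\alpha}\rangle\!\rangle^{\quot}_0 = \frac{1}{1-q}$ already established in the course of proving Proposition~\ref{prop:inverse-det-kappa-image}.

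More concretely, I would proceed as follows. First, since $\wev_{p}^{*}(1)=1$, we have the tautological identity
\[
\langle\!\langle \mathrm{V}\rangle\!\rangle^{\quot}_0 = \langle\!\langle \mathrm{V},1\rangle\!\rangle^{\quot}_0.
\]
Next, I apply Lemma~\ref{lem:kappa-trivial} with $\mathrm{W}=1\in K(\fr{X})$ to replace $\mathrm{V}$ by its image under $\kappa$:
\[
\langle\!\langle \mathrm{V},1\rangle\!\rangle^{\quot}_0 = \langle\!\langle \kappa(\mathrm{V}),1\rangle\!\rangle^{\quot}_0.
\]
Writing $\kappa(\mathrm{V})=\sum_{\alpha\in \P_{r,k}} c_{\alpha}\,\ca{O}_{\alpha}$ with $c_{\alpha}=\langle\!\langle \mathrm{V},\mathrm{O}_{\alpha}^{*}\rangle\!\rangle^{\quot}_0$, and extending to $K(\fr{X})$ by replacing $\ca{O}_{\alpha}$ with $\mathrm{O}_{\alpha}$ as in the statement of Lemma~\ref{lem:kappa-trivial}, I then use $\bb{Z}[[q]]$-linearity to obtain
\[
\langle\!\langle \kappa(\mathrm{V}),1\rangle\!\rangle^{\quot}_0 = \sum_{\alpha\in \P_{r,k}} c_{\alpha}\,\langle\!\langle \mathrm{O}_{\alpha}\rangle\!\rangle^{\quot}_0.
\]

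The crucial input now is that $\langle\!\langle \mathrm{O}_{\alpha}\rangle\!\rangle^{\quot}_0 = \tfrac{1}{1-q}$ for every $\alpha\in \P_{r,k}$, which is precisely the identity derived in the proof of Proposition~\ref{prop:inverse-det-kappa-image} (via the vanishing theorem applied to the three-point invariant $\langle \det(\mathrm{S}^{\vee}),\mathrm{O}_{\alpha^{*}},\det(\mathrm{S})\rangle^{\quot}_{0,d}$). Substituting this in gives
\[
\langle\!\langle \mathrm{V}\rangle\!\rangle^{\quot}_0 = \frac{1}{1-q}\sum_{\alpha\in \P_{r,k}} c_{\alpha}.
\]
Finally, since $\chi^{q}$ is $\bb{Z}[[q]]$-linear with $\chi^{q}(X,\ca{O}_{\alpha})=1$ for every $\alpha\in\P_{r,k}$, the right-hand sum equals $\chi^{q}(X,\kappa(\mathrm{V}))$, yielding the desired formula.

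No step appears to present a serious obstacle, as everything reduces to results already established in the excerpt. The only mildly subtle point is making sure that Lemma~\ref{lem:kappa-trivial} is legitimately applied with $\mathrm{W}=1$ (which is allowed since $1\in K(\fr{X})$ is the class of the trivial representation), and tracking the convention that $\kappa(\mathrm{V})\in \qk(X)$ is lifted to $\bb{Z}[[q]]\otimes K(\fr{X})$ by replacing each Schubert class $\ca{O}_{\nu}$ with its stacky lift $\mathrm{O}_{\nu}$, exactly as in the hypothesis of Lemma~\ref{lem:kappa-trivial}.
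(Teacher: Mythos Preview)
Your proof is correct and follows essentially the same route as the paper: apply Lemma~\ref{lem:kappa-trivial} (with $\mathrm{W}=1$) to replace $\mathrm{V}$ by $\kappa(\mathrm{V})$, expand $\kappa(\mathrm{V})$ in the Schubert basis, and then invoke $\langle\!\langle \mathrm{O}_\alpha\rangle\!\rangle^{\quot}_0=\tfrac{1}{1-q}$ together with $\chi(X,\O_\alpha)=1$. One small comment: the identity $\langle\!\langle \mathrm{O}_{\lambda^*}\rangle\!\rangle^{\quot}_0=\tfrac{1}{1-q}$ in the proof of Proposition~\ref{prop:inverse-det-kappa-image} is obtained simply because $\det(\mathrm{S}^\vee)\cdot\det(\mathrm{S})=1$, not via the vanishing theorem as you suggest; the equality itself is taken as known (ultimately from Theorem~\ref{thm:vanishing}(i) applied to the Schur expansion of $\mathrm{O}_\lambda$, together with $\chi(\Quot,\O)=1$).
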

\begin{proof}
	It folows from Lemma~\ref{lem:kappa-trivial} that
	$
	\langle\!\langle
	\mathrm{V}
	\rangle\!\rangle^{\quot}_0
	=
	\langle\!\langle
	\kappa(V)
	\rangle\!\rangle^{\quot}_0
	.	
	$
	If $\lambda\in \P_{r,k}$, then
	\[
	\langle\!\langle
	\mathrm{O}_\lambda
	\rangle\!\rangle
	^{\quot}_0
	=\frac{1}{1-q}
	=\frac{1}{1-q} \chi(X,\O_\lambda).
	\]
	We conclude the proof by noting that $\kappa(V)$ is a linear combination of $\mathrm{O}_{\lambda}, \lambda\in\P_{r,k}$
	with coefficients in $\bb{Z}[[q]]$.	
\end{proof}

\begin{rem}
	Consider the image of the quantized dual $\mathrm{O}_\lambda^*$ under the quantum reduction map
	$
	\kappa(\mathrm{O}_\lambda^*)
	=	\kappa(
	\det(\mathrm{S})
	\cdot
	\mathrm{O}_{\lambda^*}
	)
	=
	\det(S)\bullet
	\O_{\lambda^*}.
	$
	It was observed in~\cite[Theorem 5.15]{Buch-Mihalcea} that
	\[
	\frac{\det(S)}{1-q}\bullet\O_{\nu^*}
	\]
	is the $\Z[[q]]$-linear dual to $\O_\lambda$ with respect to $\chi^q$. Indeed,
	\begin{align*}
		\frac{1}{1-q}\chi^q(
		\O_\lambda\bullet \det(S)\bullet \O_{\nu^*}
		)
		&=\frac{1}{1-q}\chi^q\big(
		\kappa(
		\mathrm{O}_\lambda
		\cdot \mathrm{O}^*_{\nu})
		\big)=
		\langle\!\langle
		\mathrm{O}_\lambda
		\cdot
		\mathrm{O}_{\nu}^*
		\rangle\!\rangle^{\quot}_0 =\delta_{\lambda,\nu}.
	\end{align*}
	This can be viewed as a manifestation of the orthogonality relation in Proposition~\ref{prop:quantized-dual-basis}.
\end{rem}

\section{Quasimap wall-crossing}\label{sec:Quasimap wall-crossing}
	In this section, we prove that the quasimap invariants are independent of the stability parameter $\epsilon$. The proof relies on the $K$-theoretic $\epsilon$-wall-crossing formula in \cite[Theorem 1.3]{ZZ1}. In particular, we show that the correction term vanishes by calculating a certain $I$-function.

\subsection{$K$-theoretic $\epsilon$-stable quasimap invariants}

\begin{definition}
An $n$-pointed, genus-zero quasimap of degree $d$ to $X=\grass$ is a tuple
$$(
(C,\bp), E, s
)$$
where 
\begin{enumerate}[\normalfont(i)]
\item $(C,\bp)=(C,p_1,\dots,p_n)$ is a prestable $n$-pointed curve, i.e., a connected, at most nodal, projective curve of genus-zero, together with $n$ distinct and nonsingular markings on it.
\item $E$ is a vector bundle on $C$ of rank $r$ and degree $d$.
\item $s\in H^0(E\otimes\ca{O}^{\oplus N}_{C})$ represents $N$ sections which generically generate $E$.
\end{enumerate}
\end{definition}
A point $p\in C$ is called a \emph{base point} if $s$ does not span the fiber of $E$ at $p$. For any $\epsilon\in\Q_{>0}$, let $\epqm$ be the moduli stack of $\epsilon$-stable quasimaps to $X$ introduced in~\cite{Toda,CKM}.
It parametrizes tuples $((C,\bp),E,s)$ of quasimaps satisfying the following
\begin{enumerate}
\item The base points of $s$ are away from the nodes and the marked points.

\item $\det(E)^\epsilon\otimes \omega_{C}(\sum_{i=1}^{n} p_i)$ is ample,
\item for any point $p\in C$, the torsion subsheaf $\tau(Q)$ of $Q:=\mathrm{coker}(E^{\vee}\xrightarrow{s^{\vee}}\O_{C}^{\oplus N})$ satisfies
\[
\epsilon\cdot \operatorname{length}\tau (Q)_{p}\leq 1.
\]
\end{enumerate}

According to Theorem 1.1 and Lemma 2.15 of~\cite{Toda}, the moduli stack $\epqm$ is a smooth proper DM stack over $\C$.
It is empty unless $-2+n+\epsilon d>0$.
For a fixed degree $d$, the space $\Q_{>0}$ is divided into stability chambers by finitely many walls $\{1/d' \mid d\in \Z_{>0},d'\leq d\}$.
There are two extreme chambers $(2,\infty)$ and $(0,1/d]$, in which the stability conditions are denoted by
$\epsilon=\infty$ and $\epsilon=0+$, respectively.
In these two extreme chambers, we recover the moduli spaces of stable maps and \emph{stable quotients}, respectively, i.e., we have
\begin{align}\label{eq:stable_quotients}
\epqm &\cong \ms,\quad \epsilon >2,\nonumber
\\
\epqm & \cong  \stableq,
\quad
0<\epsilon \leq 1/d.
\end{align}
Here $\stableq$ is the moduli stack of stable quotients introduced in~\cite{MOP}.

Let
$
\ev_i:\epqm\rightarrow X,1 \leq i\leq n$ be the evaluation maps. The diagonal $T$-action on $\O_{ C}^{\oplus N}$ induces a $T$-action on $\epqm$.
\begin{definition}
We define the $T$-equivariant $K$-theoretic $\epsilon$-stable quasimap invariant with insertions $E_{1},\dots,E_{n}\in \kg(X)$ by
\[
\langle
E_{1},
\dots,
E_{n}
\rangle^{\epsilon,T}_{0,n, d}
:=\chi^{T}
\bigg(
\epqm,
\prod_{i=1}^{n}\ev_{i}^{*}(E_{i})
\bigg).
\]
\end{definition}

\subsection{$\epsilon$-wall-crossing}

\begin{theorem}
\label
{prop:epsilon-wall-crossing}
Suppose $n\geq 3$ or $n=2$ and $d>0$.
Then the $T$-equivariant $K$-theoretic $\epsilon$-stable quasimap invariants are independent of $\epsilon$, i.e.,
\[
\langle
E_{1},
\dots,
E_{n}
\rangle^{\epsilon,T}_{0,n, d}
=
\langle
E_{1},
\dots,
E_{n}
\rangle^{\epsilon',T}_{0,n, d}
\]
for any $\epsilon,\epsilon'\in\Q_{>0}$.
\end{theorem}
\begin{proof}
Let $\bt$ be a generic element in $\kg(X)$.
There is a natural $S_{n}$-action on $\epqm$ defined by permuting the $n$ marked points. Here $S_{n}$ denotes
the symmetric group on $n$ letters.
Consider the following (virtual) $(T\times S_{n})$-module
\[
[\bt,\dots, \bt]^{\epsilon,T
}_{0,n,d}
:=
\sum_{j\geq 0}
(-1)^{j}H^{j}
\bigg(\epqm,
\prod_{i=1}^{n} \ev_{i}^{*}(\bt)
\bigg).
\]

Let $\epsilon=1/d_{0}$ be a wall, where $d_{0}\leq d$. Let $\epsilon_{-}<\epsilon_{+}$ be the stability conditions in the two adjacent chambers separated by $\epsilon_{0}$.
Set $m=\lfloor d/d_{0}\rfloor$.
By the $K$-theoretic $\epsilon$-wall-crossing formula~\cite[Theorem 1.3]{ZZ1}\footnote{~\cite[Theorem 1.3]{ZZ1} is stated in the non-torus-equivariant setting, but it holds true in general and the proof is verbatim. The proof uses the master space constructed in \cite{Zhou} for quasimap wall-crossing for cohomological invariants.},
we have 
\begin{equation}
\label{eq:single-epsilon-wall-crossing}
[\bt,\dots, \bt]^{\epsilon_{-},T}_{0,n,d}
-
[\bt,\dots, \bt]^{\epsilon_{+},T}_{0,n,d}
=
\sum_{j=1}^{m}
[\bt,\dots, \bt,
\mu_{d_{0}}(L),\dots,\mu_{d_{0}}(L)
]_{0,n+j,d-jd_{0}}^{S_j,\epsilon_{+},T}
\end{equation}
as virtual $(T\times S_{n})$-modules.
Here
	\[
[\bt,\dots, \bt,
\mu_{d_{0}}(L),\dots,\mu_{d_{0}}(L)
]_{0,n+j,d-jd_{0}}^{S_j,\epsilon_{+},T}
:=
p_{*}
\bigg(
\prod_{i=1}^{n}
\ev_{i}^{*}(\bt)
\cdot
\prod_{a=1}^{j}
\ev_{n+a}^{*}
(\mu_{d_{0}}(\omega)
)|_{\omega=L_{n+a}}
\bigg),
\]
where $\mu_{d_{0}}(\omega)$ is a Laurent polynomial in $\omega$ defined by~\eqref{eq:I-function-truncation} with coefficient in  $\kg(X)$, $L_{n+a}$ is the cotangent line bundle at the $(n+a)$-th marked point, $\ev^*_{n+a}(\mu_{d_0}(\omega))$ is then specialized by setting the formal variable ${w=L_{n+a}}$ to obtain an equivariant $K$-theory class on $Q^{\epsilon_{+}}_{0,n+j}(X,d-jd_{0})$, and
\[
p: [Q^{\epsilon_{+}}_{0,n+j}(X,d-jd_{0})/(T\times S_{j}))]\rightarrow BT.
\]
By Lemma~\ref{lem:laurent-part}, we have $\mu_{d}(\omega)=0$, and therefore the right-hand side of~\eqref{eq:single-epsilon-wall-crossing} vanishes.
Hence $[
	\bt,
	\dots,
	\bt
	]^{\epsilon,T}_{0,n, d}$ is independent of $\epsilon$.

Now set $\bt=\sum_{i=1}^n \alpha_iE_i$ where $\alpha_i$'s are formal variables. Then the coefficient of $\prod_{i=1}^{n}\alpha_{i}$ in the character of the virtual $T$-module $[\bt,\dots,\bt]^{\epsilon,T}_{0,n, d}$ is given by 
\[
	n!\cdot \langle
E_1,\dots, E_n
\rangle^{\epsilon,T}_{0,n,d}\in \Gamma.
\]
This concludes the proof.
\end{proof}

\subsection{$K$-theoretic $I$-function and its truncations}
\label
{subsec:I-function}

To define the Laurent polynomial $\mu_{d}(\omega)$, we refer to the quasimap graph space
$QG^{\epsilon=0+}_{0,n}(X,d)$ as introduced in~\cite[\textsection{7.2}]{CKM},
along with the $\mathbb{C}^*$-action on $QG^{\epsilon=0+}_{0,n}(X,d)$
induced from a $\mathbb{C}^*$-action on $\bP^1$.
The relevant calculations of the fixed loci are detailed in \cite{bck} and \cite[Appendix]{RZ2}. 
We will summarize these calculations to prove the vanishing of $\mu_{d}(\omega)$
as outlined in Lemma \ref{lem:laurent-part}.

\begin{definition}
The $(\epsilon=0+)$-stable quasimap graph space, denoted $QG^{\epsilon=0+}_{0,n}(X,d)$, parameterizes tuples
$$(
(C,\bp), E, s,\varphi
)$$
where $\varphi:C\to \bP^1$ is a degree one map, $((C,\bp), E, s)$ is an $n$-pointed genus-zero quasimap of degree $d$ to $X$, and it satisfies the following stability conditions:
	\begin{enumerate}[\normalfont(i)]
	\item The base points of $s$ are away from the nodes and the marked points.
	\item The line bundle 
	\[
	\det(E)^\epsilon\otimes \omega_{C}\big(\sum_{i=1}^{n} p_i\big)
	\otimes \varphi^*(\omega_{\bb{P}^{1}}^{-1}\otimes M),
	\]
	is ample for every sufficiently small rational numbers $0<\epsilon<\!\!<1$, where $M$ is any ample line bundle on $\bP^{1}$.
\end{enumerate}
\end{definition}

Let $\pi:\ca{C}\rightarrow QG^{\epsilon=0+}_{0,n}(X,d)$ be the universal curve and let 
\[
0\rightarrow
\ca{S}
\rightarrow
\ca{O}^{\oplus N}_{\ca{C}}
\rightarrow
\ca{Q}
\rightarrow 
0
\]
be the universal exact sequence over $\ca{C}$. Here $\ca{S}=\ca{E}^\vee$, where $\ca{E}$ is the universal vector bundle. The higher direct image
$R^{1}\pi_{*}
\left(
\Hom(\ca{S},\ca{Q})
\right)
$
vanishes because over each component of a geometric fiber $C$ of $\pi$,
the bundle $\Hom(\ca{S},\ca{Q})|_{C}$ splits into a direct sum of line bundles of nonnegative degrees.
Hence $\pi_{*}
\left(
\Hom(\ca{S},\ca{Q})
\right)
$
is a vector bundle on $QG^{\epsilon=0+}_{0,n}(X,d)$.
Let $\Phi:
\ca{C}\rightarrow \bP^{1}$ be the universal parametrization map and let $\ca{D}_{i}\subset \ca{C}$ be the divisor corresponding to the $i$-th marked point.
According to the deformation theory of quasimaps
(see, e.g.,~\cite[\textsection{6.2}]{RZ2},~\cite[\textsection{3.2}]{MOP},~\cite[\textsection{2.4}]{Toda}),
the moduli stack $QG^{\epsilon=0+}_{0,n}(X,d)$ is smooth, and its tangent bundle fits into the following exact sequence
\begin{multline*}
0
\rightarrow
\pi_{*}
\bigg(\Omega_{\pi}^{\vee}
\bigg (-\sum_{i=1}^{n}
\ca{D}_{i}
\bigg)
\bigg)
\rightarrow
\pi_{*}
\left(
\Hom(\ca{S},\ca{Q})
\right)
\oplus 
\pi_{*}
(
\Phi^{*}
(
T_{\bP^{1}}
)
)
\\
\rightarrow
T_{QG^{\epsilon=0+}_{0,n}(X,d)}
\rightarrow
R^{1}\pi_{*}
\bigg(\Omega_{\pi}^{\vee}
\bigg (-\sum_{i=1}^{n}
\ca{D}_{i}
\bigg)
\bigg)
\rightarrow
0.
\end{multline*}

Let $T=(\C^{*})^{N}$ be the diagonal torus acting on $QG^{\epsilon=0+}_{0,n}(X,d)$.
Consider the $\C^{*}$-action on $\bP^{1}$ defined by
\[
t[\zeta_{0},\zeta_{1}]
=
[t\zeta_{0},\zeta_{1}],
\quad t\in\C^{*}.
\]
This induces a $\C^{*}$-action on $\QGS$
which commutes with the $T$-action.
Let $\omega$ denote the character of this $\mathbb{C}^*$-action. We denote the unique marking by $x_{\bullet}$.
Set $0:=[1:0]$ and $\infty:=[0:1]$.
Let $F_{d}\subset \QGS$ be the distinguished fixed-point component in $\QGS$ parametrizing $\C^{*}$-fixed quasimaps such that $x_{\bullet}$ lies at $\infty$ and the only base point of $s$ is at $0$.

The graph space $\QGS$ and its distinguished $\C^{*}$-fixed-point component $F_{d}$ are both smooth. The normal bundle is given by the moving part (i.e., the part with non-zero weight) of the restriction of the tangent bundle $T_{QG^{\epsilon=0+}_{0,n}(X,d)}$ to the fixed locus $F_d$, and it decomposes into two parts:
	\[
	N := N_{F_d/\QGS} = L \oplus N^{\mathrm{rel}}.
	\]
\begin{enumerate}[\normalfont(i)]
	\item $L$ is a rank-1 part coming from deforming $x_{\bullet}$ away from $\infty\in\bP^{1}$. Note that $L$ is a trivial line bundle, with equivariant $K$-theoretic Euler class equal to pullback of the tangent space at $\infty\in\bP^1$, which is given by $1 - \omega^{-1}$;
	\item $N^{\rm rel}:=N^{\rm rel}_{F_{d}/\QGS}$ is the moving part of the vector bundle $\pi_{*}
	\left(
	\Hom(\ca{S},\ca{Q})
	\right)
	$.
\end{enumerate}

Let $\ev_{\bullet}:\QGS\rightarrow X\times \bP^{1}$ be the evaluation map at the marking $x_{\bullet}$.
Define $\widetilde{\ev}_{\bullet}:=\pi_{X}\circ \ev_{\bullet}$, where $\pi_{X}:X\times \bP^{1}\rightarrow X$ is the projection.
For $d>0$, we recall the definition of the $I$-function, see for example \cite{ZZ1},
\[
\tilde{I}_{d}(\omega)
:=
(1-\omega)(1-\omega^{-1})
(\wev_{\bullet})_*
\bigg(\frac{
	1
}
{\wedge_{-1}^{T\times\bb{C}^*}(N^\vee)}\bigg)
=
(1-\omega)
(\wev_{\bullet})_*
\bigg(\frac{
	1
}
{\wedge_{-1}^{T\times\bb{C}^*}((N^{\rm rel})^\vee)}\bigg).
\] 
This is a rational function in $\omega$ with coefficients in $\kg(X)$. Any rational function $f(x)$ admits the partial fraction decomposition of the form $f(x)=h(x)+A(x)/B(x)$, where $h(x)$ is a Laurent polynomial part, and $A,B$ are polynomials with $\deg A<\deg B$ and $B(0)\ne 0$. Let 
	\begin{equation}
	\label{eq:I-function-truncation}
	\mu_{d}(\omega)
	:=
	\big[
	\tilde{I}_{d}(\omega)
	\big]_{+}
	\end{equation}
	denote the Laurent polynomial part in the partial fraction decomposition of the rational function $\tilde{I}_{d}(\omega)$.
	
The distinguished $\C^{*}$-fixed-point component $F_{d}$ is explicitly identified in~\cite{bck} (see also \cite{RZ2}).
The irreducible components of $F_{d}$ are indexed by partitions $(d):=(d_1,d_{2},\dots, d_{r})$ of $d$ satisfying
\begin{equation*}
\label{eq:partition-d}
\sum d_i=d\quad\text{and}\quad0\leq d_1\leq d_2\leq\dots\leq d_r.
\end{equation*}
Let $\{r_i\}_{1\leq i\leq j}$ be the jumping index of $\{d_i\}$, i.e.,
$0\leq d_1=\dots=d_{r_1}<d_{r_1+1}=\dots=d_{r_2}<\dots<d_{r_{j}+1}=\dots=d_{r}$.
The (restriction of the) evaluation map $\widetilde{\ev}_{\bullet}: F_{d}\rightarrow X$ can be identified with the flag bundle map
\[
\rho:
{\rm Fl}(r_{1},\dots, r_{j};S)
\rightarrow X,
\]
where ${\rm Fl}(r_{1},\dots, r_{j};S)
$ denote the relative flag variety of type $\{r_{i}\}$.
Let
\[
0\subset S_{r_1}\subset S_{r_2}\subset\dots\subset  S_{r_j}\subset S_{r_{j+1}} =\rho^*S
\]
be the universal flag on ${\rm Fl}(r_{1},\dots, r_{j};S)
$. 
Using the splitting principle, we write 
\[
\sum_{s=1}^{r_a-r_{a-1}}\ca{L}_{r_{a-1}+s}=(S_{r_a}/S_{r_{a-1}})^\vee.
\]
Set $\bar{r}_a:=r_a-r_{a-1}$ and $d_{ba}:=d_{r_b}-d_{r_a}$ with $r_{0}=0$.

According to the computations detailed in~\cite[Appendix A]{RZ2}, which build on the analysis in~\cite{bck}
,
the following explicit formula is derived:
\begin{equation}
\label
{eq:I-d-q}
\tilde{I}_{d}(\omega)
=\sum_{(d)}
\sum_{\sigma
}
(1-\omega)
\cdot
\sigma(I_{(d)})
\end{equation}
where the permutation $\sigma$ runs through all elements in
$S_r/(S_{r_1}\times\dots\times S_{r_{j+1}})$ and acts on the indices of $\ca{L}_{i}$'s in 
\begin{equation*}
\begin{aligned}
I_{(d)}
:=
&\prod_{1\leq a<b\leq j+1}
\prod_{\substack{1\leq s\leq \bar{r}_a\\1\leq t\leq \bar{r}_b}}
\prod_{c=1}^{d_{ba}-1}\ca{L}^\vee_{r_{b-1}+t}\ca{L}_{r_{a-1}+s}
\omega^{c}\\
&\cdot
\prod_{1\leq a<b\leq j+1}
\prod_{\substack{1\leq s\leq \bar{r}_a\\1\leq t\leq \bar{r}_b}}
(-1)^{\bar{r}_a\bar{r}_b(d_{ba}-1)}
\frac{1-\ca{L}_{r_{b-1}+t}^\vee\ca{L}_{r_{a-1}+s}
	\omega^{d_{ba}}}
{1-\ca{L}_{r_{b-1}+t}^\vee\ca{L}_{r_{a-1}+s}}
\\
&\cdot
\frac{
	1}
{\prod_{a=1}^{j+1}\prod_{s=1}^{\bar{r}_a}\prod_{b=1}^{d_{r_a}}\prod_{e=1}^{N}
	(1-\ca{L}^\vee_{r_{a-1}+s}\alpha_{e}^{-1}\omega^b)}.
\end{aligned}
\end{equation*}
The substitution
of $\prod_{k=1}^{N}
(1-\ca{L}^\vee_{r_{a-1}+s}\alpha_{k}^{-1}q^b)$ in place of $
(1-\ca{L}^\vee_{r_{a-1}+s}q^b)^{N}$ in the non-torus-equivariant $I$-function on~\cite[p. 300]{RZ2} corresponds to taking into account the $T$-action on the trivial bundle $\C^{N}$ in the Euler sequence on page 298 of~\cite{RZ2}.

\begin{lemma}
	\label{lem:laurent-part}
	The rational function $\tilde{I}_{d}(\omega)$ is
	regular at $\omega=0$ and vanishes at $\omega=\infty$.
	As a consequence, we have $\mu_{d}(\omega)=
	[
	\tilde{I}_{d}(\omega)
	]_{+}=0$.
	
\end{lemma}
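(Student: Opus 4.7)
The plan is to analyze the explicit formula~\eqref{eq:I-d-q} for $\tilde{I}_d(\omega)$ and check the two claimed properties separately; the conclusion $\mu_d(\omega) = 0$ then follows because the Laurent polynomial part of the partial fraction decomposition of a rational function in $\omega$ is exactly the sum of its polynomial part at $\omega = \infty$ and its strictly negative-power terms at $\omega = 0$.

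\emph{Regularity at $\omega = 0$.} I would inspect the factors of $I_{(d)}$. All numerator pieces, namely $\prod_{c=1}^{d_{ba}-1}\mathcal{L}^\vee_{r_{b-1}+t}\mathcal{L}_{r_{a-1}+s}\omega^c$ and $(1-\mathcal{L}^\vee\mathcal{L}\omega^{d_{ba}})$, are polynomial in $\omega$ and hence regular at $\omega = 0$. The denominator factors come in two flavors: the $\omega$-independent terms $(1-\mathcal{L}_{r_{b-1}+t}^\vee \mathcal{L}_{r_{a-1}+s})$, which contribute no $\omega$-pole, and the factors $(1-\mathcal{L}^\vee_{r_{a-1}+s}\alpha_e^{-1}\omega^b)$ with $b\geq 1$, which specialize to $1$ at $\omega=0$. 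Together with the regular prefactor $(1-\omega)$, this shows that $\tilde{I}_d(\omega)$ is regular at $\omega = 0$.

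\emph{Vanishing at $\omega = \infty$.} Set $n_a := \bar{r}_a$ and $f_a := d_{r_a}$. I would compute the $\omega$-degree of each summand in~\eqref{eq:I-d-q} factor by factor: the numerator pieces $\prod_{c=1}^{d_{ba}-1}\omega^c$ and $(1-\mathcal{L}^\vee\mathcal{L}\omega^{d_{ba}})$, each appearing $n_a n_b$ times, contribute a combined degree $\sum_{a<b} n_a n_b \binom{d_{ba}+1}{2}$, while the denominator contributes $N\sum_a n_a \binom{f_a+1}{2}$. Including the prefactor $(1-\omega)$, the total $\omega$-degree of $\tilde{I}_d(\omega)$ is
$$
1 + \sum_{a<b} n_a n_b \binom{d_{ba}+1}{2} - N\sum_a n_a \binom{f_a+1}{2}.
$$
Using $d_{ba} = f_b - f_a \leq f_b$ together with $\sum_{a<b} n_a = r_{b-1} \leq r - n_b$, I would bound the middle sum by
$$
\sum_{a<b} n_a n_b \binom{d_{ba}+1}{2} \leq \sum_b n_b (r - n_b) \binom{f_b+1}{2},
$$
so the total degree is at most $1 - \sum_a n_a \binom{f_a+1}{2}(N - r + n_a)$. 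A short case analysis, according to whether $f_1 = 0$ or $f_1 \geq 1$ and using $d = \sum n_a f_a > 0$ together with $N \geq r$, shows that this second sum is $\geq 2$ in the nontrivial Grassmannian range $k = N - r \geq 1$, forcing the total degree to be $\leq -1$.

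The main obstacle is this final inequality: the numerator and denominator exhibit comparable quadratic growth in the $f_a$'s, and the slack needed to beat the $+1$ contribution from the $(1-\omega)$ prefactor only emerges after carefully exploiting $N \geq r$ together with the existence of at least one strictly positive $f_a$. Once both claims are in place, the partial fraction decomposition of $\tilde{I}_d(\omega)$ has neither negative-power terms at $\omega = 0$ nor polynomial terms at $\omega = \infty$, yielding $\mu_d(\omega) = [\tilde{I}_d(\omega)]_+ = 0$.
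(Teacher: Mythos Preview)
Your proposal is correct and follows essentially the same approach as the paper: both arguments verify regularity at $\omega=0$ by inspection of the explicit formula~\eqref{eq:I-d-q}, then compare the $\omega$-degrees of numerator and denominator using the bounds $d_{ba}\le f_b$ and a sum constraint on the $\bar r_a$'s. Your intermediate bound $\sum_{a<b}n_a = r_{b-1}\le r-n_b$ is marginally sharper than the paper's $\sum_{a<b}\bar r_a \le r-1$, but both lead to the same conclusion; the case analysis you outline is slightly more elaborate than necessary (since $N-r+n_a \ge k+1 \ge 2$ directly gives the sum $\ge 2$), but the argument is sound.
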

\begin{proof}
	The explicit description of \eqref{eq:I-function-truncation} above immediately implies that $\tilde{I}_d(\omega)$ is regular at $\omega = 0$. To show that the rational function vanishes at $\omega=\infty$, we estimate the degrees of the numerator and denominator of the terms in~\eqref{eq:I-d-q}.
	
	We fix a partition $(d)$ of $d$ and a permutation $\sigma$. Since $k>0$,
	the degree of the numerator of $(1-\omega)
	\cdot
	\sigma(I_{(d)})$ in $\omega$ is bounded by
	\begin{equation}
	\label{eq:numerator}
	1+\sum_{1\leq a<b\leq j+1}
	\frac{\bar{r}_a\bar{r}_b(d_{ba}-1)d_{ba}}{2}
	+\sum_{1\leq a<b\leq j+1}\bar{r}_a\bar{r}_bd_{ba}
	=1+\sum_{1\leq a<b\leq j+1}
	\frac{\bar{r}_a\bar{r}_b(d_{ba}+1)d_{ba}}{2}
	.
	\end{equation}
	Note that the first summation is absent in the case $r=1$. The degree of the denominators of~\eqref{eq:I-d-q} in $\omega$ is 
	\begin{equation}
	\label{eq:denominator}
	\sum_{a=1}^{j+1}
	\frac{\bar{r}_a(d_{r_a}+1)d_{r_a}N}{2}.
	\end{equation}
	By using
	$
	\sum_{a<b}\bar{r}_{a}\leq r-1
	$
	and
	$0<d_{ba}\leq d_{r_{b}}$,
	we obtain the following upper bound for the first term of the RHS of~\eqref{eq:numerator}:
	\begin{align*}
	\sum_{1\leq a<b\leq j+1}
	\frac{\bar{r}_a\bar{r}_b(d_{ba}+1)d_{ba}}{2}
	\leq
	\sum_{1\leq a<b\leq j+1}
	\frac{\bar{r}_{a}\bar{r}_{b}(d_{r_{b}}+1)d_{r_{b}}}{2}
	\leq
	\frac{r-1}{2}\sum_{b=1}^{j+1}\bar{r}_{b}(d_{r_{b}}+1)d_{r_{b}}.
	\end{align*}
	It follows that the difference $\eqref{eq:denominator}-\eqref{eq:numerator}$ is greater than or equal to
	\begin{equation}
	\label{eq:last-lower-bound}
	\frac{N-(r-1)}
	{2}
	\sum_{a=1}^{j+1}\bar{r}_a
	(d_{r_a}+1)d_{r_a}
	-1>0.
	\end{equation}

\end{proof}

\section{Light-to-heavy wall crossing}\label{sec:Light-to-heavy wall crossing}
In this section, we derive a light-to-heavy wall-crossing
result for $K$-theoretic quasimap invariants in all genera using the
tools from~\cite[\textsection{6}]{RZ2}.
We then apply this result specifically to the genus-zero case to complete the proof of Theorem~\ref{thm:intro-Wall_crosing}.
\subsection{$K$-theoretic $(0+)$-stable graph quasimap invariants}\label{sec:stable graph quasimap invariants}
We fix a smooth curve $(C,\mathbf{q})=(C,q_1,\dots,q_n)$ of genus $g$ with $n$ distinct markings.
In this subsection, we consider quasimaps with one parametrized component isomorphic to $C$ and their moduli space.

\begin{definition}
An $n$-pointed, genus $g$ quasimap with one parametrized component of degree $d$ to $X=\grass$ is a tuple
$$(
(\widehat{C},\bp), E, s,\varphi
)$$
where 
\begin{enumerate}[\normalfont(i)]
\item $((\hc,\bp), E, s)$ is an $n$-pointed, genus $g$ quasimap of degree $d$ to $X$.
\item $\varphi: \hc\rightarrow C$ is a morphism of degree one such that 
\begin{equation}
\label{eq:bubbling}
\varphi(p_{i})=q_{i},
\quad1\leq i\leq n
.
\end{equation}
\end{enumerate}

\end{definition}

By definition, $\hc$ has an irreducible component $\hc_0$ such that $\varphi|_{\hc_0}:\hc_0\xrightarrow{\sim} C$ is an isomorphism, and the rest of $\hc$ is contracted by $\varphi$.
We refer to $\hc_{0}$ as the parametrized component.
An isomorphism between two quasimaps $((\hc,\bp), E, s,\varphi)$ and $((\hc',\bp'), E', s',\varphi')$ consists of an isomorphism $f:\hc\xrightarrow{\sim} \hc'$ of the underlying curves, along with an isomorphism $\sigma:E\rightarrow f^*E'$, such that
\[
f(p_i)=p'_i,\ \varphi=\varphi'\circ f,\,\text{and}\ \sigma(s)=f^*(s').
\]

\begin{definition}
\label{def:stability-quasimaps}
A quasimap $((\hc,\bp), E, s,\varphi )$ with one parametrized component is $(0+)$-stable if the following two conditions hold:
\begin{enumerate}[\normalfont(i)]
\item The base points of $s$ are away from the nodes and the markings.
\item The line bundle 
\[
\det(E)^\epsilon\otimes \omega_{\hc}\big(\sum_{i=1}^{n} p_i\big)
\otimes \varphi^*(\omega_{C}^{-1}\otimes M),
\]
is ample for every sufficiently small rational numbers $0<\epsilon<\!\!<1$, where $M$ is any ample line bundle on $C$.
\end{enumerate}
\end{definition}
For a $(0+)$-stable tuple $((\hc,\bp), E, s,\varphi )$, the source curve $\hc$ is obtained from the parametrized component $\hc_{0}$ by adding to $q_{i}$, for all $i$, a (possibly empty) chain of rational curves with $p_{i}$ on the extremal components,
and $(E,s)$ induces a rational map from $\hc$ to $X$ that is regular at the nodes and the markings $p_{i}$.
We refer the reader to~\cite[Fig. 2]{RZ2} for an illustration of the source curve $\hc$.

Let $\qgs$ denote the moduli stack of $(0+)$-stable quasimaps with one parametrized component
as defined in Definition~\ref{def:stability-quasimaps}.
It forms a closed substack of the moduli stack $\mathrm{Qmap}_{g,n}(X,d;C)$ defined in~\cite[\textsection{7.2}]{CKM},
which parametrizes the data from Definition~\ref{def:stability-quasimaps} without the
condition~\eqref{eq:bubbling}.
According to~\cite[Theorem 7.2.2]{CKM}, $\mathrm{Qmap}_{g,n}(X,d;C)$ is a proper DM stack,
implying that $\qgs$ is also a proper DM stack.

Let $\ca{S}$ and $\ca{Q}$ be the universal subbundle and quotient sheaf on the universal curve $\pi:\ca{C}\ra\qgs$.
The complex $R\shom_\pi (\ca{S},\ca{Q})$ defines a ($T$-equivariant) POT for $\nu:\qgs\ra\ca{T}_{\rm maps}(C,\mathbf{q})$,
where $\ca{T}_{\rm maps}(C,\mathbf{q})$ denotes the smooth Artin stack parametrizing
the data $((\widehat{C},\bp),\varphi))$ satisfying~\ref{eq:bubbling}.
Define
\[
\ovir_{\qgs}:=\nu^!(\ca{O}_{\ca{T}_{\rm maps}(C,\mathbf{q})})\in \kG(\qgs).
\]
Let $
\ev_i:\qgs\rightarrow X,
1 \leq i\leq n
$ be the evaluation morphism.

\begin{definition}
Let $E_{1},\dots,E_{n}\in \kg(X)$.
We define the
$T$-equivariant $K$-theoretic $(0+)$-stable graph quasimap invariant with the insertions $E_{i}$'s by
\[
\langle
E_{1},
\dots,
E_{n}
\rangle^{0+,T}_{g,d}
:=\chi^{\vir,T}
\bigg(
\qgs,
\prod_{i=1}^{n}\ev_{i}^{*}(E_{i})
\bigg).
\]
\end{definition}

\subsection{Light-to-heavy wall-crossing}

In this subsection, we use the setup from~\cite[\textsection{6.2}]{RZ2} to derive a light-to-heavy wall-crossing
formula
that connects the $K$-theoretic Quot scheme invariants
with the $K$-theoretic $(0+)$-stable graph quasimap invariants. Let $m$ be an integer such that $1\leq m\leq n$. We relabel the first $m$ markings as $x_1,\dots, x_m$ and the last $n-m$ markings as $y_1,\dots,y_{n-m}$.
Set $\bx=(x_1,\dots,x_m)$ and $\by=(y_1,\dots,y_{n-m})$.
\begin{definition}
	\label{def6.7}
A quasimap $((\hc,\bx;\by), E, s,\varphi)$
with one parametrized component to $X$ is called $(0+,0+)$-stable with $(m,n-m)$-weighted markings and degree $d$ if :
\begin{enumerate}[\normalfont(i)]
\item The quasimap only has finitely many base points which are away from the nodes.
\item $x_1,\dots,x_m$ are not base points of $s$.
\item The $\bb{Q}$-line bundle 
\[
\det(E)^\epsilon\otimes \omega_{\hc}\bigg(\sum_{i=1}^{m} x_i+\epsilon'\sum_{j=1}^{n-m} y_j\bigg)\otimes \varphi^*(\omega_{C}^{-1}\otimes M)
\]
is relatively ample for every sufficiently small rational numbers $0<\epsilon,\epsilon'<\!\!<1$.
Here $M$ is any ample line bundle on $C$.
\item $\rank(E)=r$ and $\deg(E)=d$.
\end{enumerate}
\end{definition}
We refer to $x_i$ as the \emph{heavy} markings and $y_{i}$ as the \emph{light} markings.
Note that each $x_i$ is connected to the parametrized component $\hc_0\cong C$
through a (possibly empty) chain of $\bP^1$'s, and $y_i$ is located on $\hc_0$ and corresponds to $q_{m+i}$.
Let $\msmm$ denote the moduli stack of $(0+,0+)$-stable quasimaps with one parametrized component to $X$ with $(m,n-m)$-weighted markings.
It follows from the definition that
\begin{equation}
\label{eq:two-extreme-chambers}
Q^{0+,0+}_{C,0\mid n}(X,d)\cong\QuotC,\quad
Q^{0+,0+}_{C,n \mid 0}(X,d)=\qgs.
\end{equation}

Similar to $\qgs$,
the moduli stack $\msmm$
is a proper DM stack
and equipped with a ($T$-equivariant) virtual structure sheaf $\ovir_{\msmm}$.
There are two types of evaluation morphisms:
\begin{align*}
\ev_i: \msmm
&
\rightarrow
X,
\quad i=1,\dots,m,\\
\widetilde{\ev}_j:\msmm
&
\rightarrow
\fr{X}=[M_{r\times N}/\GLr],
\quad j=1,\dots, n-m.
\end{align*}

Let $\mathrm{V}_1,\dots,\mathrm{V}_m,\mathrm{W}_1,\dots,\mathrm{W}_{n-m}$ be (virtual) $\GLr$-modules.
We define their associated classes in $\kg(\fr{X})$ by
$
M_{r\times N}\times_{\GLr}\mathrm{V}_{i}$
and 
$M_{r\times N}\times_{\GLr}\mathrm{W}_{j}
$,
which are still denoted by $\mathrm{V_i}$ and $\mathrm{W}_j$ by convention.
Let $\ca{V}_{i}$ and $\ca{W}_{j}$ denote the restrictions of the above classes to $X$.

\begin{definition}
We define the $(m,n-m)$-weighted invariant by \footnote{Here we use calligraphic font for elements in $K_T^{0}(X)$, which are denoted by italics in the rest of the paper.}
\[
\langle \ca{V}_1, \cdots \ca{V}_m|\mathrm{W}_1,\dots,\mathrm{W}_{n-m}\rangle^{0+,T}_{g,d,m \mid n-m}
:=
\chi^{\vir,T}
\Big(
\msmm,
\prod_{i=1}^m \ev_i^*(\ca{V}_i)
\cdot\prod_{j=1}^{n-m} 
\widetilde{\ev}_{j}^*(\mathrm{W}_j) 
\Big).
\]
\end{definition}

The above invariant is a special case of the one in~\cite[Definition 6.12]{RZ2}, specifically when
$e^*=0$ and $l=0$.
Therefore, we can use the same proof to establish the following slight variation of~\cite[Theorem 6.13]{RZ2}:

\begin{proposition}
\label
{prop:wall-crossing}
Suppose that $\mathrm{W}_{1}\in \ca{R}^{r,k}$.
We have
\[
\langle \ca{V}_1, \cdots \ca{V}_m|
\mathrm{W}_1,\dots,\mathrm{W}_{n-m}\rangle^{0+,T}_{g,d,m \mid n-m}
=\langle \ca{V}_1, \cdots \ca{V}_m,\ca{W}_1|
\mathrm{W}_2\dots,\mathrm{W}_{n-m}\rangle^{0+,T}_{g,d,m+1 \mid n-m-1}.
\]
\end{proposition}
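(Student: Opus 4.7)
The plan is to adapt the proof of~\cite[Theorem 6.13]{RZ2} to the present Quot scheme setting. The argument combines a master space construction with virtual torus localization, followed by a vanishing of the correction terms that parallels the computation in Section~\ref{sec:Quasimap wall-crossing}.

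First, I would construct a master space interpolating between $\msmm$ (with $y_1$ as a light marking) and $Q^{0+,0+}_{C, m+1 \mid n-m-1}(X,d)$ (with $y_1$ promoted to heavy). Following the construction of~\cite{RZ2}, this space carries a natural $\C^*$-action whose fixed loci decompose as: (i) the component $\msmm$ recovering the left-hand side of the claim; (ii) the component $Q^{0+,0+}_{C, m+1 \mid n-m-1}(X,d)$ recovering the right-hand side; and (iii) a family of ``bubble'' components indexed by a degree $0 < d' \leq d$, each fibered over $Q^{0+,0+}_{C, m+1 \mid n-m-1}(X, d-d')$ with fiber a graph-space-like component $F_{d'} \subset QG^{\epsilon=0+}_{0,1}(X, d')$ parametrizing the rational tail that sprouts from $y_1$.

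Virtual torus localization then yields an identity in which the difference between the two invariants of the proposition equals a correction term schematically of the form
\[
\sum_{d' > 0} \big\langle \ca{V}_1, \dots, \ca{V}_m,\; \mu_{d'}(\mathrm{W}_1) \,\big|\, \mathrm{W}_2, \dots, \mathrm{W}_{n-m}\big\rangle^{0+, T}_{g, d-d', m+1 \mid n-m-1},
\]
where $\mu_{d'}(\mathrm{W}_1)$ is the Laurent polynomial part of an analogue of the rational function $\tilde I_{d'}(\omega)$ of Section~\ref{subsec:I-function}, now carrying the insertion $\mathrm{W}_1$ at the distinguished marking on the bubble.

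The main obstacle is to show that each $\mu_{d'}(\mathrm{W}_1)$ vanishes under the hypothesis $\mathrm{W}_1 \in \ca{R}^{r,k}$. Using the explicit fixed-point formula~\eqref{eq:I-d-q}, the insertion $\mathrm{W}_1 = \bS^\lambda(\mathrm{S})$ with $\lambda \in \P_{r,k}$ contributes an additional factor to the numerator whose $\omega$-degree is controlled by $\lambda_1 \leq k$. A degree estimate parallel to~\eqref{eq:last-lower-bound}, using $N = r + k$ in the denominator, then shows that the resulting rational function remains regular at $\omega = 0$ and vanishes at $\omega = \infty$, so its Laurent polynomial part is zero. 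Hence the correction term vanishes, and the asserted identity follows.
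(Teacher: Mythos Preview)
Your proposal is correct and follows essentially the same route as the paper: invoke the master space of~\cite[\textsection{6}]{RZ2} with its commuting $T$- and $\C^*$-actions, apply $K$-theoretic localization to obtain the $T$-equivariant analogue of~\cite[(59)]{RZ2}, and reduce to showing that the correction class $\mu_{d'}^{\mathrm{W}_1}(\omega)$ vanishes for $\mathrm{W}_1\in\ca{R}^{r,k}$ via a degree estimate on the explicit $I$-function formula. One small refinement: the insertion $\bS^{\lambda}(\mathrm{S})$ at the base point contributes the factor $\bS^{\lambda}(\ca{K}_0^{(d')})$, whose $\omega$-degree is bounded by $kd'$ (not just $k$), and the final inequality uses $\sum_a \bar{r}_a(d_{r_a}+1)d_{r_a}\geq 2d'$ to show $(N-r+1)d'-kd'=d'>0$; otherwise your outline matches the paper's argument.
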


\begin{proof}
The master space $\widetilde{\ca{M}}_{C,m\mid n-m}$ introduced in~\cite[\textsection{6}]{RZ2} has commuting $T$- and $\C^{*}$-actions.
By applying the $K$-theoretic localization formula for the $\C^{*}$-action on the master space,
we obtain the $T$-equivariant version of~\cite[(59)]{RZ2} that holds for $T$-equivariant Euler characteristics.
We use the same argument as in the proof of~\cite[Theorem 6.13]{RZ2} to reduce the problem to showing $\mu_{d}^{\mathrm{W}_1}(\omega)=0$ defined in \eqref{eq:I_function_with_W} for any $\mathrm{W}_{1}\in \ca{R}^{r,k}$. We explain this vanishing in Lemma~\ref{lem:vanishin_mu(omega)} below.
\end{proof}
Recall the notations from Section~\ref{subsec:I-function}. Consider $\mathrm{W}=\bb{S}^{\lambda}(\mathrm{S})\in \kg(\fr{X})$ for an integer partition $\lambda\in \P_{r,k}$.
For $d>0$, we define
\begin{equation*} 
I^{\mathrm{W}}_{d}(\omega)
=
(\wev_{\bullet})_*
\bigg(\frac{
	\bS^{\lambda}(
	\ca{S}_{0}
	)
}{\wedge_{-1}^{T\times \bb{C}^*}((N^{\rm rel})^\vee)}\bigg),
\end{equation*}
where $\ca{S}_{0}$ is the restriction of $\ca{S}$ to $F_{d}\times\{0\}$. Let
\begin{equation}\label{eq:I_function_with_W}
\mu_{d}^{\mathrm{W}}(\omega) = [I_{d}^{\mathrm{W}}(\omega)
]_{+}
\end{equation}
be the Laurent polynomial part in the partial fraction decomposition of $I_{d}^{\mathrm{W}}(\omega)$.
For a general $\mathrm{W}\in \ca{R}^{r,k}$, $\mu_{d}^{\mathrm{W}}(\omega)$ can be defined using linearity.

Moreover, the explicit localization calculation in \cite[Appendix A]{RZ2} shows that
\begin{equation*}
I^{\mathrm{W}}_{d}(\omega)
=
\sum_{(d)}
\sum_{\sigma
}
\sigma(
\bS^{\lambda}(
\ca{K}^{(d)}_{0})
\cdot
I_{(d)}),
\end{equation*}
where $I_{(d)}$ is defined in \eqref{eq:I-d-q} and
\[
\ca{K}^{(d)}_{0}=\sum_{a=1}^{j+1}\sum_{s=1}^{\bar{r}_{a}}\ca{L}^{\vee}_{r_{a-1}+s}\omega^{d_{r_{a}}}.
\]

\begin{lem}\label{lem:vanishin_mu(omega)}
	For any $\mathrm{W}\in \ca{R}^{r,k}$, the rational function $I^{\mathrm{W}}_{d}(\omega)
	$ is regular at $\omega=0$ and vanishes at $\omega=\infty$.
	As a consequence, we have $\mu_{d}^{\mathrm{W}}(\omega)=
	[
	I_{d}^{\mathrm{W}}(\omega)
	]_{+}=0$.
\end{lem}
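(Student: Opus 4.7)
The plan is to adapt the degree-counting argument of Lemma~\ref{lem:laurent-part}, carrying the extra factor $\bS^{\lambda}(\ca{K}^{(d)}_{0})$ along in the numerator. By linearity of $I^{\mathrm{W}}_{d}(\omega)$ in $\mathrm{W}$, and since $\ca{R}^{r,k}$ is spanned by $\{\bS^{\lambda}(\mathrm{S}):\lambda\in\P_{r,k}\}$, it suffices to prove the result for $\mathrm{W}=\bS^{\lambda}(\mathrm{S})$ with $\lambda\in\P_{r,k}$. Regularity of each summand of $I^{\mathrm{W}}_{d}(\omega)$ at $\omega=0$ is immediate, since every $\omega$-dependent factor in the denominator has the form $1-(\cdot)\,\alpha_{e}^{-1}\omega^{b}$ with $b\geq 1$ and is nonvanishing at $\omega=0$.

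The heart of the argument is a sharp bound on $\deg_{\omega}\bS^{\lambda}(\ca{K}^{(d)}_{0})$. Relabel the line-bundle summands of $\ca{K}^{(d)}_{0}$ as $\ca{L}^{\vee}_{i}\omega^{d^{*}_{i}}$ for $i=1,\dots,r$, where $d^{*}_{i}:=d_{r_{a}}$ whenever $r_{a-1}<i\leq r_{a}$, yielding a nondecreasing sequence with $\sum_{i}d^{*}_{i}=d$. The SSYT expansion $\bS^{\lambda}(\sum_{i}L_{i})=\sum_{T}\prod_{b}L_{T(b)}$ gives
\[
\deg_{\omega}\bS^{\lambda}(\ca{K}^{(d)}_{0})=\max_{T}\sum_{b}d^{*}_{T(b)},
\]
the maximum running over SSYT $T$ of shape $\lambda$ with entries in $\{1,\dots,r\}$. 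Because the $d^{*}_{i}$ are nondecreasing, this maximum is attained by filling the $j$-th column with the largest labels $\{r-\lambda'_{j}+1,\dots,r\}$ (a valid SSYT since $\lambda'$ is weakly decreasing); the resulting entry-multiplicities satisfy $m_{i}(T_{\max})=\lambda_{r-i+1}$ by the definition of the conjugate partition, so
\[
\deg_{\omega}\bS^{\lambda}(\ca{K}^{(d)}_{0})=\sum_{i=1}^{r}\lambda_{r-i+1}\,d^{*}_{i}\leq\lambda_{1}\sum_{i}d^{*}_{i}=\lambda_{1}d\leq kd.
\]

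Combining this bound with the degree estimates already isolated in the proof of Lemma~\ref{lem:laurent-part} (the contribution of $I_{(d)}$ to the numerator and the denominator degree are unchanged), the difference between the $\omega$-degrees of the denominator and the numerator in each summand of $I^{\mathrm{W}}_{d}(\omega)$ satisfies
\[
\frac{k+1}{2}\sum_{a}\bar{r}_{a}d_{r_{a}}(d_{r_{a}}+1)-\lambda_{1}d\geq(k+1)d-\lambda_{1}d=(k+1-\lambda_{1})d\geq d>0,
\]
where I use $\sum_{a}\bar{r}_{a}d_{r_{a}}(d_{r_{a}}+1)\geq 2d$, which holds because $d_{r_{a}}^{2}\geq d_{r_{a}}$ for every nonnegative integer. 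Therefore every summand vanishes at $\omega=\infty$, and so does $I^{\mathrm{W}}_{d}(\omega)$; combined with regularity at $\omega=0$ this forces $\mu^{\mathrm{W}}_{d}(\omega)=[I^{\mathrm{W}}_{d}(\omega)]_{+}=0$. The main obstacle is the sharp degree estimate on $\bS^{\lambda}(\ca{K}^{(d)}_{0})$; the hypothesis $\lambda\in\P_{r,k}$ enters solely through $\lambda_{1}\leq k$, which is precisely what keeps the final estimate strictly positive.
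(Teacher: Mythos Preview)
Your proof is correct and follows essentially the same strategy as the paper: reduce by linearity to $\mathrm{W}=\bS^{\lambda}(\mathrm{S})$ with $\lambda\in\P_{r,k}$, bound $\deg_{\omega}\bS^{\lambda}(\ca{K}^{(d)}_{0})\leq kd$, replace the degree-$1$ contribution of $(1-\omega)$ in the estimate of Lemma~\ref{lem:laurent-part} by this bound, and verify $\frac{k+1}{2}\sum_{a}\bar{r}_{a}d_{r_{a}}(d_{r_{a}}+1)-kd\geq d>0$ via $\sum_{a}\bar{r}_{a}d_{r_{a}}(d_{r_{a}}+1)\geq 2d$. The paper simply asserts the bound $\deg_{\omega}\bS^{\lambda}(\ca{K}^{(d)}_{0})\leq kd$, whereas you supply a clean SSYT argument that in fact yields the sharper bound $\lambda_{1}d$; this is a nice addition but not a genuinely different route.
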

\begin{proof}
	Given $\lambda\in \P_{r,k}$, we consider the $\GLr$-representation $\mathrm{W}=\bS^{\lambda}((\C^{r})^{\vee})$.
	Note that $ \bS^{\lambda}(
	\ca{K}^{(d)}_{0})$ with $\lambda\in\P_{r,k}$
	is a polynomial in $\omega$ whose degree is bounded above by $kd$. The proof is similar to that of Lemma~\ref{lem:laurent-part}, except we have to add $kd$ for the degree of $\bS^{\lambda}(
	\ca{K}^{(d)}_{0})$ in place of $1$ corresponding to the polynomial $(1-\omega)$ in \eqref{eq:I-d-q}.
	In the last step, using the observation:
\[
\sum_{a=1}^{j+1}\bar{r}_a(d_{r_a}+1)d_{r_a}\geq 2 \sum_{a=1}^{j+1}d_{r_a}=2d,
\]
we establish the analogous bound from formula~\eqref{eq:last-lower-bound} as:
\[
\frac{(N-(r-1))}{2}\sum_{a=1}^{j+1}\bar{r}_a(d_{r_a}+1)d_{r_a}-kd\geq (N-(r-1))d-kd
=d
>0.
\]
Thus $I^{W}_{d}(\omega)$ vanishes at $\omega=\infty$.
\end{proof}

By repeatedly applying Proposition~\ref{prop:wall-crossing} and using the isomorphisms~\eqref{eq:two-extreme-chambers}, we obtain  
\begin{corollary}
	\label
	{cor:wall-crossing}
	Suppose that $\mathrm{V}_{1},\dots,\mathrm{V}_{n}\in \ca{R}^{r,k}$. We have 
	\[
	\langle
	\mathrm{V}_{1},
	\dots,
	\mathrm{V}_{n}
	\rangle^{\quot,T}_{g,d}
	=
	\langle
	\ca{V}_{1},
	\dots,
	\ca{V}_{n}
	\rangle^{0+,T}_{g,d}.
	\]
\end{corollary}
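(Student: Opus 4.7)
The plan is a straightforward induction on the number of heavy markings, driven by Proposition~\ref{prop:wall-crossing} together with the two boundary identifications recorded in~\eqref{eq:two-extreme-chambers}. The left-hand side corresponds to the extreme ``all-light'' case, the right-hand side to the extreme ``all-heavy'' case, and one light insertion gets promoted to a heavy insertion at each inductive step.

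To set up the base case, I would use the isomorphism $Q^{0+,0+}_{C,0\mid n}(X,d)\cong \QuotC$ to rewrite
\[
\langle \mathrm{V}_{1},\dots,\mathrm{V}_{n}\rangle^{\quot,T}_{g,d} = \langle \,|\, \mathrm{V}_{1},\dots,\mathrm{V}_{n}\rangle^{0+,T}_{g,d,0\mid n},
\]
matching the light evaluation maps $\widetilde{\ev}_j$ on the right with the stacky evaluation $\wev_p$ used on the left (the invariant on $\QuotC$ is independent of the chosen point $p\in C$, since all fibers $\ca{S}_p$ of the universal bundle have the same class in $K(\QuotC)$). I would then apply Proposition~\ref{prop:wall-crossing} a total of $n$ times, each time converting the leading light insertion $\mathrm{V}_{i}\in\rrk$ into the heavy insertion $\ca{V}_{i}\in K(X)$, until no light markings remain. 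The second isomorphism $Q^{0+,0+}_{C,n\mid 0}(X,d) = \qgs$ then identifies the resulting $(n,0)$-weighted invariant with $\langle \ca{V}_{1},\dots,\ca{V}_{n}\rangle^{0+,T}_{g,d}$, closing the chain.

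The only point requiring attention is that Proposition~\ref{prop:wall-crossing} applies at every inductive step: it demands that the light insertion being promoted lies in $\rrk$. This is immediate from the hypothesis of the corollary, since each $\mathrm{V}_{i}\in\rrk$ is fixed at the outset, and the wall-crossing only promotes one insertion at a time without altering the remaining light ones. Hence no genuine obstacle arises; the corollary is a direct consequence of the already-established wall-crossing proposition together with the two boundary identifications.
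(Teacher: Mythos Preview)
Your proposal is correct and matches the paper's own argument exactly: the paper simply states that the corollary follows by repeatedly applying Proposition~\ref{prop:wall-crossing} and using the isomorphisms~\eqref{eq:two-extreme-chambers}. Your write-up spells out precisely this induction, including the observation that each application of the proposition is licensed by the hypothesis $\mathrm{V}_i\in\rrk$.
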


\subsection{Proof of Theorem \ref{thm:intro-Wall_crosing}}
In this subsection, we take $C=\bP^1$. Recall the definition of the moduli space of stable quotients $Q^{\epsilon=0+}_{0,n}(X,d)$ from \eqref{eq:stable_quotients}, and the moduli stack of $(0+)$-stable quasimaps with parameterized component $Q^{0+}_{\bP^{1},3}(X,d)$ from Section~\ref{sec:stable graph quasimap invariants}.
\begin{lemma}
\label{lem:identify-3-point-moduli}
There is a natural $T$-equivariant isomorphism 
\[
Q^{\epsilon=0+}_{0,3}(X,d)
\cong
Q^{0+}_{\bP^{1},3}(X,d).
\]
\end{lemma}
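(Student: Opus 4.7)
The plan is to construct mutually inverse $T$-equivariant morphisms between the two moduli stacks. The central observation is that, in genus zero with exactly three marked points, the coarse stabilization $\overline{M}_{0,3}$ is a point: the stabilization of the source curve is forced to be the fixed $\bP^{1}$ with the three distinguished points $q_1,q_2,q_3$. So the extra parametrization datum $\varphi$ that appears in a $(0+)$-stable graph quasimap is canonically supplied by the curve's stabilization morphism, and conversely the graph data can simply be forgotten.

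First I would construct the morphism
\[
\Phi:\, Q^{\epsilon=0+}_{0,3}(X,d)\longrightarrow Q^{0+}_{\bP^{1},3}(X,d).
\]
Given a family $((C,p_1,p_2,p_3),E,s)$ over a base $B$, the stabilization of the prestable 3-marked genus zero family $(C,\mathbf{p})$ over $B$ is a family of stable $(0,3)$-pointed curves, hence the trivial family $\bP^{1}\times B$ with three sections $q_i\times B$. This produces a canonical $B$-morphism $\sigma:C\to \bP^{1}\times B$ of fiberwise degree one with $\sigma(p_i)=q_i\times B$. Set $\varphi:=\sigma$ and define $\Phi$ by $((C,\mathbf{p}),E,s)\mapsto((C,\mathbf{p}),E,s,\varphi)$. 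The inverse candidate $\Psi:Q^{0+}_{\bP^{1},3}(X,d)\to Q^{\epsilon=0+}_{0,3}(X,d)$ is the obvious morphism that forgets the parametrization.

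The key step is to check that the stability conditions transport correctly. On each irreducible component $C_v$ of a geometric fiber that is contracted by $\sigma$ (i.e., a bubble in the graph-quasimap picture), $\sigma^{*}(\omega_{\bP^{1}}^{-1})|_{C_v}$ is trivial, so the $(0+)$-graph ampleness requirement reduces exactly to the $\epsilon=0+$ condition $\epsilon\deg(E|_{C_v})+s_v-2>0$ for small $\epsilon>0$ (equivalently $s_v\geq 3$, or $s_v=2$ with $\deg E|_{C_v}>0$). On the unique non-contracted component $C_0\subset C$, the pullback $\sigma^{*}(\omega_{\bP^{1}}^{-1})$ has degree two, making the graph-quasimap condition automatic; meanwhile the definition of stabilization forces $s_{C_0}\geq 3$, so the $\epsilon=0+$ quasimap condition is also automatic on $C_0$. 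Therefore $\Phi$ and $\Psi$ are both well-defined.

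Finally, $\Phi$ and $\Psi$ are mutually inverse because, on a genus zero prestable curve with three marked points, any degree-one morphism to $\bP^{1}$ sending the markings to the three fixed points $q_i$ is unique and coincides with the stabilization morphism; thus $\varphi$ in a graph quasimap is determined by the underlying quasimap, and $\Psi\circ\Phi=\mathrm{id}$, $\Phi\circ\Psi=\mathrm{id}$. The construction is manifestly $T$-equivariant since $\sigma$ depends only on the marked curve, not on $(E,s)$ or on the diagonal $T$-action on $\O_{C}^{\oplus N}$. The main technical obstacle is setting up the stabilization morphism in families (standard, but needing care for small Artin-stack issues) and checking the matching of stability for components with $s_v=2$; this reduces to the bookkeeping above, tracking special points and the degree of $E$ on each component.
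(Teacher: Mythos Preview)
Your argument is correct and follows essentially the same route as the paper. The paper's proof is terser: it constructs the forgetful map $\mu$ in one direction, and in the other direction builds $\varphi$ directly from the basepoint-free line bundle $\omega_{\pi}(p_1+p_2+p_3)$, which is precisely the stabilization morphism you invoke; it then declares $\mu$ and $\nu$ mutually inverse without writing out the stability bookkeeping that you carry out explicitly.
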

\begin{proof}
There is a $T$-equivariant forgetful morphism 
\[
\mu: 
Q^{0+}_{\bP^{1},3}(X,d)
\rightarrow 
Q^{\epsilon=0+}_{0,3}(X,d)
\]
forgetting the parametrization $\varphi$.
Let $U$ be a scheme and let $(\pi:\ca{X}\rightarrow U,p_{1},p_{2},p_{3};\ca{E};s,\varphi)$ be a $U$-point of $
Q^{\epsilon=0+}_{0,3}(X,d)$.
Then the basepoint-free line bundle $\omega_{\pi}(p_{1}+p_{2}+p_{3})$ induces an $U$-morphism $\varphi:\ca{X}\rightarrow
U\times \bP^{1}$ of degree 1.
Hence we obtain an $U$-point of
$Q^{\epsilon=0+}_{0,3}(X,d),
$ 
and therefore a morphism
\[
\nu: 
Q^{\epsilon=0+}_{0,3}(X,d)
\rightarrow
Q^{0+}_{\bP^{1},3}(X,d), 
\]
which is inverse to $\mu$. This finishes the proof of the lemma.

\end{proof}

\begin{theorem}
\label
{prop:QK=quot}
Let $\mathrm{V}_{1},\mathrm{V}_{2},\mathrm{V}_{3}\in  \ca{R}^{r,k}$. Then we have
\[
\langle
\ca{V}_{1},
\ca{V}_{2},
\ca{V}_{3}
\rangle^{T}_{0,3,d}
=
\langle
\mathrm{V}_{1},
\mathrm{V}_{2},
\mathrm{V}_{3}
\rangle^{\quot,T}_{0,d}.
\]
\end{theorem}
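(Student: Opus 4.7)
The plan is to chain together the three wall-crossing/identification results established earlier in the paper so that the stable map moduli in the definition of $\langle \ca{V}_1,\ca{V}_2,\ca{V}_3\rangle^{T}_{0,3,d}$ is transformed into the Quot scheme $\Quot$ computing $\langle \mathrm{V}_1,\mathrm{V}_2,\mathrm{V}_3\rangle^{\quot,T}_{0,d}$. The assumption $n=3$ with parametrized $C=\bP^1$ is exactly the borderline case where the two quasimap compactifications (with and without a parametrized component) coincide, and the assumption $\mathrm{V}_i\in\ca{R}^{r,k}$ is precisely what the light-to-heavy wall-crossing requires so that the correction $I$-function vanishes.

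First I would invoke the $\epsilon$-wall-crossing in Proposition~\ref{prop:epsilon-wall-crossing}: since we have $n=3\geq 3$ markings, the $T$-equivariant $\epsilon$-stable quasimap invariants are independent of $\epsilon\in\Q_{>0}$. Combined with the classical identification $Q^{\epsilon=\infty}_{0,3}(X,d)\cong \overline{M}_{0,3}(X,d)$, this gives
\[
\langle \ca{V}_1,\ca{V}_2,\ca{V}_3 \rangle^{T}_{0,3,d} = \langle \ca{V}_1,\ca{V}_2,\ca{V}_3 \rangle^{\epsilon=0+,T}_{0,3,d}.
\]

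Next I would apply Lemma~\ref{lem:identify-3-point-moduli}, which provides a canonical $T$-equivariant isomorphism $Q^{\epsilon=0+}_{0,3}(X,d)\cong Q^{0+}_{\bP^1,3}(X,d)$ sending evaluation maps to evaluation maps. This identification also matches the virtual structure sheaves because both perfect obstruction theories can be expressed as $(R\pi_*Lf^*\bb{T}_{\fX})^\vee$ as in~\eqref{eq:alternate-POT}, and the isomorphism preserves the universal subsheaf on the universal curve. Consequently
\[
\langle \ca{V}_1,\ca{V}_2,\ca{V}_3 \rangle^{\epsilon=0+,T}_{0,3,d} = \langle \ca{V}_1,\ca{V}_2,\ca{V}_3 \rangle^{0+,T}_{0,3,d},
\]
where the right-hand side is the $(0+)$-stable graph quasimap invariant over $C=\bP^1$.

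Finally, I would apply the light-to-heavy wall-crossing in Corollary~\ref{cor:wall-crossing} (a consequence of Proposition~\ref{prop:wall-crossing} applied three times, successively converting each heavy insertion $\ca{V}_i$ to its stacky lift $\mathrm{V}_i$), which is valid precisely because each $\mathrm{V}_i\in\ca{R}^{r,k}$. This yields
\[
\langle \ca{V}_1,\ca{V}_2,\ca{V}_3 \rangle^{0+,T}_{0,3,d} = \langle \mathrm{V}_1,\mathrm{V}_2,\mathrm{V}_3 \rangle^{\quot,T}_{0,d},
\]
after using the isomorphism $Q^{0+,0+}_{\bP^1,0\mid 3}(X,d)\cong \Quot$ in~\eqref{eq:two-extreme-chambers}. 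Chaining the three equalities gives the theorem.

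The only genuine content beyond invoking cited results is the compatibility check in the middle step: that the isomorphism of Lemma~\ref{lem:identify-3-point-moduli} intertwines the two virtual structure sheaves. I expect this to be the mildly delicate point, but it follows directly because both POTs are built from the same universal exact sequence $0\to\ca{S}\to\ca{O}^{\oplus N}_{\ca{C}}\to\ca{Q}\to 0$ on the (common) universal curve. Everything else is formal manipulation of invariants whose heavy lifting was carried out in Sections~\ref{sec:Quasimap wall-crossing} and~\ref{sec:Light-to-heavy wall crossing}.
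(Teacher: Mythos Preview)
Your proposal is correct and follows essentially the same three-step chain as the paper's proof: $\epsilon$-wall-crossing (Proposition~\ref{prop:epsilon-wall-crossing}), the identification $Q^{\epsilon=0+}_{0,3}(X,d)\cong Q^{0+}_{\bP^1,3}(X,d)$ (Lemma~\ref{lem:identify-3-point-moduli}), and the light-to-heavy wall-crossing (Corollary~\ref{cor:wall-crossing}). Your added remark on POT compatibility in the middle step is harmless but in fact unnecessary here, since for the Grassmannian both moduli spaces are smooth and the virtual structure sheaves coincide with the ordinary ones; one minor notational slip is that the graph quasimap invariant is written $\langle-\rangle^{0+,T}_{g,d}$ in the paper (no $n$ subscript).
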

\begin{proof}
By Theorem~\ref{prop:epsilon-wall-crossing}, we have
\[
\langle
\ca{V}_{1},
\ca{V}_{2},
\ca{V}_{3}
\rangle^{\epsilon=\infty,T}_{0,3,d}
=
\langle
\ca{V}_{1},
\ca{V}_{2},
\ca{V}_{3}
\rangle^{\epsilon=0+,T}_{0,3,d}
\]
and the former is equal to the quantum $K$-invariant
$
\langle
\ca{V}_{1},
\ca{V}_{2},
\ca{V}_{3}
\rangle^{T}_{0,3,d}
$.
It follows from Lemma~\ref{lem:identify-3-point-moduli} that
\[
\langle
\ca{V}_{1},
\ca{V}_{2},
\ca{V}_{3}
\rangle^{\epsilon=0+,T}_{0,3,d}
=
\langle
\ca{V}_{1},
\ca{V}_{2},
\ca{V}_{3}
\rangle^{0+,T}_{0,d}.
\]
By
Corollary~\ref{cor:wall-crossing}
,
we have
\[
\langle
\ca{V}_{1},
\ca{V}_{2},
\ca{V}_{3}
\rangle^{0+,T}_{0,d}
=
\langle
\mathrm{V}_{1},
\mathrm{V}_{2},
\mathrm{V}_{3}
\rangle^{\quot,T}_{0,d}.
\]
This concludes the proof.
\end{proof}

\begin{rem}
It is well-known that 3-pointed, genus zero Gromov--Witten invariants of Grassmannians with Schubert class insertions
coincide with the corresponding invariants defined using Quot schemes~\cite{Bertram2}.
A proof of this fact can be found in~\cite[\textsection{4}]{CF3}, which utilizes the Kleiman's transversality theorem
and a detailed description of the boundary of the Quot scheme. The wall-crossing strategy employed in this paper uses a different geometric analysis compared to \cite{Bertram2,CF3}.
\end{rem}

\begin{corollary}
\label
{cor:identify-2-point}
Let $V_{1},V_{2}, V\in  \ca{R}^{r,k}$. Then we have
\[
\langle
\ca{V}_{1},
\ca{V}_{2}
\rangle^{T}_{0,2,d}
=
\langle
\mathrm{V}_{1},
\mathrm{V}_{2}
\rangle^{\quot,T}_{0,d},
\quad
\quad
\langle
\ca{V}
\rangle^{T}_{0,1,d}
=
\langle
\mathrm{V}
\rangle^{\quot,T}_{0,d}.
\]
\end{corollary}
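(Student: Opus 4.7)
The plan is to reduce the $2$-pointed and $1$-pointed identities to the already-established $3$-pointed identity of Theorem~\ref{prop:QK=quot} via a $K$-theoretic string equation. For the $2$-pointed case with $d>0$, the forgetful morphism $\pi : \overline{M}_{0,3}(X,d) \to \overline{M}_{0,2}(X,d)$ is the universal curve, whose geometric fibers are connected trees of rational curves, so $R\pi_{*}\ca{O}=\ca{O}$. Combined with the compatibility $\pi^{*}\ev_{i}=\ev_{i}$ for $i=1,2$, valid for $d>0$ (where any stable rational tail must carry positive degree and hence cannot arise from bearing a single retained marking alone), this yields the $K$-theoretic string equation
\[
\langle \ca{V}_{1},\ca{V}_{2},1 \rangle^{T}_{0,3,d} = \langle \ca{V}_{1},\ca{V}_{2} \rangle^{T}_{0,2,d}.
\]
Since $1=\bS^{(0)}(\mathrm{S}) \in \ca{R}^{r,k}$, Theorem~\ref{prop:QK=quot} applied with $\mathrm{V}_{3}=1$ gives
\[
\langle \ca{V}_{1},\ca{V}_{2},1 \rangle^{T}_{0,3,d} = \langle \mathrm{V}_{1},\mathrm{V}_{2},1 \rangle^{\quot,T}_{0,d} = \langle \mathrm{V}_{1},\mathrm{V}_{2} \rangle^{\quot,T}_{0,d},
\]
where the last equality uses $\wev_{p}^{*}(1)=1$. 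Chaining these yields the $2$-pointed identity for $d>0$; for $d=0$, both sides reduce by convention to $\chi^{T}(X,V_{1}\cdot V_{2})$ since $\quot_{0}(\bP^{1},N,r)\cong X$.

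For the $1$-pointed case with $d>0$, I would apply the same strategy one level down: the forgetful morphism $\overline{M}_{0,2}(X,d)\to \overline{M}_{0,1}(X,d)$ is again a universal curve (both moduli are smooth proper DM stacks for $d\geq 1$), giving the string equation $\langle \ca{V},1 \rangle^{T}_{0,2,d} = \langle \ca{V} \rangle^{T}_{0,1,d}$. Applying the $2$-pointed identity just established to $\langle \ca{V},1 \rangle^{T}_{0,2,d}$ and using $\wev_{p}^{*}(1)=1$ on the Quot side completes the proof. The $d=0$ case is again handled by convention as $\chi^{T}(X,V)$.

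The main obstacle I anticipate is verifying the $K$-theoretic string equation in the regimes $n=1,2$, where the targets of the forgetful morphisms lie outside the purely stable range of abstract pointed curves. Concretely, one must check that the forgetful map is indeed the universal curve and that $\pi^{*}\ev_{i}=\ev_{i}$; for $d>0$ this holds because no rational tail carrying only a single retained marking can be stable (it would require positive degree, which would already appear on the unforgotten curve and hence cancel out in the evaluation). Once these are secured, the vanishing $R^{\geq 1}\pi_{*}\ca{O}=0$ for a universal family with geometrically connected rational fibers supplies the string equation, and the rest of the argument is a direct chain. The alternative route — a moduli-level analog of Lemma~\ref{lem:identify-3-point-moduli} for $n=2$ via a twisted line bundle such as $\omega_{\pi}(p_{1}+p_{2})\otimes \det(\ca{E})^{\epsilon}$ — would also work but requires a more delicate analysis of basepoint-freeness on bubble components and compatibility with the POT $(R\pi_*\shom(\ca{S},\ca{Q}))^{\vee}$.
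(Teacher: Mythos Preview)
Your approach is essentially the same as the paper's: reduce to the $3$-pointed identity of Theorem~\ref{prop:QK=quot} by inserting the trivial class and invoking the $K$-theoretic string equation on the stable-map side, while on the Quot side the insertion $\wev_p^*(1)=1$ is tautological. The paper is simply more terse: it cites the string equation from~\cite[(22)]{Lee} without re-deriving it, and for the $1$-pointed case it inserts two copies of $\O_{\fr{X}}$ directly into Theorem~\ref{prop:QK=quot} rather than cascading $3\to 2\to 1$ as you do. Your extended justification of $R\pi_*\O=\O$ and $\pi^*\ev_i=\ev_i$ is correct but unnecessary once Lee's string equation is cited; your worry about the ``unstable range'' $n=1,2$ is also unfounded, since for $d>0$ all the relevant moduli spaces $\overline{M}_{0,n}(X,d)$ exist and the forgetful map is the universal curve in the usual way.
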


\begin{proof}
By Definition~\ref{def:intro-K-theoretic-quot}, we have
\begin{align*}
\langle
\mathrm{V}_{1},
\mathrm{V}_{2},
\O_{\fr{X}}
\rangle^{\quot,T}_{d}
=&
\langle
\mathrm{V}_{1},
\mathrm{V}_{2}
\rangle^{\quot,T}_{d},\\
\langle
\mathrm{V},
\O_{\fr{X}},
\O_{\fr{X}}
\rangle^{\quot,T}_{d}
=&
\langle
\mathrm{V}
\rangle^{\quot,T}_{d},
\end{align*}
where $\O_{\fr{X}}$ is the trivial line bundle on $\fr{X}$.
Similar equalities hold for quantum $K$-invariants due to the string equation~\cite[(22)]{Lee}.
Hence Theorem~\ref{prop:QK=quot} implies the corollary.
\end{proof}

\section{Degeneration formula}
\label
{sec:gluing}
In this section, we adapt the constructions and arguments in~\cite{Li-Wu,Qu}
to prove the degeneration formulas in Theorem~\ref{thm:intro_degeneration_formula} for $K$-theoretic Quot scheme invariants.

\subsection{Expanded degenerations}
We consider a simple degeneration of smooth projective curves.
Let $\ca{C}\rightarrow B$ be a projective morphism from a smooth variety $\ca{C}$ to a smooth pointed curve
$(B,0)$ such that the fibers outside 0 are smooth curves of genus $g$, and the fiber over
0 is a union of two smooth irreducible components $C_-$ and $C_+$ along the node $x_{\rm node}$:
\[
    C_{\rm deg} = C_- \coprod_{x_{\rm node}} C_+.
\]
Let $g_-$ and $g_+$ be the genera of $C_-$ and $C_+$, respectively. Then $g_-+g_+=g$.
Let $x_-$ and $x_+$ be the points on $C_-$ and $C_+$ corresponding to the node $x_{\rm node}$.

Expanded degenerations were introduced in~\cite{Jun1,Jun2}.
We recall the setup in~\cite{Li-Wu}.
Let $0:=[1:0]$ and $\infty :=[0:1]$ be two points on $\bP^1$.
There is a universal family $\fr{C}\rightarrow \fr{B}$ of expanded degenerations associated to
the family $\ca{C}\rightarrow B$,
whose singular fibers are expansions of $C_{\rm deg}$ obtained by inserting a chain of $\bP^1$'s
at the node $x_{\rm node}$.
More precisely, we have
\[
C_{\rm deg}[n]
= C_-\coprod_{x_-=0}
\underbrace{ \bP^{1} \coprod_{\infty=0} \dots \coprod_{\infty=0}\bP^1
 }_\text{$n$ copies} \coprod_{\infty=x_+} C_+,
\]
where $n\geq 0$ and we identify $x_-$ in $C_-$ with 0 in the first copy of $\bP^1$, identify
$\infty$ in the $i$-th $\bP^1$ with $0$ in the $(i+1)$-th $\bP^1$, and identify
$\infty$ in the last $\bP^1$ with $x_+$ in $C_+$. 
We refer to this chain of $\bP^1$'s as a \emph{rational bridge} (of length $n$).
The universal family fits into the following commutative diagram
\[
	\begin{tikzcd}
	\fr{C} \arrow[r
	] \arrow[d] & \ca{C} \arrow[d] \\
	\fr{B} \arrow[r] & B
	\end{tikzcd}
	\]
where the first horizontal map is an isomoprhism on smooth fibers and, on singular fibers, contracts the chains of $\bP^1$'s to
the node $x_{\rm node}$ in $C_{\rm deg}$.
\begin{rem}
For detailed definitions of the stacks $\fr{E}$ and $\fr{B}$,
denoted as $\fX$ and $\fr{C}$ respectively in \cite[\textsection{2.1}, \textsection{2.2}]{Li-Wu},
we direct the reader to that source.
Note that $\fr{B}_0$, the fiber of
$\fr{B}\rightarrow B$ over 0, is independent of $\fr{B}$
and is the same as $\fr{T}_0$ in~\cite{ACFW}.
\end{rem}

For the relative pairs $(C_-,x_-)$ and $(C_+,x_+)$, the universal families of
expanded degenerations (or expanded relative pairs)
are $(\fr{C}_-,\fr{x}_-)\rightarrow \ca{T}$ and $(\fr{C}_+,\fr{x}_+)\rightarrow \ca{T}$,
respectively.
Here, the base $\ca{T}$ will be explained
in Remark~\ref{rem:bases}.
The fibers of $(\fr{C}_\pm,\fr{x}_\pm)\rightarrow \ca{T}$ are expansions
of $(C_\pm, x_\pm)$ obtained by attaching chains of $\bP^1$'s
of various lengths to $C_\pm$ at $x_\pm$:
\begin{align*}
(C_-[n],x_-[n]) &:= C_- \coprod_{x_-=0} 
\underbrace{\bP^1 \coprod_{\infty =0 }\dots \coprod_{\infty =0 }\bP^1}_\text{$n$ copies}, \\
(C_+[n],x_+[n]) &:= 
\underbrace{\bP^1 \coprod_{\infty =0 }\dots \coprod_{\infty =0 }\bP^1}_\text{$n$ copies}\coprod_{\infty = x_+ }C_+, 	
\end{align*}
We refer to these chains of $\bP^1$'s as \emph{rational tails}, and the above expansions are sometimes referred to as \emph{accordions}. We also need the family $(\fr{x}_-, \fr{C}_\sim, \fr{x}_+)\rightarrow \ca{T}_\sim$ of nonrigid expanded
degenerations associated to the node, whose fibers are chains of $\bP^1$'s of length $n$ for $n\geq 0$:
\[
(x_-, \bP^1[n]_\sim, D_+) := 
\underbrace{\bP^1\coprod_{\infty = 0}\dots \coprod_{\infty=0} \bP^1}_\text{$n$ copies}. 	
\]

\begin{rem}
	\label{rem:bases}
The stacks $\ca{T},\ca{T}_\sim$ of expanded degenerations
are independent of the choices of $(C_\pm,x_\pm)$ and the same as those in~\cite{Graber-Vakil, ACFW}.
According to~\cite{Graber-Vakil} (see also~\cite[Proposition 4.2]{ACFW}),
the stack $\ca{T}_\sim$ can be identified with the moduli stack $\fr{M}^{ss}_{0,2}$
of semistable, genus zero curves with two markings, where nodes only separate these markings.
\end{rem}
\begin{rem}
	Our notation $\ca{T}$ for expanded relative pairs aligns with that in~\cite{Qu};
	see Remark 4.3 therein for distinctions from the notation used in~\cite{Li-Wu}.
\end{rem}

\subsection{Good degeneration of Quot schemes}
We specify the definition of \emph{stable admissible quotients} and their moduli space from~\cite{Li-Wu}
to our setting.
According to~\cite[Definition 3.1]{Li-Wu}, for a closed subscheme $D\subset W$ of a noetherian scheme,
a coherent sheaf on $W$ is said to be \emph{normal} to $D$ if $\mathrm{Tor}_1^{\ca{O}_W}(F,\O_D) =0$.
Let $W=C_1 \coprod_p C_2$ be the union of two smooth curves intersecting at the node $p$.
Let $Z$ be a scheme and let $\ca{F}$ be a $Z$-flat family of coherent sheaves on $W\times Z$.
Suppose for any $z\in Z$, the sheaf $\ca{F}_z= \ca{F}\otimes_{\O_Z}\C(z)$ is normal to $p$.
After restricting to an affine neighborhood of $p$, the map (3.2) in~\cite{Li-Wu}
is an isomorphism.
According to~\cite[Proposition 3.5]{Li-Wu}, there is an open neighborhood
$U \subset W\times Z$ of $p\times Z$ such that $\ca{F}|_U$ is flat over $U$.
This implies that $\ca{F}|_U$ is torsion-free and, therefore, locally free at $p\times Z$.

As the family $\fr{C}\rightarrow \fr{B}$ is represented by a projective morphism,
the relative Quot scheme of this family parametrizing rank $N-r$ degree $d$ quotients of the
rank-$N$ trivial sheaf is an algebraic stack projective over $\fr{B}$.
In~\cite{Li-Wu}, the authors constructed an open substack $\QuotCB$
of the relative Quot scheme that parametrizes \emph{stable} quotients.
By~\cite[Definition 4.2]{Li-Wu}, the restriction of a quotient $\phi: \O^{\oplus N}_{\fr{C}} \rightarrow \ca{Q}$
on any smooth fiber of $\fr{C}\rightarrow \fr{B}$ is automatically stable.
Over the component $C_{\rm deg}[n]$ of the singular fibers, the restriction of $\phi$ is said to be stable
if its automorphism group, a subgroup of the automorphism group of $C_{\rm deg}[n]$,
is finite and it is \emph{admissible} in the sense that $\ca{Q}$
is normal to all the nodes in $C_{\rm deg}[n]$.
By~\cite[Lemma 1]{MOP} and the discussion in the previous paragraph,
the stability condition on $\phi$ introduced in~\cite[Definition 4.2]{Li-Wu}
is equivalent to that $\ca{Q}|_{\cdegn}$ is locally free at all the nodes
and the restriction $\ca{Q}|_{\bP^1}$ to any component of the rational bridges is
of positive degree.
Hence, in our setting of expanded degeneration of curves,
we introduce the following equivalent definition:	
\begin{definition}
	\label{def:stable-quotient}
	A $\fr{B}$-flat family of coherent quotients $\ca{O}^{\oplus N}\rightarrow \ca{Q}$
	is stable if the restriction $\ca{Q}|_{\cdegn}$ is locally free at the nodes
	and has positive degrees over all components of the rational bridge in $\cdegn$.
\end{definition}
Let $\QuotCB$ be the moduli space of flat families of stable quotients of rank $N-r$ and degree $d$
over the universal family $\fr{C}\rightarrow\fr{B}$ of expanded degenerations of curves.
It is referred to as the \emph{good degeneration} of Quot schemes in~\cite{Li-Wu}.
We refer the reader to \textsection{4.1} therein for the definition of
stable (admissible) quotients over general expanded degenerations.

Similarly, we define stable relative quotients on $(\fr{C}_\pm,\fr{x}_\pm) \rightarrow \ca{T}$
and $(\fr{x}_-,\fr{C}_\sim,\fr{x}_+)\rightarrow \ca{T}_\sim$
by requiring their restrictions to the fibers to be locally free at the nodes and the markings
$\fr{x}_\pm$ and have positive degrees over all $\bP^1$'s in the rational tails or chains.
Let $\QuotCpm$ and $\QuotCt$
denote the moduli spaces of flat families of stable relative quotients
over the corresponding universal families of expanded degenerations.
According to~\cite[Theorem 4.14, 4.15]{Li-Wu}, all these stacks, including $\QuotCB$
are separate, Deligne--Mumford stacks of finite type;
the first three are proper, whereas
$\QuotCB$ is proper over $B$.
For simplicity, we set
$\ca{M}^d:= \QuotCB$,
$\ca{M}^d_\pm: = \fr{Quot}_{d}(\fr{C}_\pm/\ca{T},N,r)$,
and $\ca{M}^d_\sim:=\QuotCt$.

Now we identify $\ca{M}^d_\pm$ and $\ca{M}^d_\sim$ with the quasimap moduli spaces introduced
in Section~\ref{sec:Light-to-heavy wall crossing}.
By Remark~\ref{rem:bases}, we have $\ca{T}_\sim \cong \fr{M}_{0,2}^{ss}$.
Note that both $\ca{M}^d_\sim$ and the moduli space $Q^{\epsilon=0+}_{0,2}(X,d)$ of
$(\epsilon=0+)$-stable quasimaps, referred to as stable quotients in~\cite{MOP},
are defined over $\ca{T}_\sim$ with matching stability conditions
(see Definition~\ref{def:stable-quotient} and \cite[\textsection{2}]{MOP}).
Hence, we have the isomorphism over $\ca{T}_\sim$:
\begin{equation}
	\label{eq:rational-bridge}
	\ca{M}^d_\sim	\cong
Q^{\epsilon=0+}_{0,2}(X,d).
\end{equation}

Let $C$ be a smooth projective curve of genus $g$.
Let $\fr{M}_{g,1}(C,1)$ denote the stack of maps of degree 1 from 1-pointed prestable curves
of genus $g$ to $C$.
In other words, $\fr{M}_{g,1}(C,1)$ is the moduli stack of genus $g$, 1-pointed
prestable curves with one parametrized component isomorphic to $C$.
We choose a closed point $c\in C$.
Let $\ca{T}_{\rm maps}(C,c)\subset\fr{M}_{g,1}(C,1)$ denote the locally closed substack
parametrizing curves that are obtained from the parametrized component $C$ by
gluing at $c$ a chain of rational curves, with the unique marking at the extremal component
(c.f.~\cite[Definition 3.1.2]{ACFW} in the case $C=\bP^1$).
Let $\ca{T}_{naive}(C,c)$ denote the stack of \emph{naive expansions} of the pair $(C,c)$ (see~\cite[Definition 2.1.3]{ACFW}).
Then by using the arguments in the proof of~\cite[Lemma 3.1.3]{ACFW}, one can show that
\[
\ca{T}_{naive}(C,c)\cong \ca{T}_{\rm maps}(C,c).	
\]
It follows from Lemma 3.2.2 and Proposition 3.2.3 in~\cite{ACFW} that
\[
\ca{T}	\cong \ca{T}_{\rm maps}(C,c).
\]
Let $Q^{0+}_{C_\pm,1}(X,d)$ be the moduli stacks of $(0+)$-stable quasimaps with one
parametrized component as defined in Definition~\ref{def:stability-quasimaps}.
Consider the forgetful morphism $Q^{0+}_{C_\pm,1}(X,d)\rightarrow \ca{T}_{\rm maps}(C_\pm,x_\pm)$
forgetting the stable quotients.
As a consequence of Definition~\ref{def:stable-quotient}, we have an isomorphism over $\ca{T}$:
\begin{equation*}
	\ca{M}_\pm^d \cong Q^{0+}_{C_\pm,1}(X,d).
\end{equation*}

For later applications, we fix
a point $y_-\in C_-$ (resp. $y_+\in C_+$) away from $x_-$ (resp. $x_+$).
Let $Q^{0+,0+}_{C_\pm,1|1}(X,d_\pm)$ be the moduli stacks of $(0+,0+)$-stable quasimap with one parametrized component
and $(1,1)$-weighted markings, as defined in Definition~\ref{def6.7}.
Here we assume the light marking is at $y_\pm$ and the heavy marking is contracted to
$x_\pm$ via the parametrization map from source curves to $C_\pm$.
Since the point $y_\pm$ is fixed,
adding a light point on the parametrized component at $y_\pm$ does not change the moduli space. Hence, we have:
\begin{equation}
	\label{eq:pm_moduli-new}
	\ca{M}_\pm^{d} \cong Q^{0+,0+}_{C_\pm,1 | 1}(X,d).
\end{equation}

\subsection{Decomposition of the central fiber}
Let $\fr{C}_0,\fr{B}_0$ and $\mdo$ denote the fibers of
$\fr{C},\fr{B}$ and $\md$ over $0\in B$, respectively.
In this section, we describe the structure of the central fiber $\mdo$.

Set
\[
	\fr{B}_0[n]:=\ca{T}\times \underbrace{\ca{T}_\sim\times\dots\times \ca{T}_\sim}_\text{$n$ copies}\times \ca{T}	.
\]
Then there is a natural map
\[
\mathrm{gl}_n: 
\fr{B}_0[n]
\rightarrow \fr{B}_0
\]
induced by gluing the expanded relative pairs of $(C_\pm,x_\pm)$ and
non-rigid expanded degenerations associated to the node.
By Remark 4.7 and the proof of Theorem 4.9 in~\cite{Qu} (see also~\cite[Proposition 2.19]{Li-Wu} and~\cite[Lemma 3]{Lee}),
$\fr{B}_0\subset \fr{B}$ is a complete intersection substack, $\fr{B}_0[n]$ is the normalization of the codimension-$n$
strata in $\fr{B}_0$, and we have
\begin{equation}
	\label{eq:normalization-structure-sheaf}
\sum_{n=0}^\infty (-1)^n (\mathrm{gl}_n)_* \O_{\fr{B}_0[n]}
=\O_{\fr{B}_0}.	
\end{equation}

There are evaluation maps
\[
\ev_\pm: \mdpm \rightarrow X
\quad
\text{and}
\quad
(\ev_-^\sim,\ev_+^\sim):
\mdt\rightarrow X\times X	
\]
defined by the restrictions of the universal quotients to $\fr{x}_\pm$.
Given $d,n\geq 0$, we define 
\[
\Lambda_d^n	=\Big\{
	\underline{d} = (d_-,d_1,\dots, d_n,d_+):
	d_-+d_++\sum_{i=1}^n d_i = d, d_\pm\geq 0, d_i>0
\Big\}.
\]
Given $\underline{d}\in \Lambda_d^n$, we set
\[
	\ca{M}^{\underline{d}}_0[n]
	:=
	\ca{M}^{d_-}_-\times_X \ca{M}^{d_1}_\sim \times_X
	\dots \ca{M}^{d_n}_\sim \times_X\ca{M}^{d_+}_+.
\]
There is a gluing map $\iota_{\ud}:	\ca{M}^{\underline{d}}_0[n]
\rightarrow \ca{M}_0^d$ of stable quotients, which fits into the following
cartesian diagram
\begin{equation}
	\label{eq:gluing-diagram}
	\begin{tikzcd}
		\coprod\limits_{ \ud\in\Lambda_d^n  }\ca{M}_0^{\underline{d}}[n] \arrow[r] \arrow[d] & \ca{M}_0^d \arrow[d] \\
	\fr{B}_0[n] \arrow[r, "\mathrm{gl}_n"] & \fr{B}_0
	\end{tikzcd}
\end{equation}
For $n\geq 0$, we think of $\coprod_{ \ud\in\Lambda_d^n}\ca{M}_0^{\underline{d}}[n]$
as the normalization of the codimension-$n$ strata in $\ca{M}_0^d$.

\subsection{Decomposition of the virtual structure sheaf}	
Let $\ca{S}_{\ca{M}_0^d}$ and $\ca{Q}_{\ca{M}_0^d}$ be the universal subbundle and quotient sheaf
over the universal curve $\pi_d:\ca{M}_0^d\times_{\fr{B}_0}\fr{C}_0 \rightarrow\ca{M}_0^d$,
whose pullbacks to $\ca{M}_0^{\ud}[n]$ via $\iota_{\ud}$ are denoted by
$\ca{S}_{\ca{M}_0^{\ud}[n]}$ and $\ca{Q}_{\ca{M}_0^{\ud}[n]}$, respectively.
We define the relative POT of $\ca{M}_0^{\ud}[n]\rightarrow\fr{B}_0[n]$
by pulling back the relative POT
$
(
R\shom_{\pi_{d}}(\ca{S}_{\ca{M}_0^d},\ca{Q}_{\ca{M}_0^d})
)^\vee
$
of $\ca{M}_0^d\rightarrow \fr{B}_0$ via $\iota_{\ud}$:
\[
	\left(R\shom_{\pi_{\ud }}(\ca{S}_{\ca{M}_0^{\ud}[n]},\ca{Q}_{\ca{M}_0^{\ud}[n]})
	\right)^\vee,
\]
where $\pi_{\ud}:\fr{C}_0[n]\rightarrow \ca{M}_0^{\ud}[n]$ is the universal curve.
These two relative POTs induce virtual pullbacks for the two vertical maps in~\eqref{eq:gluing-diagram},
and we define the virtual structure sheaves $\ovir_{\ca{M}_0^d}$ and $\ovir_{\ca{M}_0^{\ud}[n]}$
as the virtual pullbacks of $\O_{\fr{B}_0}$ and $\O_{\fr{B}_0[n]}$, respectively.
According to~\cite[Theorem 2.6]{Qu}, virtual pullbacks commute with Gysin pullbacks. Hence, we have
\begin{equation}
	\label{eq:ovir-pullback}
	(\mathrm{gl}_n)^!
	\ovir_{\ca{M}_0^d}
	=	\ovir_{\ca{M}_0^{\ud}[n]}.
\end{equation}
If we consider the standard $T=(\C^*)^N$-action on $\ca{M}_0^d$ and $\ca{M}_0^{\ud}[n]$,
then the above equation also holds in the $T$-equivariant $K$-group.

\begin{theorem}
	\label{thm:virtual-normalization}
	We have 
	\begin{equation*}
		\sum_{n\geq 0 }	
	\sum_{\underline{d}\in \Lambda_d^n}
	(-1)^n (\iota_{\ud})_*\ovir_{	\ca{M}^{\underline{d}}_0[n]}
	=\ovir_{\ca{M}_0^d}
	\end{equation*}
 in $\kG(\ca{M}_0^d)$.
	\end{theorem}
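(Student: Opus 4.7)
The plan is to apply the virtual pullback $\nu^! \colon K_0(\fr{B}_0) \to \kG(\mdo)$ associated to the relative POT of $\nu \colon \mdo \to \fr{B}_0$ to both sides of the structure-sheaf identity~\eqref{eq:normalization-structure-sheaf}. By definition, $\nu^!(\O_{\fr{B}_0}) = \ovir_{\mdo}$, so applying $\nu^!$ to the right-hand side of~\eqref{eq:normalization-structure-sheaf} produces exactly $\ovir_{\mdo}$. It then remains to show that the resulting left-hand side $\sum_{n\geq 0}(-1)^n\nu^!(\mathrm{gl}_n)_*\O_{\fr{B}_0[n]}$ equals $\sum_{n,\ud}(-1)^n (\iota_{\ud})_*\ovir_{\ca{M}_0^{\ud}[n]}$.

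For each fixed $n$, I would exchange $\nu^!$ past the proper pushforward $(\mathrm{gl}_n)_*$ by invoking the base-change property of virtual pullbacks along the Cartesian square~\eqref{eq:gluing-diagram}. Writing $\iota_n = \coprod_{\ud\in\Lambda_d^n}\iota_{\ud}$ and $\tilde\nu \colon \coprod_{\ud}\ca{M}_0^{\ud}[n]\to \fr{B}_0[n]$ for the base change of $\nu$, equipped with the pulled-back relative POT, this yields $\nu^!(\mathrm{gl}_n)_* = (\iota_n)_*\,\tilde\nu^!$. By construction $\tilde\nu^!\O_{\fr{B}_0[n]}$ is precisely the virtual structure sheaf on the fiber product, which decomposes as $\sum_{\ud\in\Lambda_d^n}\ovir_{\ca{M}_0^{\ud}[n]}$ over the connected components indexed by $\Lambda_d^n$. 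Combining these two reductions produces the asserted identity. Equivariance is immediate since the standard $T=(\C^*)^N$-action is trivial on $\fr{B}$ and all maps in~\eqref{eq:gluing-diagram} are $T$-equivariant.

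The main technical obstacle is establishing the base-change formula $\nu^!(\mathrm{gl}_n)_* = (\iota_n)_*\tilde\nu^!$ in the $K$-theoretic setting. The gluing maps $\mathrm{gl}_n$ are regular embeddings (locally they are inclusions of codimension-$n$ strata of smooth stacks, cf.~\cite[Remark 4.7]{Qu}), so they admit refined Gysin pullbacks $\mathrm{gl}_n^!$. The compatibility of these Gysin pullbacks with the virtual pullback $\nu^!$, already exploited in the excerpt via~\eqref{eq:ovir-pullback} from~\cite[Theorem 2.6]{Qu}, is exactly the ingredient needed to convert the classical proper base change for pushforward into the required virtual base change; this is the same device used in~\cite[\textsection{4}]{Qu} for $K$-theoretic DT invariants. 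Finally, the sum over $n$ is effectively finite since $\Lambda_d^n$ is empty for $n>d$ (because each $d_i$ in an element of $\Lambda_d^n$ is strictly positive), so no convergence issue arises in $\kG(\mdo)$.
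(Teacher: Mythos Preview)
Your proposal is correct and follows essentially the same approach as the paper: apply the virtual pullback $\nu^!$ to the structure-sheaf identity~\eqref{eq:normalization-structure-sheaf} and commute it past $(\mathrm{gl}_n)_*$ via the Cartesian square~\eqref{eq:gluing-diagram}. The only minor difference is that the base-change property you describe is precisely the commutativity of virtual pullbacks with proper pushforwards stated in~\cite[Proposition 2.4]{Qu}, which the paper invokes directly; your detour through the regular-embedding structure of $\mathrm{gl}_n$ and~\cite[Theorem 2.6]{Qu} is unnecessary here, since Proposition~2.4 already gives $\nu^!(\mathrm{gl}_n)_* = (\iota_n)_*\tilde\nu^!$ without any hypothesis on $\mathrm{gl}_n$ beyond properness.
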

	\begin{proof}
		The theorem follows~\eqref{eq:normalization-structure-sheaf},~\eqref{eq:ovir-pullback}, and the commutativity
		between virtual pullbacks and proper pushforwards~\cite[Proposition 2.4]{Qu}.
	\end{proof}

To split invariants over $\ca{M}^{\underline{d}}_0[n]$, we first define the relative POTs for $\ca{M}_\pm^{d_\pm}\rightarrow \ca{T}$ and $\ca{M}_\sim^{d_i}\rightarrow \ca{T}_\sim$
using~\eqref{eq:alternate-POT} (or equivalently~\eqref{eq:POT}).
Note that the ($T$-equivariant) isomorphisms~\eqref{eq:rational-bridge} and~\eqref{eq:pm_moduli-new}
preserve these relative POTs. Let $\ovir_{\ca{M}^{d_\pm}_\pm}$ and $\ovir_{\ca{M}^{d_i}_\sim}$
be the virtual pullbacks of the structure sheaves of $\ca{T}$ and $\ca{T}_\sim$, respectively.
Consider the cartesian diagram:
\begin{equation*}
	\begin{tikzcd}
\ca{M}_0^{\underline{d}}[n] \arrow[r] \arrow[d] & 
\ca{M}^{d_-}_-\times\ca{M}^{d_1}_\sim \times
\cdots \ca{M}^{d_n}_\sim\times\ca{M}^{d_+}_+
\arrow[d] \\
\fr{B}_0[n] \times X^{\times n} \arrow[r, "\Delta_X^{\times n}"] & 
\fr{B}_0[n] \times (X\times X)^{\times n}
	\end{tikzcd}.	
\end{equation*}
Let $\fr{C}'_0[n]$ denote the pullback of the univeral curve over $\ca{M}^{d_-}_-\times\ca{M}^{d_1}_\sim \times\cdots \ca{M}^{d_n}_\sim\times\ca{M}^{d_+}_+$
to $\ca{M}_0^{\underline{d}}[n] $. Then we have a commutative diagram
\begin{equation*}
	\begin{tikzcd}
		\fr{C}'_0[n]		\arrow[r,"p"] \arrow[dr,"f'"'] & 
		\fr{C}_0[n]
\arrow[d,"f"] \\
& 
\fr{X}
	\end{tikzcd}	
\end{equation*}
where $f$ and $f'$ are the universal maps and $p$ is the gluing map.
By the stability condition of the quotients, $Lf^*\bb{T}_{\fr{X}}=\shom(\ca{S}_{\ca{M}_0^{\ud}[n]},\ca{Q}_{\ca{M}_0^{\ud}[n]})$
is locally free at the nodes of the universal curve.
It follows from the arguments in the proof of the cutting-edges axiom in~\cite{Behrend} that
the relative POTs of $\ca{M}_0^{\underline{d}}[n]$ and $\ca{M}^{d_-}_-\times\ca{M}^{d_1}_\sim \times\cdots \ca{M}^{d_n}\times\ca{M}^{d_+}_+$ are compatible with respect to $\Delta_X^{\times n}$
(see~\cite{Behrend-Fantechi}).
Then by the functoriality of virtual pullbacks~\cite[Proposition 2.11]{Qu}, we have
\begin{equation}
	\label{eq:split-virtual-structure-sheaf}
	\ovir_{\ca{M}_0^{\underline{d}}[n]} = \left(\Delta_X^{\times n}\right)^! 
	\left(
		\ovir_{\ca{M}^{d_-}_-}\boxtimes\ovir_{\ca{M}^{d_1}_\sim} \boxtimes
\dots \ovir_{\ca{M}^{d_n}_\sim}\boxtimes\ovir_{\ca{M}^{d_+}_+}
	\right)
\end{equation}
in $\kG(\ca{M}_0^{\underline{d}}[n])$.

\subsection{Degeneration formula}

We choose a point $y_-\in C_-$ (resp. $y_+\in C_+$) away from $x_-$ (resp. $x_+$).
After shrinking $B$ if necessary, we extend these two markings to sections $\fr{y}_\pm:\fr{B}\rightarrow \fr{C}$
of the expanded degeneration of curves.

Consider the `stacky' evaluation map
\[
	\wev_{y_\pm}: \ca{M}_\pm^{d_\pm}\rightarrow \fr{X}
\]
induced by the restriction of the universal subbundles at $y_\pm$.
By abuse of notation, we also use $\wev_{y_\pm}$ to denote the evaluation maps on $\ca{M}_0^{\underline{d}}[n]$ and $\ca{M}^d_0$.
For $\mathrm{V},\mathrm{W}\in \kg(\fX)$, we define
\[
	\langle\!\langle
	\mathrm{V},
	\mathrm{W}
	\rangle\!\rangle^{T}_{\deg}
	:=\sum_{d\geq 0} q^d\chi^{\vir, T}
	\left(
		\ca{M}_0^{d},	
	\wev_{y_-}^{*}
	\left(
	\mathrm{V}
	\right)
	\cdot
	\wev_{y_+}^{*}
	\left(
	\mathrm{W}
	\right)
	\right).
	\]
	It follows from Theorem~\ref{thm:virtual-normalization} that
\begin{align*}
	\langle\!\langle
	\mathrm{V},
	\mathrm{W}
	\rangle\!\rangle^{T}_{\deg}
	=\sum_{n\geq 0 } (-1)^n
	\sum_{d\geq 0,\ud\in \Lambda_d^n }q^d
	\chi^{\vir, T}
	\left(
		\ca{M}_0^{\underline{d}}[n],	
	\wev_{y_-}^{*}
	\left(
	\mathrm{V}
	\right)
	\cdot
	\wev_{y_+}^{*}
	\left(
	\mathrm{W}
	\right)
	\right).
\end{align*}
Define $F^{\epsilon=0+}_{\alpha,\beta} = g_{\alpha,\beta}+F_{\alpha,\beta}'$, where
$g_{\alpha,\beta}:=\chi^T(X,\O_\alpha\cdot\O_\beta)$ and
\[
	F_{\alpha,\beta}'=	\sum_{d> 0} q^{d}
	\langle
		\O_\alpha,\O_\beta
	\rangle_{0,2,d}^{\epsilon=0+,T}.
\]
Using~\eqref{eq:split-virtual-structure-sheaf} and the isomorphisms~\eqref{eq:rational-bridge},~\eqref{eq:pm_moduli-new}, we obtain
\begin{multline*}
	\sum_{d\geq 0,\ud\in \Lambda_d^n }q^d
	\chi^{\vir, T}
	\left(
		\ca{M}_0^{\underline{d}}[n],	
	\wev_{y_-}^{*}
	\left(
	\mathrm{V}
	\right)
	\cdot
	\wev_{y_+}^{*}
	\left(
	\mathrm{W}
	\right)
	\right)
=	
\sum_{\alpha,\beta,a_i,b_i \in \P_{r,k}}
\langle\!\langle
\mathrm{V}
,
\O_\alpha
\rangle\!\rangle_{C_-,1| 1}^{0+,T}
\\
\cdot
g^{\alpha,a_1}
F_{a_1,b_1}'
g^{b_1,a_2}
\cdots
F_{a_{m},b_{m}}'
g^{b_{m},\beta}
\cdot
\langle\!\langle
	\O_\beta,
	\mathrm{W}
	\rangle\!\rangle_{C_+,1| 1}^{0+,T}
.
\end{multline*}
Here $g^{\alpha,\beta}$ denote the coefficients of the inverse of $(g_{\alpha,\beta})$.
It was first observed in~\cite{WDVV} (in quantum $K$-theory) that the combinatorics involved in the above equation
can be simplified using the inverse matrix
$F_{\epsilon=0+}^{\alpha,\beta}$, which equals to
\[
	g^{\alpha,\beta}+\sum_{m \geq 1, a_i,b_i\in \P_{r,k}}(-1)^m
	g^{\alpha,a_1}
F_{a_1,b_1}'
g^{b_1,a_2}
\cdots
F_{a_{m},b_{m}}'
g^{b_{m},\beta}.
\]
In sum, we obtain
\begin{equation*}
\langle\!\langle
	\mathrm{V},
	\mathrm{W}
	\rangle\!\rangle^{T}_{\deg}
=
\sum_{\alpha,\beta\in \P_{r,k}}
\langle\!\langle
\mathrm{V}
,
\O_\alpha
\rangle\!\rangle_{C_-,1| 1}^{0+,T}
\cdot
F_{\epsilon=0+}^{\alpha,\beta}
\cdot
\langle\!\langle
	\O_\beta,
	\mathrm{W}
	\rangle\!\rangle_{C_+,1| 1}^{0+,T}
.
\end{equation*} 
Hence by using the wall-crossing results from Proposition~\ref{prop:wall-crossing} and Theorem~\ref{prop:epsilon-wall-crossing}, we have 
\begin{equation}
	\label{eq:factorization}
	\langle\!\langle
		\mathrm{V},
		\mathrm{W}
		\rangle\!\rangle^{T}_{\deg}
	=
	\sum_{\alpha,\beta\in \P_{r,k}}
	\langle\!\langle
	\mathrm{V}
	,
	\rO_\alpha
	\rangle\!\rangle_{g_-}^{\quot,T}
	\cdot
	F^{\alpha,\beta}
	\cdot
	\langle\!\langle
		\rO_\beta,
		\mathrm{W}
		\rangle\!\rangle_{g_+}^{\quot,T}
		.
	\end{equation}

Let $q_{\fr{B}}:\ca{M}^d\rightarrow \fr{B}$ and
$q_B:\ca{M}^d\ra B$ be the good degeneration of Quot schemes.
Let $i_b:b\hookrightarrow C$ be the regular embedding of a closed point.
Consider the cartesian diagram
\[
	\begin{tikzcd}
	\ca{M}^d_b \arrow[r] \arrow[d, "\iota_b"] & \fr{B}_b \arrow[r] \arrow[d] & b \arrow[d,"i_b"]\\
	\ca{M}^d \arrow[r,"q_{\fr{B}}"] & \fr{B} \arrow[r] & B
	\end{tikzcd}
	\]
We define the virtual structure sheaf of the good degeneration by
\[
	\ovir_{\ca{M}^d}: =q^!(\O_{\fr{B}})\in \kG(\ca{M}^d)	.
\]
Since the POT of $\ca{M}^d_b $ is the pullback of the relative POT of $q$,
we have
\[
i_b^!(\ovir_{\ca{M}^d}) = \ovir_{\ca{M}^d_b }.
\]
Let $F\in \kg(\ca{M}^d)$. Then
\begin{align*}
\chi^{\vir,T}\left(\ca{M}^d_b, F |_{\ca{M}^d_b}\right) 
=	
\chi^T\left(\ca{M}^d,F\cdot (\iota_b)_*i_b^!(\ovir_{\ca{M}^d} )\right)
=
\chi^T\left(B,i_b^!(q_B)_*(F\cdot \ovir_{\ca{M}^d} )\right)
\end{align*}
is independent of $b$.
In particular, if we fix a point $b\neq 0$ in $B$, then
\begin{equation}
	\label{eq:deformation-invariance}
	\chi^{\vir,T}\left(\ca{M}^d_b, F |_{\ca{M}^d_b}\right) 
	=\chi^{\vir,T}\left(\ca{M}^d_0, F |_{\ca{M}^d_0}\right). 
\end{equation}

\begin{proof}[Proof of Theorem~\ref{thm:intro_degeneration_formula} (i)]
	Let $C=\fr{C}_b$ be the fiber of $\fr{C}\rightarrow B$ at $b\neq 0$.
	Then $C$ is a smooth curve of genus $g$, and we have 
	\[
		\ca{M}^d_b\cong \QuotC.
	\]
	Set $F=\wev_{\fr{y}_-}^{*}
	\left(
	\mathrm{V}
	\right)
	\cdot
	\wev_{\fr{y}_+}^{*}
	\left(
	\mathrm{W}
	\right),
	$
	where $\wev_{\fr{y}_\pm}:\ca{M}^d\rightarrow \fr{X}$ denote the
	stacky evaluation maps at $\fr{y}_\pm$.
	Then the degeneration formula~\eqref{eq:degeneration-1} follows from the
	decomposition~\eqref{eq:factorization}
	and the deformation invariance of virtual Euler characteristics~\eqref{eq:deformation-invariance}.
\end{proof}
\begin{rem}
	The degeneration formula~\eqref{eq:degeneration-2} in Theorem~\ref{thm:intro_degeneration_formula}
	(ii) can be proven similarly by analyzing a simple degeneration
	$\ca{C}'\rightarrow B$ over a smooth pointed curve $(B,0)$.
	Here, the fibers outside 0 are smooth curves of genus $g$,
	while the fiber at 0, $C'_{\deg}$, is an irreducible curve with one node.
	The associated stack of expanded degenerations,
	$\fr{C}'\rightarrow \fr{B}$, has central fibers formed by
	inserting a chain of $\bP^1$s (of various lengths) at the unique node of $C'_{\deg}$ (see~\cite[\textsection{2.2}]{Li-Wu}).
	The good degeneration of Quot schemes, $\QuotCPB\ra B$, parametrizes flat families of
	stable quotients of rank $N-r$ and degree $d$ over the universal family
	$\fr{C}'\rightarrow \fr{B}$.

	Let $\tilde{C}$ be the normalization of $C'_{\deg}$, with $x_-$ and $x_+$ in $\tilde{C}$ representing the preimages of the unique node in $C'_{\deg}$.
	The central fiber of this degeneration of Quot schemes is `virtually' a complete intersection
	substack, and its `normalization' is
	\begin{equation}
		\label{eq:normalization-central-fiber}
\coprod\limits_{ n,\ud\in\tilde{\Lambda}_d^n }
		\tilde{\ca{M}}_0^{d_0}
		\times_X \ca{M}^{d_1}_\sim \times_X
		\dots \ca{M}^{d_n}_\sim 
		\times_X
		\big(\tilde{\ca{M}}_0^{d_0}\big),
	\end{equation}
	where $\tilde{\Lambda}_d^n	=\{
		\underline{d} = (d_0,d_1,\dots, d_n):
		d_0+\sum_{i=1}^n d_i = d\ \text{and}\ d_i>0\ \text{for}\ 1 \leq i\leq n\}$,
	\[
		\tilde{\ca{M}}_0^{d_0}\cong Q^{0+,0+}_{\tilde{C}, 1 | 2}(X,d_0),
	\]
	and we put the further right $\tilde{\ca{M}}_0^{d_0}$ in parenthesis
	to indicate is is the same as the one on the left.
	Here, a source curve in $Q^{0+,0+}_{\tilde{C}, 2 | 2}(X,d_0)$ is formed
	by attaching rational tails to $\tilde{C}$ at $x_\pm$ and
	placing heavy markings on the outermost components of these rational tails.
	The inclusion of one light marking on the parametrized component $\tilde{C}$
	does not alter the moduli space but allows us to define invariants with one insertion in $\kg(\fX)$.
	Similar to the proof of Theorem~\ref{thm:intro_degeneration_formula} (i),
	we apply the deformation invariance of virtual Euler characteristics to relate $\langle\!\langle \mathrm{V} \rangle\!\rangle^{\quot,T}_{g}$
	to the alternating sum of virtual Euler characteristics over the components in~\eqref{eq:normalization-central-fiber}.
	This sum can then be rewritten as the right side of the degeneration formula~\eqref{eq:degeneration-2}
	using the wall-crossing formulas in Proposition~\ref{prop:wall-crossing} and Theorem~\ref{prop:epsilon-wall-crossing}.	
\end{rem}
	

\bibliographystyle{alphnum}
\bibliography{ref1}

\end{document}